\documentclass[11pt]{article}
\usepackage[a4paper,margin=22mm]{geometry}
\usepackage{amsmath,amssymb,amsthm}
\usepackage{enumitem}
\usepackage{tikz}
\usetikzlibrary{arrows.meta,positioning,shapes.geometric,decorations.pathreplacing,calc,fit,backgrounds}
\usepackage[unicode,colorlinks=true,linkcolor=blue!60!black,urlcolor=blue!60!black,citecolor=blue!60!black]{hyperref}

\newtheorem{theorem}{Theorem}[section]
\newtheorem{lemma}[theorem]{Lemma}

\newtheorem{claim}{Claim}
\newtheorem{conjecture}{Conjecture}
\theoremstyle{definition}
\newtheorem{definition}[theorem]{Definition}
\theoremstyle{remark}

\def\eps{\varepsilon}

\tikzset{
  vtx/.style={circle,fill=black,inner sep=1.3pt},
  bigvtx/.style={circle,fill=black,inner sep=1.8pt},
  lowvtx/.style={circle,fill=red!70!black,inner sep=1.5pt},
  upvtx/.style={circle,fill=blue!70!black,inner sep=1.5pt},
  box/.style={draw,rounded corners=3pt,align=center,inner sep=5pt,font=\small},
  arr/.style={-{Stealth[length=2.2mm]},thick},
}

\title{A proof of the Erd\H{o}s--Gallai cycle decomposition conjecture}
\author{Jaehoon Kim\thanks{Department of Mathematical Sciences, KAIST, South Korea. E-mail: \href{mailto:jaehoon.kim@kaist.ac.kr}{\texttt{jaehoon.kim@kaist.ac.kr}}. Supported by the National Research Foundation of Korea (NRF) grant funded by the Korea government (MSIT) No.~RS-2023-00210430.}}
\date{}

\begin{document}
\maketitle

\begin{abstract}
In the 1960s, Erd\H{o}s and Gallai conjectured that the edges of every $n$-vertex graph can be decomposed into $O(n)$ cycles and edges. We prove this conjecture. Equivalently, every Eulerian graph on $n$ vertices can be decomposed into $O(n)$ cycles, which confirms Haj\'os' conjecture up to a constant factor.
\end{abstract}

\section{Introduction}\label{sec:intro}

When can the edges of a graph be decomposed into cycles? As Veblen~\cite{veblen} observed, extending Euler's theorem, this is possible if and only if all degrees of the graph are even. We call such a graph \emph{Eulerian}. It is natural to ask next how many cycles are needed. Haj\'os conjectured that every Eulerian graph on $n$ vertices can be decomposed into at most $\lfloor (n-1)/2\rfloor$ cycles (see~\cite{lovasz}). An arbitrary graph need not have a cycle decomposition, and then one also allows single edges. In the 1960s, Erd\H{o}s and Gallai conjectured that a linear number of cycles and edges always suffices (see~\cite{egp}).

\begin{conjecture}[Erd\H{o}s and Gallai]\label{conj:eg}
The edges of every $n$-vertex graph can be decomposed into $O(n)$ cycles and edges.
\end{conjecture}

Every graph can be decomposed into an Eulerian graph and a forest. Thus Conjecture~\ref{conj:eg} is equivalent to the statement that every Eulerian graph on $n$ vertices can be decomposed into $O(n)$ cycles. A linear bound is best possible up to the constant factor. Erd\H{o}s~\cite{erdos83} observed that $(3/2-o(1))n$ cycles and edges are needed for some graphs, generalizing an example of Gallai. Erd\H{o}s included the conjecture in several of his collections of open problems, for example in~\cite{erdos81}.

Related problems for paths and for coverings have also been studied. Lov\'asz~\cite{lovasz} proved that every graph on $n$ vertices can be decomposed into at most $n/2$ paths and cycles. Gallai conjectured that every connected graph on $n$ vertices can be decomposed into at most $\lceil n/2\rceil$ paths (see~\cite{lovasz}). Pyber~\cite{pyber} proved that the edges of every $n$-vertex graph can be covered by $n-1$ cycles and edges, and Fan~\cite{fan03} proved that the edges of every Eulerian graph on $n$ vertices can be covered by $\lfloor (n-1)/2\rfloor$ cycles. These are the covering versions of Conjecture~\ref{conj:eg} and of Haj\'os' conjecture, respectively.

The decomposition problem itself was solved for two large classes of graphs. Conlon, Fox and Sudakov~\cite{cfs} proved Conjecture~\ref{conj:eg} for graphs of linear minimum degree and for random graphs. For graphs of linear minimum degree, Gir\~ao, Granet, K\"uhn and Osthus~\cite{ggko} proved the asymptotically optimal bound $(3/2+o(1))n$. For random graphs with a wide range of edge probabilities, Kor\'andi, Krivelevich and Sudakov~\cite{kks} found the asymptotically optimal number of cycles and edges (see also Glock, K\"uhn and Osthus~\cite{gko}).

For general graphs, Erd\H{o}s and Gallai observed that repeatedly removing a longest cycle gives a decomposition into $O(n\log n)$ cycles and edges. This was the best known bound for almost fifty years. Conlon, Fox and Sudakov~\cite{cfs} improved it to $O(n\log\log n)$, and Buci\'c and Montgomery~\cite{bm} improved it to $O(n\log^* n)$. In this paper, we prove Conjecture~\ref{conj:eg}.

\begin{theorem}\label{thm:eg}
The edges of every $n$-vertex graph can be decomposed into $O(n)$ cycles and edges.
\end{theorem}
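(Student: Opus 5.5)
We reduce first to Eulerian graphs. Every graph $G$ splits into an Eulerian subgraph and a forest, and the forest has fewer than $n$ edges, so it suffices to show that every $n$-vertex Eulerian graph $G$ decomposes into $Cn$ cycles for an absolute constant $C$.

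The plan is an iterative \emph{density-increment/sparsification} scheme. The idea behind the $O(n\log^* n)$ bound of Buci\'c and Montgomery is that in a graph of average degree $d$ one can peel off cycles covering most edges using roughly $n$ cycles per "level", after which the remaining graph has average degree about $\log d$. Iterating this costs $O(n)$ per level and $\log^* n$ levels. To reach $O(n)$, we will instead make the cost of level $i$ decay geometrically. Concretely, let $G_0=G$, and suppose that $G_i$ has $n_i$ non-isolated vertices and average degree $d_i$. The goal is to remove $O(n_i)$ cycles, or better $O(n/2^i)$ cycles, so that the remainder $G_{i+1}$ is Eulerian with either $n_{i+1}\le n_i/2$ or $d_{i+1}$ bounded by an absolute constant. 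Once the degree is bounded by a constant $K$, the graph has at most $Kn/2$ edges, so trivially $O(n)$ cycles suffice.

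The key step is a lemma of the following form. In an Eulerian graph $H$ with $m$ vertices and average degree $d\ge K$, we want $O(m)$ edge-disjoint cycles whose removal leaves an Eulerian graph in which at least half of the vertices become isolated. To prove it, we will pass to a subgraph which is a robust sublinear expander, using the Koml\'os--Szemer\'edi style expanders used by Buci\'c--Montgomery. We then split vertices into "low" and "high" degree classes. For each low-degree vertex $v$ (degree at most about $2d$), we pair up its incident edges $uv,vw$ and close each pair into a cycle through a short path $u\to w$ avoiding previously used edges. The expander gives paths of length $\mathrm{polylog}$, and a random or greedy choice of connections keeps the used edges spread out. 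Each such cycle kills two edges at $v$. Grouping many pairs into one long cycle via linking through the expander, one cycle can clear $\Omega(d)$ edges at once, so $v$ costs $O(1)$ cycles. The high-degree vertices, at most $m/2$ of them, remain for the next round.

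The main obstacle is controlling the damage from the connecting paths. These paths use edges at other vertices, and the count must stay linear rather than picking up a logarithmic factor. We will handle this by two measures. First, we route connecting paths only through a reserved random sparse subgraph $R$ of bounded maximum degree relative to $d$. Second, we absorb the leftover parity fixes inside $R$, which is itself decomposed at the next level. Proving that a single cycle can absorb $\Theta(d)$ prescribed edge pairs, with total length much less than $d\cdot\mathrm{polylog}$ usage per vertex, is the delicate expansion argument. We would first try linking the pairs into one cycle with an absorbing/linkage structure inside $R$.
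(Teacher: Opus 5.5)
Your outer bookkeeping is sound: an Eulerian part plus a forest, and halving the number of non-isolated vertices at cost $O(n_i)$ per level sums to $O(n)$. The problem is that the halving lemma carries the entire difficulty. It is equivalent to the theorem: the theorem gives it at once, and iterating it gives the theorem. The strategy you outline for it clears every vertex of degree at most $2d$, which for a $d$-regular graph amounts to a complete decomposition into $O(m)$ cycles.

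Since the lemma must hold for every $d\ge K$, you cannot avoid average degree below a suitable power of $\log m$, and there each step of your sketch fails.
\begin{itemize}
\item The partition of Theorem~\ref{thm:bm}\ref{bm:split} into $s$-expanders may discard $4sm\log m$ edges. Once $s$ is polylogarithmic, as routing requires, this is all of them. The discarded edges keep their low-degree ends non-isolated, and Buci\'c and Montgomery identify exactly these leftover edges as the bottleneck of the expander approach.
\item As you describe the linking, every pair $uv,vw$ is joined to the next by its own connecting path through $R$. So $\Theta(md)$ pairs need $\Theta(md)$ paths of polylogarithmic length inside a sparse reservoir with only $o(md)$ edges.
\item Postponing the unused reservoir edges to the next level contradicts the halving itself. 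The reservoir must reach the low-degree vertices, and any of them that keeps an unused reservoir edge is not isolated.
\end{itemize}
Your closing sentences concede that the decisive step is open, and nothing in the proposal supplies it.

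The paper never tries to clear a constant fraction of all vertices with global expanders. It proceeds in three moves.
\begin{itemize}
\item It records at most $n$ heavy cycles in the sense of Definition~\ref{def:light}. These are paid for by the potential $\Phi=\sum_v\bigl(1-(d(v)+1)^{-1/2}\bigr)<n$, which drops by at least one per heavy cycle since $f(d)-f(d-2)\ge d^{-3/2}$.
\item In the remaining light graph of large minimum degree, Theorem~\ref{thm:structure} finds a set $S$ with $\delta(G[S])\ge\lvert S\rvert^{1/1000}$ in which at least $\lvert S\rvert/12$ vertices have at most two neighbors outside $S$.
\item Because the degree is now polynomial in $\lvert S\rvert$, the logarithmic losses become negligible. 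Theorem~\ref{thm:qoh} removes all but $\lvert S\rvert\log^{O(1)}\lvert S\rvert$ edges by Theorem~\ref{thm:bm}\ref{bm:bulk}. It routes the rest through reservoirs of density $\lvert S\rvert^{-\Omega(1)}$ (Lemma~\ref{lem:split}). It decomposes the unused reservoir edges in the same step by Theorem~\ref{thm:cfs}\ref{cfs:dec}.
\end{itemize}
The resulting $O(\lvert S\rvert)$ cycles and edges are charged to the at least $\lvert S\rvert/12$ vertices that are left with degree at most two and are then removed. Your plan is missing two ideas: lightness via the potential, and localization to near-closed subgraphs of polynomial minimum degree.
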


Theorem~\ref{thm:eg} implies Haj\'os' conjecture up to a constant factor. Indeed, consider a decomposition of an Eulerian graph $G$ into $O(n)$ cycles and edges. If we remove the cycles from $G$, then the single edges form an Eulerian graph, since removing a cycle keeps all degrees even. This graph can be decomposed into cycles, and the number of these cycles is at most the number of single edges.

\subsection{Previous bounds and our approach}

All previous general bounds come from iteration. In each round, the current graph is decomposed into $O(n)$ cycles and a leftover graph of smaller average degree, and the next round is applied to the leftover graph. In the approach of Conlon, Fox and Sudakov~\cite{cfs}, the average degree drops from $d$ to $d^{c}$ for a constant $c<1$, and this needs $O(\log\log n)$ rounds. Buci\'c and Montgomery~\cite{bm} split the graph into expanders on $O(d\log^4 d)$ vertices each, whose orders sum to at most $2n$, and they decompose each expander $J$ into $O(\lvert J\rvert)$ cycles and $O(\lvert J\rvert\log^{O(1)}\lvert J\rvert)$ edges~\cite[Theorem~3]{bm}. So the average degree drops from $d$ to a power of $\log d$, and $O(\log^* n)$ rounds suffice. The bound allows $O(n)$ cycles in each round, so the number of rounds appears in the final bound. Buci\'c and Montgomery~\cite[Section~6]{bm} describe the edges left over by their partition into expanders as the main bottleneck of this approach.

Our proof does not proceed in rounds. We follow a single procedure which removes cycles and edges one step at a time, and we bound the number of cycles and edges used in all steps by two quantities which are each at most a constant multiple of $n$. The first quantity is a potential function of the degrees. Let
$$f(d)=1-\frac{1}{\sqrt{d+1}}.$$
The potential $\Phi(G)=\sum_{v}f(d_G(v))$ is less than $\lvert G\rvert$, and $f(d)-f(d-2)\geq d^{-3/2}$ for $d\geq 2$ (see Section~\ref{sec:deduction}). For $1/d$ in place of $d^{-3/2}$, such a function $f$ would grow like $\log d$, and the potential could be of order $n\log n$.

\begin{definition}\label{def:light}
A cycle $C$ of a graph $G$ is \emph{heavy} if there is a real number $b\geq 1$ such that at least $b^{3/2}$ vertices of $C$ have degree at most $b$ in $G$. The graph $G$ is \emph{light} if it has no heavy cycle.
\end{definition}

Removing the edges of a cycle $C$ decreases each degree on $C$ by two. If at least $b^{3/2}$ vertices of $C$ have degree at most $b$, then each of them decreases the potential by at least $b^{-3/2}$, so the potential decreases by at least one. So we can remove heavy cycles of the current graph one by one, and in total we use at most $n$ of them.

The second quantity is the number of vertices. If the current graph has a vertex of small positive degree, then we remove its edges as single edges. Otherwise, if it has no heavy cycle, then we find an induced subgraph $H$ of polynomial minimum degree (Theorem~\ref{thm:structure}) and decompose its edges into $O(\lvert H\rvert)$ cycles and edges (Theorem~\ref{thm:qoh}). After the edges of $H$ are removed, at least $\lvert H\rvert/12$ vertices of $H$ have at most two remaining edges. We remove these vertices together with their remaining edges. So the number of cycles and edges used for $H$ is at most a constant times the number of removed vertices. Each vertex is removed at most once, and this gives a linear bound.

In this way, Conjecture~\ref{conj:eg} reduces to light graphs. The lightness condition is strong, since it bounds, for all $b$ at once, the number of vertices of degree at most $b$ on every cycle. We show that this forces the subgraphs described above. The two main ingredients are the following theorems.

\begin{theorem}\label{thm:structure}
There exists $\delta_0$ such that the following holds for every light graph $G$ with $\delta(G)\geq\delta_0$. There is a nonempty set $S\subseteq V(G)$ such that $H=G[S]$ has minimum degree at least $\lvert S\rvert^{1/1000}$. Moreover, at least $\lvert S\rvert/12$ vertices of $S$ have at most two neighbors outside $S$ in $G$.
\end{theorem}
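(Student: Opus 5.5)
We use the lightness condition mainly through long paths and cycles. A graph of minimum degree $\delta$ contains a cycle of length at least $\delta+1$, and more generally expanders contain long cycles through prescribed low-degree vertices. So if a cycle $C$ meets at least $b^{3/2}$ vertices of degree at most $b$, then $C$ is heavy. In a light graph, therefore, the set $L_b=\{v:d_G(v)\le b\}$ meets every cycle in fewer than $b^{3/2}$ vertices.

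The plan is to choose a scale at which low-degree vertices are numerous but structurally isolated, and then take $S$ to be a union of such vertices together with a dense core. Concretely, we consider the dyadic degree classes $D_i=\{v:2^{i}\le d_G(v)<2^{i+1}\}$, with $2^{i}\ge\delta_0$.

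\textbf{Step 1: lightness forces a cycle-rank bound.} Any graph in which every cycle meets $L_b$ in fewer than $b^{3/2}$ vertices has the following property: for every subgraph $F$ of minimum degree at least $2$, the vertices of $L_b$ in $F$ are arranged in a "bounded circumference" way relative to $F$. We use the standard fact that a graph with no cycle containing $k$ vertices of a set $A$ has a small-treewidth-like structure on $A$. In particular, if $F=G[L_b]$, then $F$ has no cycle of length $b^{3/2}$. Since $F$ has minimum degree not obviously large, we instead look at $G[L_{b}]$ where almost all neighbours of these vertices lie in $L_b$. By Erd\H{o}s--Gallai on circumference, a graph with no cycle of length at least $k$ has average degree below $k$. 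So $e(G[L_b])<b^{3/2}\lvert L_b\rvert$, which is only weak. We need to combine scales: choose $b$ and $B=b^{C}$, and count edges from $L_b$. Every vertex has degree at least $\delta_0$. The number of edges between $L_b$ and $V\setminus L_B$ is controlled, because a vertex of huge degree with many neighbours in $L_b$ lets us build long cycles through $L_b$ by using dense bipartite pieces.

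\textbf{Step 2: find the scale.} Consider the sequence $b_j=\delta_0^{C^j}$. We aim to find $j$ such that, with $S=L_{b_{j+1}}$ or a suitable subset, every vertex of $S$ has at least $\lvert S\rvert^{1/1000}$ neighbours in $S$. The idea is to pick the smallest $j$ for which $\lvert L_{b_{j+1}}\rvert\le b_{j+1}^{K}$ for a large constant $K$; some such $j$ must exist, since degrees are bounded by $n$. Then $\lvert S\rvert$ is polynomial in $b_{j+1}$ while degrees in $S$ are at least $b_j=b_{j+1}^{1/C}$. To get minimum degree inside $S$, we delete iteratively the vertices with fewer than $\lvert S\rvert^{1/1000}$ neighbours in $S$. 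We must show this peeling loses little: the peeled vertices send most of their edges to high-degree vertices outside $L_{b_{j+1}}$. The lightness argument of Step 1, applied with the dense high-degree side as a connector, shows that few vertices can do this without creating a cycle through $b^{3/2}$ low-degree vertices.

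\textbf{Step 3: the boundary condition.} We need $\lvert S\rvert/12$ vertices with at most two neighbours outside $S$. We would strengthen the choice of $S$ by taking $S$ minimal or maximal with respect to a potential such as $e(S,\bar S)-\lvert S\rvert$. If fewer than $\lvert S\rvert/12$ vertices had at most two outside neighbours, then more than $11\lvert S\rvert/12$ vertices have at least three edges leaving $S$. The outside neighbours, together with paths inside $H$ (a high-minimum-degree graph is highly connected after cleaning to an expander), give a cycle through many low-degree vertices of $S$ via a linkage or Pósa rotation argument, contradicting lightness. The threshold "three" is what makes ear/tree counting work: leaves-versus-branching in a forest of outside connections.

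The main obstacle is Step 2 combined with Step 3. We must produce long cycles through many low-degree vertices using expansion of a polynomially dense but possibly sparse-relative-to-$n$ subgraph. I would first try to pass to a Koml\'os--Szemer\'edi-type sublinear expander inside $G[L_B]$ and use its robust connectivity to link the low-degree vertices into one cycle.
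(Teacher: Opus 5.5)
Your outline has a genuine gap. Both mechanisms it relies on are false as stated, and the step that actually produces a small $S$ is missing.

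\textbf{Steps~1--2.} You assert that a vertex of huge degree with many neighbours in $L_b$ yields long cycles through $L_b$. From this you conclude that few edges go up from $L_b$, and that peeling $S\subseteq L_{b_{j+1}}$ loses little. This is backwards. Take $K_{s,N}$ with $N\gg s\ge\delta_0$. It is light: a cycle has at most $s$ vertices of degree $s$ and at most $2s$ vertices in total, while $s<s^{3/2}$ and $2s<N^{3/2}$. Yet every vertex of degree $s$ sends all its edges to vertices of degree $N$. So $G[L_B]$ is edgeless for $s\le B<N$, your peeling deletes everything, and any valid $S$ must contain vertices of huge degree. (Also, $L_{b_{j+1}}$ contains all vertices of degree at most $b_{j+1}$, not only those of degree at least $b_j$.)

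A few hubs with many low-degree neighbours are harmless. What forces a heavy cycle is many low-degree vertices each having two neighbours of much larger degree, where each high-degree vertex is a designated neighbour of relatively few of them. This is Lemma~\ref{lem:two-parents}: a random split gives the high-degree vertices many neighbours in a set $Z$ disjoint from the cherries. A longest alternating sequence through $Z$ and the cherries (Lemma~\ref{lem:expansion}) then closes a heavy cycle. The multiplicity bound $q$ there cannot be dropped. So the paper puts the hubs (degree $>a$, at least $\tau$ neighbours in the interior $I$) into a set $U$, which is small by Lemma~\ref{lem:down}. It keeps them inside the graph $F$ on $L\cup U$ and deletes the edges inside $U$, so that Lemma~\ref{lem:light-circumference} still bounds all cycles of $F$ by $2a^{3/2}$.

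\textbf{Step~3} fails for the same reason. Take $h$ hubs with $3\le h<\sqrt b$, each adjacent to every vertex of many disjoint copies of $K_b$. A cycle meets at most $h$ copies, so it contains at most $hb<b^{3/2}$ vertices other than hubs, and the graph is light. But for $S$ a copy of $K_b$, every vertex has $h\ge 3$ outside neighbours. No linkage or P\'osa argument converts ``three outside neighbours'' into a heavy cycle. In the paper the number two is $1+1$:
\begin{itemize}
\item at most one far-down neighbour (vertices of $A^*$, from Lemma~\ref{lem:down} with $r=2$);
\item at most one neighbour of much larger degree outside $V_i$. This comes from Lemma~\ref{lem:two-parents} applied to $P_0\cup P_1$, which is legitimate because these vertices have fewer than $\tau$ neighbours in $C$.
\end{itemize}

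\textbf{Localization.} You mention a treewidth-like structure but never use it, and it is the engine of the proof. A depth-first search forest of the bounded-circumference graph $F$ is cut into regions $R_i$ of size $O(a^2)$ with small separators $S_i\supseteq N_F(R_i)$ (Lemma~\ref{lem:regions}). Peeling each $F[R_i\cup S_i]$ to minimum degree $\tau=a^{1/300}$ loses few interior vertices, since these have almost all their edges in $F$. A region rich in good vertices then gives $S=V_i$ with $\lvert S\rvert<a^3$ and $\delta(G[S])\ge\tau\ge\lvert S\rvert^{1/1000}$. A global sublevel set constrained only by $\lvert L_{b_{j+1}}\rvert\le b_{j+1}^K$ controls neither the internal degrees nor the boundary.
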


We call such a set $S$, and the graph $G[S]$, \emph{near-closed}.

\begin{theorem}\label{thm:qoh}
For every $\eps>0$, there exists $C_\eps$ such that the edges of every graph $G$ on $n$ vertices with $\delta(G)\geq n^{\eps}$ can be decomposed into at most $C_\eps n$ cycles and edges.
\end{theorem}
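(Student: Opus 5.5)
The plan is to exploit the fact that $\delta(G)\ge n^{\eps}$ makes the whole graph behave like a polynomially dense expander. That lets me replace the $\log\log n$ or $\log^* n$ rounds of earlier arguments by $O(1/\eps)$ rounds, each using $O(n)$ cycles and edges.

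\textbf{Reserve.} First I would reserve a sparse random subgraph $R\subseteq G$. Each edge is kept independently with probability $p=n^{-\eps/2}$. With high probability every vertex then has $d_R(v)=\Theta(p\,d_G(v))\ge n^{\eps/2}/2$. I would then pass, as Buci\'c and Montgomery do, to an expander decomposition of $R$: vertex-disjoint expanders $R_1,\dots,R_m$, each of polynomial minimum degree and logarithmic diameter, covering all but $O(n)$ edges of $R$ in a controlled way.

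The rest of the proof alternates two operations on $G'=G-E(R)$, together with a leftover graph $L$ that initially is $G'$.
\begin{itemize}[leftmargin=*]
\item[(a)] \emph{Greedy long cycles.} As long as $L$ has average degree at least $n^{\eps/10}$, I remove cycles of length at least a constant fraction of the current average degree. Halving the average degree of $L$ then costs $O(n)$ cycles. I stop once the maximum degree of $L$ has dropped by a factor $n^{\eps/10}$ after removing low-degree vertices; the number of halvings needed is $O(1)$ relative to that factor, not $\log n$.
\item[(b)] \emph{Absorbing into $R$.} I group the edges of $L$ into families of $k=n^{\eps/20}$ edges that are vertex-disjoint and lie inside a common expander $R_j$. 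Using short paths of length $O(\log n)$ in $R_j$, I link each family into one cycle. The linking is done greedily while avoiding already-used vertices, which is possible because $R_j$ has polynomial degree and each vertex is used by at most $n^{o(\eps)}$ paths. This uses $O(e(L)/k)$ cycles and consumes $O(e(L)\log n/k)$ edges of $R$.
\end{itemize}
The consumed edges of $R$ become the new leftover, and their average degree is smaller by a factor of roughly $n^{\eps/20}/\log n$. So after $O(1/\eps)$ iterations the leftover has $O(n)$ edges, which are taken as single edges.

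\textbf{Bookkeeping.} I would organise the reservation hierarchically. I choose nested random subgraphs $R^{(1)}\supseteq R^{(2)}\supseteq\dots\supseteq R^{(t)}$ with $t=O(1/\eps)$, and densities $n^{-i\eps/4}$ so that each still has minimum degree $n^{\eps/4}$. Round $i$ uses $R^{(i)}\setminus R^{(i+1)}$ for linking, and the edges of that layer not used for linking are themselves fed into round $i+1$ as part of its leftover. The potential-style count then becomes a finite telescoping sum. Each round contributes $O(n)$ cycles: $O(n)$ from step (a), plus $e(L)/k=O(n\cdot n^{1-i\eps/4}/n^{\eps/20})$ from step (b), which is $O(n)$ once the parameters are aligned. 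Since there are $C(\eps)$ rounds, the total is $C_\eps n$.

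\textbf{Main obstacle.} The delicate step is (b): linking vertex-disjoint leftover edges into a single cycle through an expander whose edges are shared by many such cycles. It has to be done so that no vertex's $R$-degree is exhausted and the paths stay internally disjoint within one cycle. My first attempt would be a random greedy embedding with a degree cap: each vertex may be an interior vertex of at most $d_R(v)/4$ linking paths in total. I would then apply the standard fact that an expander with a small set of forbidden vertices removed remains an expander with logarithmic diameter, as in the Buci\'c--Montgomery decomposition of expanders. If the cap fails for high-degree leftover vertices, I would first split $L$ by degree classes and treat each class against the layer of $R$ whose density matches it.
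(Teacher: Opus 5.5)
There is a genuine gap: the proposal does not give a linear bound, for two independent reasons. First, step (a) is the Erd\H{o}s--Gallai greedy argument, and it does not cost $O(n)$ per round. Removing cycles of length at least a constant fraction of the average degree costs up to order $n$ cycles per halving of the average degree. Lowering the degree by a factor $n^{\eps/10}$ takes $(\eps/10)\log n$ halvings, so you only get $O(\eps n\log n)$ cycles per round. Your first round, which starts from average degree as large as $n$, even needs $\Theta(\log n)$ halvings. The claim that the number of halvings is $O(1)$ rather than $\log n$ is false. Reducing an arbitrary graph to $n\log^{O(1)}n$ edges with $O(n)$ cycles is a hard theorem, namely Theorem~\ref{thm:bm}\ref{bm:bulk} of Buci\'c and Montgomery, and the paper applies it once, to $G-E(R)$.

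Second, your scheme never disposes of the reservoir. A layer with minimum degree $n^{\eps/4}$ has at least $n^{1+\eps/4}/2$ edges, while linking consumes only $O(e(L)\log n/k)$ of them, so the unused reservoir edges are always superlinear in number. The nested layers cannot run for $O(1/\eps)$ rounds either. Since $\delta(G)$ may be only $n^{\eps}$, the layer $R^{(i)}$ of density $n^{-i\eps/4}$ may have minimum degree only about $n^{(1-i/4)\eps}$, so at most three layers keep minimum degree $n^{\eps/4}$. The unused edges of the last layer then cannot be recorded as single edges. (The consumed reservoir edges lie on your cycles, so the claimed drop by a factor $n^{\eps/20}/\log n$ refers to the wrong edge set.) The missing idea is that a random subgraph of density $n^{-\Omega(1)}$ is $(\rho,c)$-sparse, so its unused edges decompose into $O(n)$ cycles and edges by Theorem~\ref{thm:cfs}. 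This is why one round suffices in the paper.

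Step (b) also has smaller holes:
\begin{itemize}
\item The Buci\'c--Montgomery splitting gives edge-disjoint expanders that may share vertices and discards $O(sn\log n)$ edges. It does not give vertex-disjoint expanders missing only $O(n)$ edges.
\item You only link edges whose ends lie in a common expander. Edges between different expanders, edges at vertices in no expander, and edges at vertices of large leftover degree are not handled. The paper treats them with constructions through several expanders, with pairs at $W$, and with the graph $B_{\mathrm{odd}}$.
\item Edge-disjointness of all linking paths needs an argument such as Lemma~\ref{lem:routing}. An unproved bound of $n^{o(\eps)}$ paths per vertex does not suffice.
\end{itemize}
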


Theorem~\ref{thm:qoh} extends the result of Conlon, Fox and Sudakov~\cite{cfs} for graphs of linear minimum degree to graphs of polynomial minimum degree. We prove it using the robust sublinear expanders and a cycle decomposition result of Buci\'c and Montgomery~\cite{bm} together with a decomposition result of Conlon, Fox and Sudakov~\cite{cfs} for sparse graphs.

Theorem~\ref{thm:structure} is the main new ingredient, and its proof is elementary. We did not try to optimize the exponent $1/1000$ or the constant $12$.

We organize the rest of the paper as follows. In Section~\ref{sec:outline}, we deduce Theorem~\ref{thm:eg} from Theorems~\ref{thm:structure} and~\ref{thm:qoh}, and we outline the proofs of these two theorems. In Section~\ref{sec:prelim}, we collect facts on light graphs and regions. In Section~\ref{sec:counting}, we prove counting lemmas for cycles through vertices of small degree. We prove Theorem~\ref{thm:structure} in Section~\ref{sec:structure} and Theorem~\ref{thm:qoh} in Section~\ref{sec:qoh}. We make concluding remarks in Section~\ref{sec:conclusion}.

\subsection{Notation}\label{sec:notation}

All graphs are finite and simple. A \emph{decomposition} of a graph $G$ into cycles and edges is a partition of $E(G)$ into the edge sets of cycles and single edges. We write $\lvert G\rvert$ and $e(G)$ for the numbers of vertices and edges of $G$, and $N_G(X)$ for the set of vertices outside $X$ which have a neighbor in $X$. We omit the subscript $G$ when the graph is clear. All logarithms are to base two, as in~\cite{bm}. We often treat large numbers as integers by omitting floors and ceilings when this does not affect the argument. In particular, we apply lemmas whose parameters are integers with the parameters rounded up.

We use Chernoff's bound in the following form (see for example~\cite[Theorem~4]{bm}). If $\xi$ is a binomial random variable with mean $\mu$ and $0<\theta<1$, then
$$\Pr[\xi\leq(1-\theta)\mu]\leq\exp(-\theta^2\mu/2).$$

\section{Proof of \texorpdfstring{Theorem~\ref{thm:eg}}{Theorem 1.1} and outline}\label{sec:outline}

\subsection{The procedure}\label{sec:deduction}

The proof builds a decomposition of $G$ step by step. To \emph{record} a cycle or an edge means to add it to this decomposition and to delete its edges from the current graph. Figure~\ref{fig:procedure} shows the procedure.

\begin{figure}[ht]
\centering
\begin{tikzpicture}[node distance=7mm and 9mm]
\node[box,fill=gray!8] (R) {current graph $R$};
\node[box,right=of R,fill=yellow!12] (q1) {$R$ has a vertex $v$\\ with $1\leq d_R(v)<k$?};
\node[box,right=of q1,fill=green!8] (a) {(a) record the edges at $v$\\ as single edges};
\node[box,below=of q1,fill=yellow!12] (q2) {$R'$ has a heavy cycle $C$?};
\node[box,right=of q2,fill=green!8] (b) {(b) record $C$\\ \footnotesize $\Phi$ drops by $\geq 1$};
\node[box,below=of q2,fill=blue!6] (c) {(c) $R'$ is light. Theorem~\ref{thm:structure} gives $S$,\\ decompose $R'[S]$ by Theorem~\ref{thm:qoh}};
\node[box,right=of c,fill=green!8] (d) {$\geq \lvert S\rvert/12$ vertices\\ have degree $\leq 2$ in $R$};
\draw[arr] (R) -- (q1);
\draw[arr] (q1) -- node[above,font=\footnotesize]{yes} (a);
\draw[arr] (q1) -- node[left,font=\footnotesize]{no} (q2);
\draw[arr] (q2) -- node[above,font=\footnotesize]{yes} (b);
\draw[arr] (q2) -- node[left,font=\footnotesize]{no} (c);
\draw[arr] (c) -- (d);
\draw[arr,dashed] (a.north) -- ++(0,0.45) -| ($(R.north)+(0.25,0)$);
\draw[arr,dashed] (b.east) -- ($(b.east -| a.east)+(0.45,0)$) |- ($(R.north)+(-0.25,1.05)$) -- ($(R.north)+(-0.25,0)$);
\draw[arr,dashed] (d.south) -- ++(0,-0.5) -| (R.south);
\end{tikzpicture}
\caption{The procedure in the proof of Theorem~\ref{thm:eg}.}
\label{fig:procedure}
\end{figure}

\begin{proof}[Proof of Theorem~\ref{thm:eg}]
Let $\delta_0$ be given by Theorem~\ref{thm:structure} and $C_{1/1000}$ by Theorem~\ref{thm:qoh} with $\eps=1/1000$, and fix an integer $k\geq\max\{\delta_0,3,C_{1/1000}\}$.
Let $G$ be a graph on $n$ vertices. We repeat the following steps on a current spanning subgraph $R$ of $G$, starting with $R=G$, until $R$ has no edges.
\begin{enumerate}[label=\upshape(\alph*)]
\item\label{step:a} If $R$ has a vertex $v$ with $1\leq d_R(v)<k$, we record every edge of $R$ incident with $v$ as a single edge.
\item\label{step:b} Otherwise, let $R'$ be the graph obtained from $R$ by removing its isolated vertices. If $R'$ has a heavy cycle $C$, we record $C$.
\item\label{step:c} Otherwise, $R'$ is a light graph, and $\delta(R')\geq k\geq\delta_0$ since step~\ref{step:a} does not apply. Theorem~\ref{thm:structure} gives a set $S\subseteq V(R')$. We record a decomposition of the edges of $R'[S]$ into at most $k\lvert S\rvert$ cycles and edges.
\end{enumerate}
In~\ref{step:c}, such a decomposition exists by Theorem~\ref{thm:qoh}, since $R'[S]$ has minimum degree at least $\lvert S\rvert^{1/1000}$ and $C_{1/1000}\leq k$. Each step removes at least one edge. For~\ref{step:c}, this holds since $S$ is nonempty and $R'[S]$ has minimum degree at least one. Thus the procedure stops, and the recorded cycles and edges form a decomposition of $G$. We now bound their number.

Each application of~\ref{step:a} turns a vertex into an isolated vertex, and isolated vertices are never used again. Thus~\ref{step:a} is applied at most $n$ times, and it records at most $kn$ edges in total.

We bound the number of applications of~\ref{step:b} by the potential $\Phi(R)=\sum_{v\in V(G)}f(d_R(v))$, where $f(d)=1-1/\sqrt{d+1}$. As $f$ is increasing, removing edges does not increase $\Phi(R)$. Initially, $\Phi(G)<n$. Suppose~\ref{step:b} removes a heavy cycle $C$ of $R'$, and let $b\geq 1$ be such that at least $b^{3/2}$ vertices of $C$ have degree at most $b$ in $R'$. Each of these vertices $v$ lies on $C$, so its degree $d=d_R(v)$ satisfies $2\leq d\leq b$, and its contribution decreases by
$$f(d)-f(d-2)=\frac{1}{\sqrt{d-1}}-\frac{1}{\sqrt{d+1}}=\frac12\int_{d-1}^{d+1}x^{-3/2}\,dx\geq d^{-3/2}\geq b^{-3/2},$$
where the first inequality holds since $x^{-3/2}$ is convex. Thus $\Phi(R)$ decreases by at least one. As $\Phi(R)\geq 0$ throughout,~\ref{step:b} is applied at most $n$ times.

Finally, consider~\ref{step:c}. By Theorem~\ref{thm:structure}, at least $\lvert S\rvert/12$ vertices of $S$ have at most two neighbors outside $S$ in $R'$. After the edges of $R'[S]$ are removed, each of these vertices has degree at most $2<k$, so~\ref{step:a} removes all its remaining edges before the next application of~\ref{step:c}, and it is isolated in all later steps and hence lies in no later set $S\subseteq V(R')$. Thus each vertex of $G$ is one of these vertices in at most one application of~\ref{step:c}. Therefore the sum of $\lvert S\rvert$ over all applications of~\ref{step:c} is at most $12n$, and~\ref{step:c} records at most $12kn$ cycles and edges.

In total, we record at most $14kn$ cycles and edges. This proves Theorem~\ref{thm:eg}.
\end{proof}

Since we test for heavy cycles again after each step, we need no statement on how lightness behaves when edges are removed.

\subsection{Outline of the proof of Theorem~\ref{thm:structure}}\label{sec:outline-structure}

We prove Theorem~\ref{thm:structure} in Section~\ref{sec:structure}. The proofs of Conlon, Fox and Sudakov~\cite{cfs} and of Buci\'c and Montgomery~\cite{bm} remove cycles which are long compared with the average degree, so they control long cycles only in terms of the average degree. Let $G$ be a light graph with large minimum degree. We use two consequences of lightness.

First, lightness rules out long cycles (Section~\ref{sec:prelim}). Let $F$ be a subgraph of $G$ in which the vertices of $G$-degree greater than $b$ form an independent set. At least half of the vertices in any cycle of $F$ have $G$-degree at most $b$, so every cycle of $F$ has length less than $2b^{3/2}$ (Lemma~\ref{lem:light-circumference}). Hence, in a depth-first search forest of $F$, each edge of $F$ joins a vertex to an ancestor at distance less than $2b^{3/2}$. For a chosen $m$, cutting this forest into pieces of about $m$ vertices thus gives disjoint regions $R_i$ of size $O(m+b^{3/2})$ which together contain almost all vertices of $F$, and small separators $S_i$ such that each $F$-neighbor of a vertex of $R_i$ lies in $R_i\cup S_i$ (Lemma~\ref{lem:regions}).

Second, only $o(n)$ vertices of small degree have two neighbors of much larger degree, provided that no vertex of large degree has many neighbors among them (Section~\ref{sec:counting}). Indeed, otherwise there are many paths $y\,x\,y'$ with $x$ of small degree and $y,y'$ of large degree, and the vertices of large degree have many other neighbors, through which we can link many of these paths into one cycle. This cycle passes through many vertices of small degree, so it is heavy (Lemmas~\ref{lem:expansion} and~\ref{lem:two-parents}). This condition cannot be dropped. If two vertices are adjacent to all vertices of many disjoint copies of $K_b$, then the graph is light for large $b$, but every vertex of these copies has two neighbors of large degree. This is why, in step~\ref{st:regions}, the vertices of large degree with many neighbors in the interior of the window are treated separately.

Using Lemma~\ref{lem:two-parents}, we show in Lemma~\ref{lem:down} that in a light graph on $n$ vertices, at most $(1/r+d^{-1/7})n$ vertices of degree at least $d$ have $r$ neighbors of much smaller degree. We use Lemma~\ref{lem:down} in steps~\ref{st:window} and~\ref{st:regions} below.

Using these two tools, we prove Theorem~\ref{thm:structure} in the five steps below, shown in Figure~\ref{fig:steps}.

\begin{figure}[ht]
\centering
\begin{tikzpicture}[x=1cm,y=1cm]
\begin{scope}
\draw[-{Stealth[length=1.6mm]}] (0,0.7)--(2.6,0.7);
\foreach \k in {0,...,5}{\draw ({0.12+0.47*\k},0.76)--({0.12+0.47*\k},0.64);}
\fill[blue!12] (0.59,0.8) rectangle (2.00,1.75);
\draw[blue!60!black] (0.59,0.8) rectangle (2.00,1.75);
\fill[red!25] (1.06,0.8) rectangle (1.53,1.3);
\draw[red!70!black] (1.06,0.8) rectangle (1.53,1.3);
\node[font=\scriptsize,blue!50!black] at (0.8,1.55) {$I$};
\node[font=\scriptsize,red!60!black] at (1.295,1.05) {$A$};
\draw[decorate,decoration={brace,amplitude=3pt}] (0.12,1.95)--(2.47,1.95) node[midway,above=2pt,font=\scriptsize]{$L$};
\node[font=\scriptsize,below] at (0.12,0.62) {$a^{1/243}$};
\node[font=\scriptsize,below] at (2.47,0.62) {$a$};
\end{scope}
\begin{scope}[xshift=3.1cm]
\draw[gray!60,rounded corners=3pt,fill=gray!6] (0,0.1) rectangle (2.6,1.6);
\foreach \a/\b in {0.08/0.80,0.96/1.64,1.80/2.52}{\draw[blue!50!black,rounded corners=2pt,fill=blue!8] (\a,0.3) rectangle (\b,1.45);}
\foreach \a in {0.82,1.66}{\fill[orange!40] (\a,0.38) rectangle ({\a+0.12},1.37);}
\draw (0.45,2.3)--(0.32,1.15); \draw (0.45,2.3)--(0.62,0.85); \draw (1.30,2.3)--(1.18,1.15); \draw (1.30,2.3)--(1.45,0.85); \draw (2.15,2.3)--(2.05,1.15); \draw (2.15,2.3)--(2.30,0.85);
\foreach \x in {0.45,1.30,2.15}{\node[upvtx] at (\x,2.3) {};}
\node[font=\scriptsize,blue!60!black,anchor=east] at (0.32,2.3) {$U$};
\node[font=\scriptsize] at (0.44,0.55) {$R_i$};
\node[font=\scriptsize] at (0.88,0.21) {$S_i$};
\node[font=\scriptsize,gray!50!black,anchor=west] at (2.6,0.25) {$F$};
\end{scope}
\begin{scope}[xshift=6.2cm]
\draw[blue!50!black,rounded corners=4pt,fill=blue!6] (0.1,0.15) rectangle (2.5,2.35);
\draw[red!70!black,dashed,rounded corners=3pt] (0.5,0.5) rectangle (2.05,1.85);
\coordinate (c1) at (0.85,1.15); \coordinate (c2) at (1.25,1.5); \coordinate (c3) at (1.7,1.2); \coordinate (c4) at (1.05,0.75); \coordinate (c5) at (1.55,0.75);
\draw (c1)--(c2)--(c3)--(c5)--(c4)--(c1); \draw (c1)--(c3); \draw (c2)--(c4); \draw (c2)--(c5); \draw (c4)--(c3);
\draw[gray] (0.3,0.35)--(c4); \draw[gray] (2.3,0.4)--(c5); \draw[gray] (0.3,2.1)--(c1); \draw[gray] (2.3,1.6)--(c3);
\foreach \c in {c1,c2,c3,c4,c5}{\node[vtx] at (\c) {};}
\foreach \q in {(0.3,0.35),(2.3,0.4),(0.3,2.1),(2.3,1.6)}{\node[circle,draw,fill=white,inner sep=1.2pt] at \q {};}
\node[font=\scriptsize,red!60!black] at (0.78,1.66) {$V_i$};
\node[font=\scriptsize,blue!50!black] at (2.2,2.12) {$R_i$};
\end{scope}
\begin{scope}[xshift=9.3cm]
\draw[red!70!black,dashed,rounded corners=3pt,fill=red!4] (0.15,0.65) rectangle (1.85,2.05);
\coordinate (x) at (1.15,1.2);
\foreach \q in {(0.55,1.4),(0.85,1.8),(1.55,1.7),(0.6,0.9)}{\draw (x)--\q; \node[vtx] at \q {};}
\draw[red!70!black,thick] (x)--(2.35,2.35); \draw[red!70!black,thick] (x)--(2.3,0.15);
\node[upvtx] at (2.35,2.35) {};
\node[lowvtx] at (2.3,0.15) {};
\node[bigvtx] at (x) {};
\node[font=\scriptsize] at (1.05,0.95) {$x$};
\node[font=\scriptsize,anchor=west] at (2.45,2.35) {$y$};
\node[font=\scriptsize,anchor=west] at (2.4,0.15) {$z$};
\node[font=\scriptsize,red!60!black] at (0.42,1.87) {$V_i$};
\end{scope}
\begin{scope}[xshift=12.4cm]
\foreach \a in {0.05,0.7,1.35,2.0}{\draw[blue!50!black,rounded corners=2pt,fill=blue!6] (\a,0.45) rectangle ({\a+0.55},1.85);}
\draw[green!50!black,line width=1.1pt,rounded corners=2pt] (0.7,0.45) rectangle (1.25,1.85);
\foreach \q in {(0.32,1.3),(0.97,1.55),(0.85,1.15),(1.1,0.95),(0.92,0.7),(1.62,1.1),(2.2,1.45),(2.4,0.8)}{\node[bigvtx] at \q {};}
\node[font=\scriptsize,green!40!black] at (0.975,2.1) {$R_i$};
\end{scope}
\foreach \k in {1,2,3,4}{\draw[arr] ({3.1*\k-0.47},1.25)--({3.1*\k-0.03},1.25);}
\foreach \k/\t in {0/{(1) window},1/{(2) regions},2/{(3) removal},3/{(4) outside neighbors},4/{(5) one region}}{\node[font=\footnotesize] at ({3.1*\k+1.3},-0.35) {\t};}
\end{tikzpicture}
\caption{The five steps in the proof of Theorem~\ref{thm:structure}. Open circles are removed vertices. In (2) and (4), degrees increase from bottom to top. In (4), a remaining vertex $x$ of $A$ has at most one neighbor $z$ of much smaller degree and at most one neighbor $y$ outside $V_i$ of much larger degree. In (5), we take $S=V_i$ for a region $R_i$ which contains many remaining vertices of $A$ (dots).}
\label{fig:steps}
\end{figure}

\begin{enumerate}[label=\upshape(\arabic*)]
\item\label{st:window} \emph{A degree window} (Section~\ref{sec:window}). We divide the degrees into windows $(a^{1/243},a]$, and we shift all windows together by a factor $3$ in the exponent. Averaging over the five shifts gives a window $L$ with top $a$ in which at least $\lvert L\rvert/11$ vertices lie in the middle of the window and have at most one neighbor of much smaller degree. Let $A$ be the set of these vertices. We also make sure that $\lvert L\rvert\geq n/(1000\log a)$, so that the bounds of the form $a^{-c}n$ obtained from the counting lemmas are small compared with $\lvert L\rvert$. The vertices of degree between $a^{1/81}$ and $a^{1/3}$ form the interior $I$ of the window, and $A\subseteq I$.
\item\label{st:regions} \emph{Regions} (Sections~\ref{sec:graphF} and~\ref{sec:regions}). The vertices of $I$ have degree at most $a^{1/3}$, so by Lemma~\ref{lem:down}, only a small number of vertices of degree greater than $a$ have many neighbors in $I$. Let $U$ be the set of these vertices, and let $F$ be the graph on $L\cup U$ which contains all edges of $G$ on $L\cup U$ except those inside $U$. Vertices below the window have small degree, and vertices of degree greater than $a$ outside $U$ have a small number of neighbors in $I$. So not many edges join $I$ to vertices outside $L\cup U$. The vertices of $F$ of $G$-degree greater than $a$ lie in $U$ and so form an independent set in $F$. Thus, by the first tool with $b=a$ and $m=a^2$, we find regions in $F$ with $O(a^2)$ vertices each.
\item\label{st:removal} \emph{Removing vertices of small degree} (Section~\ref{sec:peeling}). In each graph $F[R_i\cup S_i]$, we repeatedly remove vertices of small degree. The remaining set $V_i$ induces a subgraph of minimum degree at least $\tau$, where $\tau$ is a small power of $a$. The vertices of $I$ have large degree, and only a small fraction of their edges leave $F$. Since $N_F(R_i)\subseteq S_i$, all edges of $F$ at a vertex of $R_i$ lie in $F[R_i\cup S_i]$. So, by counting the edges of the removed vertices, only a small fraction of the vertices of $I$ are removed. We then discard the small number of vertices of $A$ which lie in a separator or are adjacent to a removed vertex of a separator or of $I$.
\item\label{st:outside} \emph{Neighbors outside the sets $V_i$} (Section~\ref{sec:parentcount}). Let $x$ be a remaining vertex of $A$, and let $R_i$ be the region containing $x$. At most one neighbor of $x$ has much smaller degree. Each neighbor of $x$ outside $V_i$, except this one, has degree greater than $a^{1/3}$ and fewer than $\tau$ neighbors among the remaining vertices of $A$ in all regions together. One application of the second tool to all regions shows that almost all remaining vertices of $A$ have at most one neighbor of degree greater than $a^{1/3}$ outside their set $V_i$. So most of the remaining vertices of $A$ have at most two neighbors outside their set $V_i$.
\item\label{st:choose} \emph{Choosing one region} (Section~\ref{sec:choose}). The previous step gives $\Omega(\lvert L\rvert)$ such vertices. Comparing their number with the sizes of the regions, we find a region $R_i$ which contains at least $\lvert R_i\cup S_i\rvert/12$ of these vertices, and we take $S=V_i$. The set $R_i\cup S_i$ has $O(a^2)$ vertices. By step~\ref{st:removal}, $G[S]$ has minimum degree at least $\tau$, and so at least $\lvert S\rvert^{1/1000}$, as desired.
\end{enumerate}

\subsection{Outline of the proof of Theorem~\ref{thm:qoh}}\label{sec:outline-qoh}

We prove Theorem~\ref{thm:qoh} in Section~\ref{sec:qoh}. As our proof does not proceed in rounds, it may record only $O(n)$ single edges. Let $G$ be a graph on $n$ vertices with $\delta(G)\geq n^{\eps}$. We use robust sublinear expanders in the sense of Buci\'c and Montgomery~\cite{bm} (Definition~\ref{def:expander}). An $s$-expander $J$ is a graph in which every set $X$ of at most $2\lvert J\rvert/3$ vertices has at least $\tfrac{\lvert X\rvert}{32\log^2\lvert J\rvert}$ neighbors outside $X$, even after the removal of at most $s\lvert X\rvert$ edges, where $s=n^{2\eps/3}$.

First, we use a lemma of Buci\'c and Montgomery (Theorem~\ref{thm:bm}\ref{bm:split}) to split the edges of $G$ into edge-disjoint expanders $J_1,\dots,J_{j^*}$ with $\sum_i\lvert J_i\rvert\leq 2n$ and a set $F$ of at most $4sn\log n$ other edges. The expanders may share vertices. A vertex which lies in no expander has all incident edges in $F$. As $\delta(G)\geq n^{\eps}$, at most $2\lvert F\rvert/n^{\eps}\leq 8sn\log n/n^{\eps}=n^{1-\Omega(1)}$ vertices lie in no expander. In each expander, we choose a sparse random subgraph, its \emph{reservoir}. Buci\'c and Montgomery~\cite{bm} showed that $O(n)$ cycles can be removed from any graph on $n$ vertices so that only $n\log^{O(1)}n$ edges remain (Theorem~\ref{thm:bm}\ref{bm:bulk}). We apply this to the edges not in the reservoirs, and we let $H$ be the graph of the remaining edges. The edges of $H$ between vertices which lie in no expander or have large degree in $H$ span only $O(n/\log^{600}n)$ vertices, and Theorem~\ref{thm:bm}\ref{bm:bulk} decomposes them into $o(n)$ cycles and edges.

We close almost all other edges of $H$ into cycles through the expanders (Section~\ref{sec:qoh-proof}). We assign each vertex of small degree in $H$ which lies in an expander to one expander which contains it. We decompose the edges of $H$ between vertices of small degree in $H$ which lie in expanders into paths and cut them into short pieces. A family of pieces whose ends are assigned to expanders in a cyclic order is closed into one cycle by paths inside these expanders, which we call \emph{joining paths}. We call such a family a \emph{construction}, and we find constructions as short cycles of an auxiliary multigraph on the expanders (Lemma~\ref{lem:short-cycle}). At each of the $O(n/\log^{600}n)$ vertices which have large degree in $H$ or lie in no expander, we pair the edges whose other ends are assigned to the same expander. We group these pairs (Lemma~\ref{lem:grouping}) and close them into cycles in a similar way. At most one edge remains at such a vertex for each expander, and we close these edges into cycles along the cycles of an auxiliary bipartite graph (Lemma~\ref{lem:medium-cycle}). A routing lemma (Lemma~\ref{lem:routing}) finds joining paths for all but $o(n)$ constructions at once. The joining paths are edge-disjoint, and those of one cycle have disjoint interiors which avoid the other parts of this cycle (Figure~\ref{fig:constructions}(a)). We record the edges of the other constructions as single edges.

The joining paths use edges of the reservoirs. The other edges of the expanders need no separate treatment, since the cycle removal above is applied to all edges not in the reservoirs. Conlon, Fox and Sudakov~\cite{cfs} showed that sparse graphs can be decomposed into linearly many cycles and edges (Theorem~\ref{thm:cfs}), so the edges of the reservoirs which are not used by joining paths can be decomposed at the end. The minimum degree is used in two ways. Since $\delta(G)\geq n^{\eps}$, we can take $s$ to be a power of $n$. Then a random subgraph of density $n^{-\Omega(1)}$ of an expander is again an expander (Lemma~\ref{lem:split}), so the reservoirs can be sparse enough for the result of Conlon, Fox and Sudakov. In addition, only $n^{1-\Omega(1)}$ vertices lie in no expander, as we saw above.

\section{Preliminaries}\label{sec:prelim}

In this section, we collect the facts on light graphs and regions which we use in Section~\ref{sec:structure}.

\begin{lemma}\label{lem:light-circumference}
Let $G$ be a light graph and $b\geq 1$. If $F$ is a subgraph of $G$ in which the vertices of $G$-degree greater than $b$ form an independent set, then every cycle of $F$ has length less than $2b^{3/2}$.
\end{lemma}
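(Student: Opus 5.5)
Let $C$ be a cycle of $F$ of length $\ell$, so $C$ is also a cycle of $G$, and the plan is to count its vertices of $G$-degree at most $b$. Call a vertex of $C$ \emph{high} if its $G$-degree exceeds $b$ and \emph{low} otherwise. By hypothesis, the high vertices form an independent set in $F$, and the edges of $C$ are edges of $F$. So no two consecutive vertices of $C$ are both high.

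The first step is a matching argument on the cycle. Each high vertex $v$ on $C$ is followed, in a fixed cyclic orientation, by a successor, and that successor must be low. The map from high vertices to their successors is injective. Hence the number of low vertices is at least the number of high vertices, which gives at least $\ell/2$ low vertices on $C$.

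The second step applies lightness. Suppose for contradiction that $\ell\geq 2b^{3/2}$. Then $C$ has at least $\ell/2\geq b^{3/2}$ vertices of $G$-degree at most $b$, with $b\geq 1$. By Definition~\ref{def:light}, this makes $C$ a heavy cycle of $G$, contradicting that $G$ is light. Therefore $\ell<2b^{3/2}$.

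There is no real obstacle here. The only point needing care is that heaviness is measured by degrees in $G$, not in $F$, and that $C$, being a subgraph of $F\subseteq G$, is a cycle of $G$. Both are immediate from the statement.
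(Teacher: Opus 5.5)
Your proof is correct and follows essentially the same route as the paper: no two consecutive vertices of the cycle have $G$-degree greater than $b$, so at least half of its vertices have $G$-degree at most $b$, and a cycle of length at least $2b^{3/2}$ would then be heavy in $G$. Your successor-injection argument is just an explicit justification of the ``at least half'' step, which the paper states without elaboration.
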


\begin{proof}
No two consecutive vertices of a cycle of $F$ have $G$-degree greater than $b$, so at least half of its vertices have $G$-degree at most $b$. Hence a cycle of $F$ of length at least $2b^{3/2}$ has at least $b^{3/2}$ vertices of $G$-degree at most $b$, and so it is heavy, a contradiction.
\end{proof}

The next lemma is a version of the usual separator argument for graphs of bounded treewidth.

\begin{lemma}\label{lem:regions}
Let $k\geq 3$ and $m>0$, and let $F$ be a graph in which every cycle has length less than $k$. Then there are disjoint sets $R_1,\dots,R_p$ and sets $S_1,\dots,S_p$ such that $R_i\cap S_i=\varnothing$ and $N_F(R_i)\subseteq S_i$ for each $i\in[p]$, and
$$\lvert R_i\cup S_i\rvert<2m+2k,\qquad\sum_{i\in[p]}\lvert S_i\rvert\leq\frac{2k\lvert F\rvert}{m}\qquad\text{and}\qquad\sum_{i\in[p]}\lvert R_i\rvert\geq\Big(1-\frac{k}{m}\Big)\lvert F\rvert.$$
\end{lemma}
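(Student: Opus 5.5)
We work in each connected component separately and take a depth-first search tree $T$ of each component, rooted at some vertex, so the components give a DFS forest of $F$. In a DFS tree every edge of $F$ joins a vertex to one of its ancestors. If $uv$ is an edge with $u$ an ancestor of $v$ at depth distance $t$, then the tree path from $u$ to $v$ together with $uv$ is a cycle of length $t+1$ (for $t\geq 2$). By hypothesis $t+1<k$, so every edge of $F$ joins a vertex to an ancestor at distance less than $k$. For a vertex $v$, let $A(v)$ be the set of its at most $k-1$ nearest proper ancestors, the ``ancestor window''. Then every $F$-neighbour of $v$ is either in $A(v)$ or is a descendant $w$ of $v$ with $v\in A(w)$.

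Next we cut the forest into pieces greedily from the bottom. We process vertices in decreasing depth, or equivalently in post-order. We maintain the current subtree of not-yet-assigned vertices below each vertex. When some vertex $r$ has at least $m$ unassigned descendants (including itself), and each of its children has fewer than $m$, we let $R$ be the unassigned vertices of the subtree of $r$. This gives $m\leq|R|<$ about $m\cdot(\text{number of children})$, which is too large in general. To fix this, we group the children's subtrees one at a time until the total reaches $m$, yielding $|R|<2m$. Each child subtree has size less than $m$. All vertices of one piece then lie in subtrees of children of a common vertex $r$, or include $r$ itself.

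We then set $S=\bigcup_{v\in R}A(v)\setminus R$ together with the descendants adjacent to $R$. The descendants of vertices in $R$ that are not in $R$ were assigned earlier to other pieces. To avoid including them, we instead declare the pieces' top boundaries as separators. Concretely, for each piece $P$ with top vertex $r_P$, we put the $k-1$ ancestors of $r_P$ (the path above it) into a separator. The cleanest version is the following. We remove from each piece the vertices within distance $k$ below the top of the subtree groups. This gives $R_i$ as the ``deep'' part, and $S_i$ as the path of length $<k$ above plus the removed top layer adjacent to it. Then neighbours of $R_i$ are ancestors within distance $<k$, which lie in the piece or on the path above, or descendants. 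Descendants lie in the same piece, since lower pieces were cut off at their tops and those tops are at distance at least $k$ from $R_i$ only if we discard a layer. This is exactly why $k|F|/m$ vertices are lost. There are at most $|F|/m$ pieces (each of size at least $m$, except the final root remainders, which are handled as pieces themselves). Each piece contributes a separator of at most $2k$ vertices (ancestor path $<k$ plus boundary). This gives $\sum|S_i|\leq 2k|F|/m$, $|R_i\cup S_i|<2m+2k$, and the discarded vertices number at most $k|F|/m$.

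The main obstacle is making the bottom cut sets small. The vertices of a lower piece $P'$ adjacent to a vertex $v$ of $R_i$ are descendants of $v$ within distance $k$. So we must ensure either that these are placed in $S_i$, or that $R_i$ excludes every vertex within distance $k$ above a lower piece's top. Since many lower pieces may hang below one piece, a naive count of their tops fails. I would handle this with the ancestor-window trick. We assign each vertex $w$ of the lower piece only to its own piece and require $R_i$ not to contain the $k$ ancestors of the top $r_{P'}$ of each piece $P'$ hanging below. These ancestors are counted against $P'$ (at most $k$ per piece), not against $P_i$, and are placed in $S_i$. Charging each lower piece's $k$-ancestor path to that piece gives total separator size $\leq 2k\cdot(\#\text{pieces})\leq 2k|F|/m$. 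The per-piece bound $|S_i|<2k$ still needs care, and I would try to secure it by cutting pieces so that each contains at most one hanging lower top of large size.
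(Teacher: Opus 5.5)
Your last paragraph reaches the paper's construction, but the proposal as written is not yet a proof. In the paper's notation, $W_i$ is the $i$-th piece, $u_i$ is the vertex below which it was cut, and $B_u$ is $u$ together with its $k-1$ nearest ancestors. The paper takes $R_i=W_i\setminus\bigcup_{j<i}B_{u_j}$ and $S_i=B_{u_i}\cup\big(W_i\cap\bigcup_{j<i}B_{u_j}\big)$. This works because a vertex of $W_i$ with a neighbour in an earlier piece $W_j$ must be an ancestor of that neighbour lying outside $W_j$, and hence lies in $B_{u_j}$.

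The ``cleanest version'' in your third paragraph fails and must be dropped. It removes the wrong layer, and the removed set is not $O(k)$ per piece. For $F=K_{1,N}$ with $N\gg m$, every piece consists of $m$ leaves at distance $1$ below its top, the centre. Removing the vertices within distance $k$ below the top therefore removes everything, and $\sum_i\lvert R_i\rvert=0$. The vertices that must leave $R_i$ sit near the bottom of the piece, in the windows of the lower tops, as your last paragraph correctly says.

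Your assertion that each piece has a separator of at most $2k$ vertices is also false for the correct construction. One piece can meet the windows of $\Theta(m)$ lower pieces. For example, take a path on about $m$ vertices and give each path vertex a new child carrying $m$ pendant leaves. Then the piece containing the path lies almost entirely in $S_i$, so $\lvert S_i\rvert$ is of order $m$.

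This is why your closing worry, and your proposed re-cutting so that each piece has ``at most one hanging lower top'', are misdirected. The re-cutting is not clearly achievable, and it is not needed. The lemma bounds $\lvert R_i\cup S_i\rvert$, not $\lvert S_i\rvert$. Since the excluded vertices are taken from $W_i$ itself, $R_i\cup S_i=W_i\cup B_{u_i}$, so $\lvert R_i\cup S_i\rvert<2m+1+k$ with no further work.

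The totals come from charging. The sets $W_i\setminus R_i$ are disjoint, as the $W_i$ are, and they lie in the union of the windows. The windows $B_{u_i}$ are counted once more. This gives $2k$ times the number of pieces with a nonempty window.

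Finally, ``at most $\lvert F\rvert/m$ pieces'' is false for your per-component forest when $F$ has many components, since each tree leaves its own remainder piece. This is harmless only because those remainders are rooted at tree roots and so have empty windows. The paper handles this by joining all roots to a new vertex $r$ with $B_r=\varnothing$. Then there is a single remainder, and the other $p-1$ pieces have at least $m$ vertices each, so $(p-1)m\leq\lvert F\rvert$.
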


\begin{proof}
Let $T$ be a depth-first search forest of $F$. We add a new vertex $r$ and join it to the roots of the trees of $T$, and we obtain a tree $T^+$ rooted at $r$. Ancestors and descendants refer to $T^+$. As $T$ is a depth-first search forest, one end of each edge of $F$ is an ancestor of the other end. If an edge $xy$ of $F$ joins $y$ to an ancestor $x$ at distance $j\geq 2$ in $T$, then $xy$ and the path of $T$ from $x$ to $y$ form a cycle of length $j+1$, so $j<k-1$. For a vertex $u$ of $F$, let $B_u$ be the set of $u$ and its ancestors other than $r$ at distance at most $k-1$ from $u$, and put $B_r=\varnothing$. Then $\lvert B_u\rvert\leq k$.

We first choose disjoint sets $W_1,\dots,W_p$ which cover $V(F)$, each with fewer than $2m+1$ vertices, such that the neighbors of each $W_i$ outside the earlier sets lie in a set $B_{u_i}$. Then we obtain $R_i$ from $W_i$ by removing the sets $B_{u_j}$ with $j<i$.

We choose $W_1,W_2,\dots$ one by one, and we call a vertex \emph{unassigned} if it lies in none of the sets chosen so far. If some vertex of $T^+$ has at least $m$ unassigned proper descendants, then we choose such a vertex $u_i$ farthest from $r$. Each child $u'$ of $u_i$ has fewer than $m$ unassigned proper descendants, so the subtree rooted at $u'$ has fewer than $m+1$ unassigned vertices. We go through the children of $u_i$ one by one. We put the unassigned vertices of the subtree rooted at each child into $W_i$, and we stop when $W_i$ has at least $m$ vertices. This is possible, since $u_i$ has at least $m$ unassigned proper descendants. Then $m\leq\lvert W_i\rvert<2m+1$.

Let $w\in W_i$, and let $y\notin W_i$ be a neighbor of $w$ which is unassigned before $W_i$ is chosen. Then $y$ is an ancestor of $w$, since an unassigned descendant of $w$ lies in the same subtree as $w$ and so in $W_i$. As $y\notin W_i$, it does not lie in this subtree, so $y$ is $u_i$ or an ancestor of $u_i$. As $y$ has distance less than $k-1$ from $w$ in $T$, it has distance less than $k-2$ from $u_i$, and so it lies in $B_{u_i}$.

Suppose that we stop after choosing $W_1,\dots,W_{p-1}$. Then $r$ has fewer than $m$ unassigned proper descendants, so fewer than $m$ vertices are unassigned. We let $W_p$ be the set of these vertices, and we put $u_p=r$. As $W_1,\dots,W_{p-1}$ each have at least $m$ vertices, we have $(p-1)m\leq\lvert F\rvert$. As the sets $W_1,\dots,W_p$ cover $V(F)$, the previous paragraph shows that each $i\in[p]$ satisfies
$$N_F(W_i)\subseteq B_{u_i}\cup\bigcup_{j<i}W_j.$$
In particular, a vertex of $W_i$ with a neighbor in an earlier set $W_j$ lies in $B_{u_j}$.

We put
$$R_i=W_i\setminus\bigcup_{j<i}B_{u_j}\qquad\text{and}\qquad S_i=B_{u_i}\cup\Big(W_i\cap\bigcup_{j<i}B_{u_j}\Big).$$
Then $R_i\cap S_i=\varnothing$, since $W_i$ is disjoint from $B_{u_i}$ and $R_i$ avoids the sets $B_{u_j}$ with $j<i$. Let $x\in R_i$ and let $y\notin R_i$ be a neighbor of $x$. If $y\in W_i$, then $y\in S_i$. The vertex $y$ does not lie in an earlier set $W_j$, since otherwise $x\in B_{u_j}$. If $y$ lies in a later set $W_{i'}$, then $y\in B_{u_i}\subseteq S_i$, since $y\in W_{i'}$ has a neighbor in the earlier set $W_i$. Hence $N_F(R_i)\subseteq S_i$. The sets $W_i\setminus R_i=W_i\cap\bigcup_{j<i}B_{u_j}$ are disjoint subsets of $\bigcup_{j<p}B_{u_j}$, so their sizes sum to at most $(p-1)k\leq k\lvert F\rvert/m$. The sets $B_{u_1},\dots,B_{u_{p-1}}$ also have total size at most $(p-1)k\leq k\lvert F\rvert/m$. Hence $\sum_i\lvert S_i\rvert\leq 2k\lvert F\rvert/m$. As the sets $W_i$ cover $V(F)$, we have $\sum_i\lvert R_i\rvert\geq\lvert F\rvert-k\lvert F\rvert/m=(1-k/m)\lvert F\rvert$. Finally, the sets $R_i$ are disjoint, and $R_i\cup S_i=W_i\cup B_{u_i}$, so $\lvert R_i\cup S_i\rvert<2m+1+k\leq 2m+2k$.
\end{proof}

We call the sets $R_i$ \emph{regions} and the sets $S_i$ their \emph{separators}. The sets $S_i$ need not be disjoint from each other or from the regions $R_j$ with $j\neq i$.

\section{Counting cycles through vertices of small degree}\label{sec:counting}

This section proves Lemmas~\ref{lem:two-parents} and~\ref{lem:down}, the two counting lemmas used in Section~\ref{sec:structure}. Lemma~\ref{lem:expansion} is an auxiliary step.

A \emph{cherry} is a path $y\,x\,y'$ of length two. We call $x$ its \emph{center} and $y,y'$ its \emph{ends}.

\begin{lemma}\label{lem:expansion}
Let $F$ be a graph and let $X$, $P$ and $Z$ be disjoint sets of vertices of $F$. Let $t,q\geq 1$ be real numbers and let $m\geq 20t$ be an integer. Suppose that every cycle of $F$ contains at most $t$ vertices of $X$, that each $x\in X$ has at least two neighbors in $P$, and that each $y\in P$ has at most $q$ neighbors in $X$ and exactly $m$ neighbors in $Z$. Then
$$m\lvert X\rvert\leq 20tq\lvert Z\rvert.$$
\end{lemma}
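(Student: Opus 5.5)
We argue by contradiction: assume $m\lvert X\rvert>20tq\lvert Z\rvert$ and build a cycle of $F$ through more than $t$ vertices of $X$. For each $x\in X$ fix two neighbors $y_x,y'_x\in P$. Form an auxiliary multigraph $A$ on $P\cup Z$ with two kinds of edges. The \emph{$F$-edges} are the edges of $F$ between $P$ and $Z$; there are exactly $m\lvert P\rvert$ of them. The \emph{$X$-edges} are one edge $y_xy'_x$ for each $x\in X$, labelled by $x$.

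Since $X,P,Z$ are disjoint and distinct $X$-edges carry distinct labels, replacing each $X$-edge $y_xy'_x$ of a cycle of $A$ by the path $y_x\,x\,y'_x$ gives a cycle of $F$. This also covers a double edge $y_xy'_x=y_{x'}y'_{x'}$, which yields the $4$-cycle $y\,x\,y'\,x'$. The new cycle contains exactly as many vertices of $X$ as the original has $X$-edges. Hence every cycle of $A$ uses at most $t$ $X$-edges. We also record the count $2\lvert X\rvert\leq q\lvert P\rvert$, which follows from counting the edges between $X$ and $P$ in two ways. Combined with the assumption, it gives $\lvert Z\rvert<m\lvert X\rvert/(20tq)\leq m\lvert P\rvert/(40t)$. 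So the bipartite $F$-edge graph between $P$ and $Z$ is dense on the $Z$ side: the average $Z$-degree exceeds $40t$.

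The main step is to find a cycle of $A$ with more than $t$ $X$-edges, and I would mimic the depth-first-search argument of Lemma~\ref{lem:regions}.

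1. Take a DFS forest $T$ of $A$. Every edge of $A$ joins a vertex to an ancestor. An edge whose ancestor lies at distance $j\geq 2$ closes a cycle together with the tree path of length $j$. That cycle contains at most $t$ $X$-edges, so each back edge spans a tree path with few $X$-edges.
2. Every vertex $y\in P$ has $m\geq 20t$ $F$-edges to $Z$. I would use the tree path from $y$ to the root, weighting its edges by whether they are $X$-edges. The target claim is that all but about $2t\lvert Z\rvert$ of the $F$-edges at vertices of $P$ are back edges to ancestors within a bounded number of $X$-edges.
3. This localization should force the $Z$-vertices to carry almost all the $F$-edges of nearby $P$-vertices. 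Concretely, group $P$ by the nearest $Z$-ancestor structure, giving a charging scheme in which each $z\in Z$ absorbs at most $O(tq)$ vertices of $X$ per unit of $m$.
4. Summing the charges would give $m\lvert X\rvert\leq 20tq\lvert Z\rvert$.

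An alternative route for step 3 is more direct. First delete vertices of $A$ to reach a subgraph in which every vertex of $P$ keeps at least $m/2$ $F$-edges and every $z\in Z$ keeps degree at least $10t$. Since $\lvert Z\rvert$ is small, this deletion destroys only a few $X$-edges. Then grow a path greedily, alternating an $X$-edge with short detours $y\,z\,y''$. When a path has fewer than $t+1$ $X$-edges, high degrees let us extend it to a fresh $P$-vertex through a $Z$-vertex. Once it has $t+1$ $X$-edges, close it using a $Z$-vertex adjacent to both ends, which exists by the density bound via pigeonhole on the $m$ $Z$-neighbors of the endpoints.

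The obstacle I expect is exactly this closing and extension step. It has to keep the path's $X$-labels and $P$-vertices distinct while using only $m\geq 20t$ and the bound $q$ on $X$-neighbors. The factor $q$ enters because a $P$-vertex may be an endpoint of up to $q$ $X$-edges, so fresh $X$-edges must be chosen avoiding the at most $q$ already used at each touched vertex. I would first try to make the constant $20$ come from $2\cdot 10$: a factor $2$ from the two ends of each $X$-edge, and $10t$ from the degree threshold in the cleaning step.
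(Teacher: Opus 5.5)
There is a genuine gap: the proposal stops exactly where the content of the lemma lies. Your reduction to the multigraph $A$ and the count $2\lvert X\rvert\leq q\lvert P\rvert$ are fine, but you never produce a cycle of $A$ with more than $t$ $X$-edges.

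The DFS route (steps 2--4) is only a heuristic. The ``target claim'' and the charging scheme are never stated precisely. The back-edge property only bounds the number of $X$-edges on a tree path, not the number of $Z$-vertices such a path can contain, so there is no visible way to reach the inequality from it.

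The greedy route fails at both steps you flag.
\begin{itemize}
\item The closing step is false as stated. Two $P$-vertices with $m$ neighbors in $Z$ each need not have a common neighbor. The hypothesis allows $\lvert Z\rvert$ up to about $m\lvert X\rvert/(20tq)$, which is far larger than $2m$. An average-degree bound on the $Z$ side says nothing about the two specific ends of your path.
\item The extension step also breaks. From a fresh $z$ you must reach a fresh $P$-vertex $y''$ that has an $X$-edge whose other end is also off the path. Up to about $q(2t+2)$ off-path $P$-vertices can have all their $X$-partners on the path. Once $q$ is moderately large, these can include all $10t$ neighbors of $z$. Your cleaning does not even guarantee that a surviving $P$-vertex keeps a surviving $X$-edge.
\item The difficulty is not, as you suggest, avoiding already-used labels. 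It is that the partner of a fresh vertex may already lie on the path.
\end{itemize}

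Three ideas are missing.
\begin{enumerate}
\item Make the $X$-edges a matching first. Greedily choose vertex-disjoint cherries $y_x\,x\,y'_x$. Each $P$-vertex is an end of at most $q$ of them, so you keep at least $\lvert X\rvert/(2q)$ cherries, and their set $Y$ of ends has $\lvert Y\rvert\geq\lvert X\rvert/q$. This is where $q$ really enters. With disjoint cherries, a fresh end automatically comes with a fresh partner.
\item Peel the bipartite graph between $Y$ and $Z$ to minimum degree $r=2t+4$, always deleting the two ends of a cherry together. Under the contrary assumption this graph has $m\lvert Y\rvert>20t\lvert Z\rvert\geq 3r\lvert Z\rvert$ edges, and one checks that at most $3r\lvert Z\rvert$ edges are deleted, so something survives. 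Each surviving $z$ is adjacent to ends of at least $t+2$ surviving cherries.
\item Instead of growing a path and then trying to close it, take a \emph{longest} alternating sequence $z_1,K_1,z_2,K_2,\dots$ of distinct $Z$-vertices and distinct surviving cherries. By maximality, either all $\geq r$ $Z$-neighbors of the last exit end, or all $\geq t+2$ cherries seen by the last $z$, already occur in the sequence. The earliest of them closes a cycle through at least $t+1$ centers, which is the contradiction.
\end{enumerate}
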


\begin{proof}

For each $x\in X$, we fix two neighbors $y_x,y'_x\in P$ and consider the cherry $y_x\,x\,y'_x$. We greedily choose a cherry and discard all cherries which share a vertex with it. Two of these cherries can share only an end since $X$ and $P$ are disjoint. Each vertex of $P$ is an end of at most $q$ cherries, so each choice discards at most $2q$ cherries including the chosen one. Hence we obtain a family $\mathcal{C}$ of at least $\lvert X\rvert/(2q)$ disjoint cherries. Let $Y$ be the set of their ends and let $r=2t+4$.

Suppose to the contrary that $m\lvert X\rvert>20tq\lvert Z\rvert$. Let $J$ be the bipartite graph formed by the edges between $Y$ and $Z$. Since $\lvert Y\rvert=2\lvert\mathcal{C}\rvert\geq\lvert X\rvert/q$ and each end has exactly $m$ neighbors in $Z$, the graph $J$ has $m\lvert Y\rvert\geq m\lvert X\rvert/q>20t\lvert Z\rvert\geq 3r\lvert Z\rvert$ edges, where we used $t\geq 1$. We remove vertices from $J$ until each remaining vertex has at least $r$ remaining neighbors. In each step, we remove a vertex of $Z$ with fewer than $r$ remaining neighbors, or an end with fewer than $r$ remaining neighbors. When we remove an end, we also remove the other end of its cherry. We remove fewer than $r\lvert Z\rvert$ edges together with vertices of $Z$. An end which is removed because it has fewer than $r$ remaining neighbors has lost at least $m-r$ edges by removals of vertices of $Z$, and distinct ends lose distinct edges. Thus at most $r\lvert Z\rvert/(m-r)$ ends are removed for this reason. With each of them, we remove fewer than $r$ edges, and with the other end of its cherry, at most $m$ edges. In total, we remove at most
$$r\lvert Z\rvert\Big(1+\frac{r+m}{m-r}\Big)\leq 3r\lvert Z\rvert$$
edges, where we used $m\geq 20t\geq 3r$. So some edges remain. Since ends are removed in pairs, both ends of each remaining cherry remain. Let $\mathcal{C}'$ be the set of remaining cherries and $Z'$ the set of remaining vertices of $Z$. Each end of a cherry of $\mathcal{C}'$ has at least $r$ neighbors in $Z'$, and each vertex of $Z'$ is adjacent to at least $r$ ends of cherries of $\mathcal{C}'$. As each cherry has two ends, each vertex of $Z'$ is adjacent to ends of at least $r/2=t+2$ cherries of $\mathcal{C}'$.

\begin{claim}\label{cl:cherry-cycle}
$F$ has a cycle through the centers of at least $t+1$ cherries of $\mathcal{C}'$.
\end{claim}

\begin{proof}[Proof of Claim~\ref{cl:cherry-cycle}]
Choose a longest sequence $z_1,K_1,z_2,K_2,\dots$ of distinct vertices $z_i\in Z'$ and distinct cherries $K_i\in\mathcal{C}'$ in which $z_i$ is adjacent to an end of $K_i$, its \emph{entry end}, and $z_{i+1}$ is adjacent to the other end of $K_i$, its \emph{exit end}. This sequence gives a path which passes through the cherries in turn (see Figure~\ref{fig:stars}).

Suppose first that the sequence ends with a cherry $K_k$. By maximality, each of the at least $r$ neighbors in $Z'$ of the exit end of $K_k$ occurs in the sequence. If $z_j$ is the first of them, then $j\leq k-r+1$, and the path from $z_j$ to the exit end of $K_k$ and the edge back to $z_j$ form a cycle through the centers of the at least $r$ cherries $K_j,\dots,K_k$, and $r\geq t+1$.

Suppose next that the sequence ends with a vertex $z_k$. By maximality, each of the at least $t+2$ cherries of $\mathcal{C}'$ with an end adjacent to $z_k$ occurs in the sequence. If $K_j$ is the first of them, then $j\leq k-t-2$. If $z_k$ is adjacent to the entry end of $K_j$, then it closes a cycle through the centers of $K_j,\dots,K_{k-1}$, and otherwise through the centers of $K_{j+1},\dots,K_{k-1}$.

In both cases, the cycle passes through the centers of at least $t+1$ cherries. This proves the claim.
\end{proof}

The cycle of Claim~\ref{cl:cherry-cycle} contains at least $t+1$ vertices of $X$, a contradiction.
\end{proof}

\begin{figure}[ht]
\centering
\begin{minipage}[c]{0.53\textwidth}
\centering
\begin{tikzpicture}[scale=0.9]
\useasboundingbox (-1.85,-1.4) rectangle (7.85,2.2);
\foreach \i/\x in {1/0,2/2.6,3/5.2}{
  \node[lowvtx] (x\i) at (\x+0.65,-0.95) {};
  \node[upvtx] (p\i) at (\x,0) {};
  \node[upvtx] (q\i) at (\x+1.3,0) {};
  \draw[thick] (p\i)--(x\i)--(q\i);
  \node[font=\footnotesize,below] at (x\i.south) {$x_{\i}$};
}
\foreach \i/\x in {1/-1.1,2/1.95,3/4.55,4/7.6}{
  \node[vtx] (z\i) at (\x,1.3) {};
  \node[font=\footnotesize,above] at (z\i.north) {$z_{\i}$};
}
\draw[thick,blue!60!black] (z1)--(p1); \draw[thick,blue!60!black] (q1)--(z2);
\draw[thick,blue!60!black] (z2)--(p2); \draw[thick,blue!60!black] (q2)--(z3);
\draw[thick,blue!60!black] (z3)--(p3); \draw[thick,blue!60!black] (q3)--(z4);
\draw[thick,red!70!black,dashed] (z4) .. controls (6.4,2.7) and (3.8,2.5) .. (p2);
\node[font=\small] at (-1.6,0.35) {$Z$};
\end{tikzpicture}
\end{minipage}\hfill
\begin{minipage}[c]{0.44\textwidth}
\setlength{\abovecaptionskip}{0pt}
\caption{A longest alternating sequence $z_1,K_1,\dots,K_3,z_4$ in the proof of Lemma~\ref{lem:expansion}, where the cherry $K_i$ has center $x_i$. The dashed edge from $z_4$ to the entry end of $K_2$ closes a cycle through $x_2$ and $x_3$.}
\label{fig:stars}
\end{minipage}
\end{figure}

Unlike in Lemma~\ref{lem:expansion}, the condition on $P$ in the next lemma concerns the degrees in $F$, so the vertices of $P$ may have their neighbors anywhere, including in $X$ and $P$.

\begin{lemma}\label{lem:two-parents}
Let $F$ be a graph on at most $n$ vertices, and let $t,q\geq 1$ and $d\geq 1000t$ be real numbers. Let $X$ and $P$ be disjoint sets of vertices of $F$ such that every cycle of $F$ contains at most $t$ vertices of $X$. Suppose that each $x\in X$ has two distinct neighbors $y_x,y'_x\in P$, which we call the \emph{designated neighbors} of $x$, and that each $y\in P$ is a designated neighbor of at most $q$ vertices of $X$ and has degree at least $d$ in $F$. Then
$$\lvert X\rvert\leq\frac{2000tqn}{d}.$$
\end{lemma}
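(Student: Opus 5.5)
The plan is to reduce Lemma~\ref{lem:two-parents} to Lemma~\ref{lem:expansion}, which needs two things the present hypotheses lack: a set $Z$ disjoint from $X\cup P$, and control on the $X$-neighbours of each $y\in P$ rather than on how often $y$ is designated. Both are obtained by a random two-colouring and by passing to a subgraph.

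\emph{Colouring.} Colour each vertex of $F$ red or blue independently with probability $1/2$, and let $Z$ be the set of red vertices. Call $y\in P$ \emph{good} if it is blue and has at least $m:=\lceil d/4\rceil$ red neighbours. Let $X'$ be the set of blue $x\in X$ whose two designated neighbours are both good. Then $X'$, $P':=\{y\in P: y \text{ good}\}$ and $Z$ are pairwise disjoint.

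\emph{Subgraph.} Let $F'$ consist of the edges $xy_x$ and $xy'_x$ for $x\in X'$, together with exactly $m$ edges from each $y\in P'$ to red vertices (by goodness such edges exist). Check the hypotheses of Lemma~\ref{lem:expansion} for $F'$ with the sets $X',P',Z$:
\begin{itemize}[nosep]
\item every cycle of $F'\subseteq F$ contains at most $t$ vertices of $X'\subseteq X$;
\item each $x\in X'$ has its two neighbours $y_x,y'_x$ in $P'$;
\item each $y\in P'$ has at most $q$ neighbours in $X'$, since its only $X'$-edges in $F'$ are designated ones;
\item each $y\in P'$ has exactly $m$ neighbours in $Z$;
\item $m\geq d/4\geq 250t\geq 20t$.
\end{itemize}
Lemma~\ref{lem:expansion} then gives $m\lvert X'\rvert\leq 20tq\lvert Z\rvert\leq 20tqn$, so $\lvert X'\rvert\leq 80tqn/d$ for every outcome of the colouring.

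\emph{Averaging.} It remains to show $\mathbb{E}\lvert X'\rvert\geq c\lvert X\rvert$ with $c$ close to $1/8$; then $\lvert X\rvert\leq 80tqn/(cd)\leq 2000tqn/d$ for any $c\geq 1/25$. Fix $x\in X$. The vertices $x,y_x,y'_x$ are all blue with probability $1/8$. Conditioned on this, $y_x$ still has at least $d-2$ neighbours outside $\{x,y_x,y'_x\}$, each red independently with probability $1/2$. By the Chernoff bound from Section~\ref{sec:notation} with $\theta$ about $1/2$, the probability that $y_x$ has fewer than $d/4$ red neighbours is at most about $\exp(-d/17)$. Since $d\geq 1000$, this is negligible, and the same holds for $y'_x$. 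Hence $\Pr[x\in X']\geq \tfrac18(1-2e^{-50})\geq 1/9$. Choosing a colouring with $\lvert X'\rvert\geq\lvert X\rvert/9$ gives $\lvert X\rvert\leq 720tqn/d$, which is stronger than claimed.

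The step needing most care is this conditioning in the Chernoff estimate. The neighbours of $y_x$ may lie in $X$ or $P$, including $x$ and $y'_x$ themselves. Excluding these at most two vertices leaves at least $d-2$ independent coins, which is why the slack in $d\geq 1000t$ is used here. Everything else is bookkeeping.
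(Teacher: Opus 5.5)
Your proof is correct and follows the paper's argument essentially step by step. The paper uses the same ingredients: a uniformly random half of the vertices, with $Z$ the complement; the threshold $m=\lceil d/4\rceil$; a Chernoff bound over the at least $d-2$ neighbours of $y_x$ other than $x$ and $y'_x$; and Lemma~\ref{lem:expansion} applied to the designated edges plus exactly $m$ edges from each vertex of $P'$ into $Z$. The differences are cosmetic: you take $P'$ to be all good vertices rather than only the designated neighbours of $X'$, which is harmless, and your Chernoff parameter gives $\Pr[x\in X']\geq 1/9$ instead of the paper's $1/16$ (from $\theta=1/3$), hence the better constant. To make that step rigorous, set $\theta=1-d/(4\mu)\geq (d-4)/(2d-4)$ exactly rather than ``about $1/2$''.
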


\begin{proof}
Let $m=\lceil d/4\rceil$. Then $m\geq d/4\geq 250t\geq 20t$. We put each vertex of $F$ into a set $W$ independently with probability $1/2$. Let $X'$ be the set of vertices $x\in X\cap W$ such that $y_x$ and $y'_x$ lie in $W$ and each of them has at least $m$ neighbors outside $W$.

Fix $x\in X$. The vertices $x$, $y_x$ and $y'_x$ are distinct, so they all lie in $W$ with probability $1/8$, and we condition on this event. Let $y\in\{y_x,y'_x\}$. The vertex $y$ has at least $d-2$ neighbors other than $x$, $y_x$ and $y'_x$, and each of them lies outside $W$ independently with probability $1/2$. So the number of these neighbors outside $W$ is a binomial random variable with mean $\mu\geq(d-2)/2$. As $d\geq 1000t\geq 1000$, we have $m\leq d/4+1\leq 2\mu/3$, so Chernoff's bound with $\theta=1/3$ shows that this number is less than $m$ with probability at most $\exp(-\mu/18)<1/4$. Hence $x\in X'$ with probability at least $\tfrac18(1-2\cdot\tfrac14)=\tfrac1{16}$.

We fix an outcome with $\lvert X'\rvert\geq\lvert X\rvert/16$. Let $P'$ be the set of designated neighbors of the vertices of $X'$, and let $Z=V(F)\setminus W$. Then $X'$, $P'$ and $Z$ are disjoint, and each $y\in P'$ has at least $m$ neighbors in $Z$. We apply Lemma~\ref{lem:expansion} to the subgraph of $F$ which consists of the edges $x\,y_x$ and $x\,y'_x$ for $x\in X'$ and of exactly $m$ edges from each vertex of $P'$ to $Z$. In this subgraph, every cycle contains at most $t$ vertices of $X'$, each $x\in X'$ has two neighbors in $P'$, and each $y\in P'$ has at most $q$ neighbors in $X'$ and exactly $m$ neighbors in $Z$. As $\lvert Z\rvert\leq n$ and $m\geq d/4$, this gives
$$\lvert X\rvert\leq 16\lvert X'\rvert\leq\frac{320tqn}{m}\leq\frac{1280tqn}{d}\leq\frac{2000tqn}{d}.$$
\end{proof}

We next apply Lemma~\ref{lem:two-parents} to vertices which have many neighbors of much smaller degree. For a vertex $y$ of a graph $G$, we call a neighbor $z$ of $y$ with $d(z)\leq d(y)^{1/3}$ a \emph{far-down neighbor} of $y$.

\begin{lemma}\label{lem:down}
Let $d$ be sufficiently large and let $r\geq 2$ be an integer. In every light $n$-vertex graph $G$, at most $(1/r+d^{-1/7})n$ vertices $y$ with $d(y)\geq d$ have at least $r$ far-down neighbors.
\end{lemma}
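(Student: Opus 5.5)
Let $Y$ be the set of vertices $y$ with $d(y)\geq d$ and at least $r$ far-down neighbors, and for each $y\in Y$ fix exactly $r$ far-down neighbors, which we call the \emph{chosen} edges at $y$. There are $r\lvert Y\rvert$ chosen edges. The plan is to double count them according to their lower ends. For a vertex $z$, let $c(z)$ be the number of chosen edges at $z$. Since a far-down neighbor of $y$ has degree at most $d(y)^{1/3}<d(y)$, it does not lie in $Y$ or at least plays the lower role. So each chosen edge has a well-defined lower end $z$, and $c(z)\leq d(z)$.

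The vertices with $c(z)\leq 1$ contribute at most $n$ chosen edges in total. Let $X$ be the set of vertices with $c(z)\geq 2$. We split $X$ dyadically by degree: let $X_b$ be the set of $z\in X$ with $b/2<d(z)\leq b$, for $b$ a power of two. Then
$$r\lvert Y\rvert\leq n+\sum_b b\lvert X_b\rvert,$$
and it suffices to show that $\sum_b b\lvert X_b\rvert\leq r\,C n\, d^{-1/6}\log d$. After dividing by $r$, this gives $\lvert Y\rvert\leq n/r+Cnd^{-1/6}\log d\leq (1/r+d^{-1/7})n$ for large $d$. The point is that the factor $r$ from the parameter $q$ below cancels.

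To bound $\lvert X_b\rvert$, we apply Lemma~\ref{lem:two-parents} to $F=G$ with $X=X_b$, $P=Y$ and $t=b^{3/2}$, with the following choices.
\begin{itemize}[label={}]
\item \emph{Cycles.} By lightness, every cycle contains fewer than $b^{3/2}$ vertices of degree at most $b$, hence at most $t$ vertices of $X_b$.
\item \emph{Designated neighbors.} For $z\in X_b$, take any two of its chosen parents as its designated neighbors. These are distinct vertices of $Y$.
\item \emph{Multiplicity $q$.} Each $y$ has only $r$ chosen edges, so it is designated by at most $q=r$ vertices.
\item \emph{Degrees and disjointness.} Every parent of $z\in X_b$ has degree at least $\max(d,d(z)^3)\geq \max(d,b^3/8)$. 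This is larger than $b$, so $X_b\cap Y=\varnothing$. Set $D_b=\max(d,b^3/8)$. We need $D_b\geq 1000t=1000b^{3/2}$, which holds once $d$ is large: for small $b$ it follows from $d$ large, and for large $b$ from $b^3/8\geq 1000b^{3/2}$. The finitely many small-$b$ exceptions are absorbed into the condition that $d$ is large.
\end{itemize}
The lemma gives $\lvert X_b\rvert\leq 2000\,b^{3/2}\,r\,n/D_b$.

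Summing, we get $b\lvert X_b\rvert\leq 2000\,r\,n\,b^{5/2}/D_b$. For $b\leq d^{1/3}$ this is at most $2000rn\,d^{5/6}/d=2000rn\,d^{-1/6}$. For $b>d^{1/3}$ it is at most $16000rn\,b^{-1/2}$, and these terms form a geometric series dominated by $O(rn\,d^{-1/6})$. The number of dyadic $b\leq d^{1/3}$ is $O(\log d)$, so $\sum_b b\lvert X_b\rvert=O(rn\,d^{-1/6}\log d)$, as required.

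I expect the main subtlety to be making the factor $r$ harmless. Using $q=r$ in Lemma~\ref{lem:two-parents} is exactly compensated by dividing the edge count by $r$, so we do not need $q$ to be constant. The remaining checks are routine: verifying $d\geq 1000t$ in the regime of small $b$, and confirming that the $O(\log d)$ dyadic scales are absorbed by the gap between the exponents $1/6$ and $1/7$.
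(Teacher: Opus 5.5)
Your argument is the paper's proof: you choose $r$ far-down neighbors for each vertex, bound the choices at vertices chosen at most once by $n$, split the vertices chosen at least twice into dyadic degree classes, and apply Lemma~\ref{lem:two-parents} to each class with $t=b^{3/2}$, $q=r$ and $d$ replaced by $\max\{d,b^3/8\}$, so that the factor $r$ cancels.

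One step is wrong as written: you cannot take $P=Y$.
\begin{itemize}
\item The degree hypothesis can fail. A vertex of $Y$ that chose no vertex of $X_b$ may have degree between $d$ and $b^3/8$, so it need not have degree at least $D_b$.
\item Disjointness can fail, so your claim $X_b\cap Y=\varnothing$ is false in general. A vertex $z$ with $d(z)\approx b\geq d$ can have $r$ far-down neighbors of its own, so $z\in Y$. At the same time it can be chosen by two vertices of degree at least $d(z)^3$ (a tree already gives such a configuration). Your observation that $D_b>b$ only shows that the \emph{parents} of vertices of $X_b$ avoid $X_b$.
\end{itemize}
The fix is to let $P$ be the set of vertices of $Y$ that chose some vertex of $X_b$, as the paper does with $P_j$. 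These vertices have degree at least $D_b>b$, so $P$ is disjoint from $X_b$, and each is designated at most $r$ times. Lemma~\ref{lem:two-parents} then applies, and the rest of your computation goes through unchanged.

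Two smaller points. First, $c(z)$ should count only the chosen edges in which $z$ is the far-down end, as you intend. Second, the $\log d$ factor for $b\leq d^{1/3}$ is unnecessary, since those terms grow geometrically and sum to $O(d^{-1/6})$. It is harmless, because $d^{-1/6}\log d=o(d^{-1/7})$.
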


\begin{proof}
Let $U$ be the set of vertices $y$ with $d(y)\geq d$ which have at least $r$ far-down neighbors. For each vertex of $U$, we choose $r$ far-down neighbors of it. We split the vertices of small degree into dyadic degree classes $X_j$, and we apply Lemma~\ref{lem:two-parents} to each class to bound the number of vertices chosen at least twice.

For $j\geq 0$, put
$$b_j=2^j,\qquad t_j=b_j^{3/2}\qquad\text{and}\qquad d_j=\max\{d,(b_j/2)^3\},$$
and let $X_j$ be the set of vertices $z$ with $b_j/2<d(z)\leq b_j$.
If $y\in U$ has a far-down neighbor $z\in X_j$, then $d(y)\geq d_j$ since $d(y)\geq d$ and $d(y)\geq d(z)^3>(b_j/2)^3$. If $d$ is sufficiently large, then $d_j\geq 1000t_j$ and $d_j>b_j$ for all $j$. Indeed, for $b_j\leq 2d^{1/3}$, we have $d_j\geq d$ while $t_j\leq 3d^{1/2}$. For larger $b_j$, we have $(b_j/2)^3/t_j=b_j^{3/2}/8$, and this is large since $b_j>2d^{1/3}$. In particular, a vertex of $U$ with a far-down neighbor in $X_j$ does not lie in $X_j$.

Fix $j$ and let $X_{j,2}$ be the set of vertices of $X_j$ which are chosen at least twice. Let $P_j$ be the set of vertices of $U$ for which a vertex of $X_{j,2}$ is chosen. For each $x\in X_{j,2}$, we designate two distinct vertices of $U$ for which $x$ is chosen. Each $y\in P_j$ satisfies $d(y)\geq d_j$, and it is designated at most $r$ times since $r$ vertices are chosen for it.

We apply Lemma~\ref{lem:two-parents} to the light graph $G$, with $X_{j,2}$ and $P_j$ playing the roles of $X$ and $P$, with the designated vertices as designated neighbors, and with $t_j$, $d_j$ and $r$ playing the roles of $t$, $d$ and $q$. Its conditions hold. The sets $X_{j,2}$ and $P_j$ are disjoint, as shown above. Every cycle of $G$ contains fewer than $b_j^{3/2}=t_j$ vertices of $X_{j,2}$, since $G$ is light and $d(x)\leq b_j$ for $x\in X_{j,2}$. Finally, $d_j\geq 1000t_j$. So we obtain $\lvert X_{j,2}\rvert\leq 2000t_jrn/d_j$.

Each vertex of $X_j$ is chosen at most $b_j$ times since its degree is at most $b_j$, and the vertices which are chosen exactly once account for at most $n$ choices. Counting the choices gives
$$r\lvert U\rvert\leq n+\sum_{j\geq 0}b_j\lvert X_{j,2}\rvert\leq n+2000rn\sum_{j\geq 0}\frac{b_jt_j}{d_j}.$$
We have $b_jt_j/d_j=b_j^{5/2}/d_j$. The terms with $b_j\leq 2d^{1/3}$ have $d_j\geq d$, and as the numbers $b_j=2^j$ grow geometrically, they sum to $O((2d^{1/3})^{5/2}/d)=O(d^{-1/6})$. The terms with $b_j>2d^{1/3}$ have $d_j\geq b_j^3/8$, so they are at most $8b_j^{-1/2}$, and they sum to $O(d^{-1/6})$. Dividing by $r$, we obtain
$$\lvert U\rvert\leq\frac{n}{r}+O(d^{-1/6})n\leq\Big(\frac1r+d^{-1/7}\Big)n$$
for sufficiently large $d$.
\end{proof}

\section{Near-closed subgraphs of light graphs}\label{sec:structure}

In this section, we prove Theorem~\ref{thm:structure}. The subsections follow steps~\ref{st:window}--\ref{st:choose} of Section~\ref{sec:outline-structure}, and step~\ref{st:regions} occupies Sections~\ref{sec:graphF} and~\ref{sec:regions}. Let $G$ be a light graph on $n$ vertices and let $\delta=\delta(G)$. All degrees in this section are those of $G$. We assume that $\delta$ is sufficiently large. All asymptotic statements below refer to $\delta\to\infty$, and their implicit constants are absolute. Since the number $a$ chosen below satisfies $a\geq\delta$, every $o(1)$ which depends on $a$ also tends to zero as $\delta\to\infty$.

\subsection{Choosing a degree window}\label{sec:window}

We begin by choosing a window $L$ with top $a$ such that $\lvert L\rvert\geq n/(1000\log a)$ and at least $\lvert L\rvert/11$ vertices of $L$ lie in the middle of the window and have at most one neighbor of much smaller degree (see~\eqref{eq:population} and~\eqref{eq:one-low}).

Let $\sigma\in\{0,1,2,3,4\}$. For $j\geq 0$, let $a_j=\delta^{3^{\sigma+5j}}$. The \emph{windows} are the sets
$$L_0=\{v:d(v)\leq a_0\}\qquad\text{and}\qquad L_j=\{v:a_{j-1}<d(v)\leq a_j\}\quad(j\geq 1).$$
The numbers $a_j$ and the windows depend on $\sigma$, which we choose in Claim~\ref{cl:window}, but we do not write $\sigma$ in the notation. We have $a_{j-1}=a_j^{1/243}$ for $j\geq 1$, so for each $\sigma$ the windows partition $V(G)$.

Recall that a far-down neighbor of a vertex $v$ is a neighbor of degree at most $d(v)^{1/3}$. Let $A^*$ be the set of vertices which have at most one far-down neighbor. By Lemma~\ref{lem:down} with $\delta$ and $2$ playing the roles of $d$ and $r$, we have $\lvert A^*\rvert\geq(1/2-\delta^{-1/7})n$. For $j\geq 0$, let
$$A_j=\{v\in A^*:a_j^{1/27}<d(v)\leq a_j^{1/9}\}.$$
Then $A_j\subseteq L_j$, since $a_j^{1/27}>a_j^{1/243}$.
\begin{claim}\label{cl:window}
There are $\sigma$ and $j$ such that $\lvert A_j\rvert\geq\lvert L_j\rvert/11$ and $\lvert L_j\rvert\geq n/(1000\log a_j)$.
\end{claim}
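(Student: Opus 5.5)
The plan is a double-counting (averaging) argument over all pairs $(\sigma,j)$ with $\sigma\in\{0,1,2,3,4\}$ and $j\geq 0$. We compare the total size of the sets $A_j$ with the total size of the windows $L_j$. Write $e=\sigma+5j$, so that $a_j=\delta^{3^e}$. As $(\sigma,j)$ ranges over all pairs, $e$ ranges over all nonnegative integers, each exactly once.

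\emph{Step 1: the sets $A_j$ partition $A^*$.} The condition $a_j^{1/27}<d(v)\leq a_j^{1/9}$ reads $3^{e-3}<\log_\delta d(v)\leq 3^{e-2}$. These intervals are disjoint for distinct $e$. For $e\geq 0$ their union is $(3^{-3},\infty)$. Every vertex has $d(v)\geq\delta$, so $\log_\delta d(v)\geq 1$. Hence each vertex of $A^*$ lies in $A_j$ for exactly one pair $(\sigma,j)$, and
$$\sum_{\sigma,j}\lvert A_j\rvert=\lvert A^*\rvert\geq\Big(\frac12-\delta^{-1/7}\Big)n.$$
\emph{Step 2: the windows.} For each fixed $\sigma$ the windows partition $V(G)$, so $\sum_{\sigma,j}\lvert L_j\rvert=5n$.

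\emph{Step 3: bounding the bad pairs.} Suppose for contradiction that no pair is good. Split the pairs into two types.

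First type: $\lvert A_j\rvert<\lvert L_j\rvert/11$. By Step 2, these pairs contribute less than $5n/11$ to $\sum\lvert A_j\rvert$.

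Second type: $\lvert A_j\rvert\geq\lvert L_j\rvert/11$ but $\lvert L_j\rvert<n/(1000\log a_j)$. Since $A_j\subseteq L_j$, each such pair contributes
$$\lvert A_j\rvert\leq\lvert L_j\rvert<\frac{n}{1000\cdot 3^e\log\delta}.$$
Summing over all $e\geq 0$ gives at most $\frac{3n}{2000\log\delta}$, which is $o(n)$.

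Altogether,
$$\Big(\frac12-\delta^{-1/7}\Big)n\leq\sum_{\sigma,j}\lvert A_j\rvert<\frac{5n}{11}+\frac{3n}{2000\log\delta}.$$
Since $5/11<1/2$, this is false for $\delta$ sufficiently large. So some pair $(\sigma,j)$ satisfies both conditions of the claim.

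The only delicate point is Step 1. One must check that the middle bands of all five shifts together tile the whole range of degrees $\geq\delta$, with no overlaps and no gaps. This is exactly why the shift is by a factor $3$ in the exponent and there are five shifts: the window ratio is $243=3^5$. Everything else is routine summation of a geometric series.
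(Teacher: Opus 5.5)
Your proof is correct and is essentially the paper's argument: the same facts do the work, namely that each vertex of $A^*$ lies in exactly one $A_j$ over all shifts, that $\lvert A^*\rvert\geq(1/2-\delta^{-1/7})n$, and that a geometric series bounds the total size of the small windows. The only difference is that you average over all pairs $(\sigma,j)$ at once, comparing $\lvert A^*\rvert$ with total window size $5n$, whereas the paper first fixes a shift $\sigma$ with $\sum_j\lvert A_j\rvert\geq\lvert A^*\rvert/5$ and then averages over $j$; this changes nothing of substance.
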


\begin{proof}[Proof of Claim~\ref{cl:window}]
Since $d(v)\geq\delta$, each vertex $v$ satisfies $\delta^{3^{i-3}}<d(v)\leq\delta^{3^{i-2}}$ for exactly one integer $i\geq 2$. As $a_j^{1/27}=\delta^{3^{\sigma+5j-3}}$ and $a_j^{1/9}=\delta^{3^{\sigma+5j-2}}$, writing $i=\sigma+5j$ with $\sigma\in\{0,\dots,4\}$ shows that each $v\in A^*$ lies in $A_j$ for exactly one pair $(\sigma,j)$. So the five sums $\sum_j\lvert A_j\rvert$ for $\sigma=0,\dots,4$ add up to $\lvert A^*\rvert$, and we fix $\sigma$ with $\sum_j\lvert A_j\rvert\geq\lvert A^*\rvert/5\geq n/10-\delta^{-1/7}n$.

The windows $L_j$ with $\lvert L_j\rvert<n/(1000\log a_j)$ contain fewer than $n/500$ vertices in total, as $\log a_j\geq 3^{5j}$. As $A_j\subseteq L_j$, the sum of $\lvert A_j\rvert$ over the other windows is at least $n/10-\delta^{-1/7}n-n/500\geq n/11$ for large $\delta$, while their sizes add up to at most $n$. So one of these windows has $\lvert A_j\rvert\geq\lvert L_j\rvert/11$. This proves the claim.
\end{proof}

We fix such $\sigma$ and $j$, and we write
$$L=L_j,\qquad A=A_j,\qquad \lambda=\lvert L\rvert\qquad\text{and}\qquad a=a_j.$$
By Claim~\ref{cl:window} and the definition of $A_j$, we obtain
\begin{equation}\label{eq:population}
\lvert A\rvert\geq\frac{\lambda}{11},\qquad n\leq 1000\lambda\log a\qquad\text{and}\qquad a^{1/27}<d(v)\leq a^{1/9}\ \ (v\in A).
\end{equation}

Let
$$d_0=a^{1/81},\qquad d_1=a^{1/3}\qquad\text{and}\qquad I=\{v:d_0\leq d(v)\leq d_1\}.$$
If $j\geq 1$, then $a_{j-1}=a^{1/243}$, so $L$ consists of the vertices $v$ with $a^{1/243}<d(v)\leq a$. If $j=0$, then $L$ consists of all vertices $v$ with $d(v)\leq a$, and $a^{1/243}<\delta$ since $a=\delta^{3^\sigma}$ with $\sigma\leq 4$. In both cases, each vertex $v$ with $a^{1/243}<d(v)\leq a$ lies in $L$. In particular, each vertex of $I$ lies in $L$ since $d_0>a^{1/243}$, and we call $I$ the \emph{interior} of the window. We have $A\subseteq I$. For $x\in A$, we have $d(x)^{1/3}\geq a^{1/81}=d_0$, so each neighbor of $x$ of degree less than $d_0$ has degree at most $d(x)^{1/3}$. As $x\in A^*$, we obtain the following.
\begin{equation}\label{eq:one-low}
\text{Each vertex of $A$ has at most one neighbor of degree less than $d_0$.}
\end{equation}

The window is divided into five ranges, each spanning a factor $3$ in the exponent (Figure~\ref{fig:window}). The degrees of the vertices of $A$ lie in the middle range, and each of the other ranges has its own role. The range between $a^{1/243}$ and $d_0$ ensures that only $o(\lambda)$ vertices of $I$ are removed in Section~\ref{sec:peeling}, since at most $a^{1/243}n$ edges join $I$ to vertices outside $L\cup U$, where $U$ is the set defined in Section~\ref{sec:graphF} (see~\eqref{eq:outside}). The range between $d_0$ and $a^{1/27}$ gives~\eqref{eq:one-low}. The range between $a^{1/9}$ and $d_1$ is used in Claim~\ref{cl:C2}, and the range between $d_1$ and $a$ is used in Claim~\ref{cl:U}.

\begin{figure}[ht]
\centering
\begin{tikzpicture}[x=1.6cm,y=1cm]
\draw[thick,-{Stealth}] (-0.3,0) -- (6.4,0) node[right,font=\small]{$\log\log d(v)$};
\foreach \x/\lab in {0/{a^{1/243}},1/{a^{1/81}},2/{a^{1/27}},3/{a^{1/9}},4/{a^{1/3}},5/{a}}{
  \draw (\x,0.08)--(\x,-0.08) node[below,font=\footnotesize]{$\lab$};
}
\fill[blue!12] (1,0.15) rectangle (4,1.25);
\fill[red!25] (2,0.15) rectangle (3,0.9);
\draw[blue!60!black,thick] (1,0.15) rectangle (4,1.25);
\draw[red!70!black,thick] (2,0.15) rectangle (3,0.9);
\node[font=\footnotesize,blue!50!black] at (1.5,1.0) {$I$};
\node[font=\footnotesize,red!60!black] at (2.5,0.55) {$A$};
\draw[decorate,decoration={brace,amplitude=5pt}] (0,1.5) -- (5,1.5) node[midway,above=5pt,font=\small]{window $L$ with top $a$};
\node[font=\footnotesize,align=center] at (-0.55,0.8) {degree\\ $\leq a^{1/243}$};
\node[font=\footnotesize,align=center] at (5.9,0.8) {degree\\ $>a$};
\node[font=\footnotesize] at (1,-0.75) {$d_0$};
\node[font=\footnotesize] at (4,-0.75) {$d_1$};
\end{tikzpicture}
\caption{One degree window on a doubly logarithmic scale, with the degree ranges of $I$ and $A$.}
\label{fig:window}
\end{figure}

\subsection{The set \texorpdfstring{$U$}{U} and the graph \texorpdfstring{$F$}{F}}\label{sec:graphF}

Next, we choose a set $U$ of $o(\lambda)$ vertices of degree greater than $a$ and a graph $F$ on $L\cup U$ in which $U$ is independent, such that at most $a^{1/243}n$ edges join the interior $I$ to vertices outside $L\cup U$ (see~\eqref{eq:outside}).

We put $\tau=a^{1/300}$. This is the minimum degree which we obtain in the end. Let $U$ be the set of vertices $v$ with $d(v)>a$ which have at least $\tau$ neighbors in $I$, and let
$$P_0=\{v:d(v)>a\}\setminus U.$$
We use the threshold $\tau$ so that the vertices of $P_0$ and the vertices of large degree removed in Section~\ref{sec:peeling} both have fewer than $\tau$ neighbors in the set $C$ chosen there (Claim~\ref{cl:union}). Then Claim~\ref{cl:C2} treats them together, in one application of Lemma~\ref{lem:two-parents}, so that it also covers vertices of $C$ with one neighbor of each kind.

\begin{claim}\label{cl:U}
$\lvert U\rvert=o(\lambda)$.
\end{claim}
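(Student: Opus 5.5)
We plan to bound $\lvert U\rvert$ via Lemma~\ref{lem:down}, exploiting the fact that vertices of $I$ are far-down neighbors of vertices in $U$. Each $y\in U$ has $d(y)>a$, and each $z\in I$ has $d(z)\leq d_1=a^{1/3}<d(y)^{1/3}$. Hence every neighbor of $y$ in $I$ is a far-down neighbor of $y$. So every vertex of $U$ has degree at least $a$ and at least $\tau$ far-down neighbors.

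Next we apply Lemma~\ref{lem:down} to the light graph $G$, with $a$ in the role of $d$ and $r=\lceil\tau\rceil\geq 2$. Since $a\geq\delta$ is large, the lemma applies, and it gives
$$\lvert U\rvert\leq\Big(\frac1\tau+a^{-1/7}\Big)n\leq 2a^{-1/300}n.$$

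It remains to compare this with $\lambda$. By~\eqref{eq:population}, $n\leq 1000\lambda\log a$. Therefore
$$\lvert U\rvert\leq 2000\,a^{-1/300}\log a\cdot\lambda=o(\lambda),$$
since $a^{-1/300}\log a\to 0$ as $a\geq\delta\to\infty$.

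There is no real obstacle here. The only points to check are that the far-down condition holds strictly, which follows from $d(y)>a$, and that the polynomial saving $a^{-1/300}$ beats the $\log a$ loss coming from~\eqref{eq:population}, which is exactly why the window was chosen with $\lvert L\rvert\geq n/(1000\log a)$.
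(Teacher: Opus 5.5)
Your proposal is correct and follows essentially the same route as the paper: neighbors in $I$ are far-down neighbors of each $y\in U$, so Lemma~\ref{lem:down} with $a$ in the role of $d$ and $r\approx\tau$ gives $\lvert U\rvert\leq(1/\tau+a^{-1/7})n$. The bound $n\leq 1000\lambda\log a$ from~\eqref{eq:population} then turns this into $o(\lambda)$. Your explicit rounding $r=\lceil\tau\rceil$ is just the paper's standing convention of rounding parameters up, made explicit.
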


\begin{proof}[Proof of Claim~\ref{cl:U}]
Let $y\in U$. Each vertex of $I$ has degree at most $d_1=a^{1/3}<d(y)^{1/3}$, so $y$ has at least $\tau$ far-down neighbors. By Lemma~\ref{lem:down} with $a$ and $\tau$ playing the roles of $d$ and $r$, and since $n\leq 1000\lambda\log a$, we obtain
$$\lvert U\rvert\leq\Big(\frac{1}{\tau}+a^{-1/7}\Big)n\leq 1000\big(a^{-1/300}+a^{-1/7}\big)\lambda\log a =o(\lambda).$$
This proves the claim.
\end{proof}

Let $F$ be the graph with vertex set $L\cup U$ whose edges are the edges of $G$ with both ends in $L\cup U$ except the edges with both ends in $U$. We leave out the edges inside $U$ so that the vertices of $F$ of $G$-degree greater than $a$, which lie in $U$, form an independent set, as Lemma~\ref{lem:light-circumference} requires. We put $U$ into $F$ since vertices of $I$ may have many neighbors in $U$. We leave out $P_0$, which may be large.

Each vertex of degree at most $a$ outside $L$ has degree at most $a^{1/243}$, and each vertex of degree greater than $a$ outside $U$ lies in $P_0$ and so has fewer than $\tau=a^{1/300}<a^{1/243}$ neighbors in $I$. So each vertex outside $L\cup U$ has at most $a^{1/243}$ neighbors in $I$, and as $n\leq 1000\lambda\log a$ by~\eqref{eq:population}, we obtain
\begin{equation}\label{eq:outside}
e_G(I,V(G)\setminus(L\cup U))\leq a^{1/243}n\leq 1000a^{1/243}\lambda\log a.
\end{equation}

\subsection{Regions}\label{sec:regions}

The goal of this subsection is to find disjoint regions $R_i$ in $F$ with small separators $S_i$, such that each region has $O(a^2)$ vertices and the regions together contain all but $o(\lambda)$ vertices of $F$.

We put
$$k=2a^{3/2}\qquad\text{and}\qquad m=a^2.$$
The vertices of $F$ of $G$-degree greater than $a$ lie in the independent set $U$, so by Lemma~\ref{lem:light-circumference} with $b=a$, every cycle of $F$ has length less than $k$. For large $\delta$, the following estimates hold.
\begin{equation}\label{eq:ratios}
\frac{\tau^2}{d_0}=o(1),\qquad \frac{\tau a^{1/243}\log a}{d_0}=o(1)\qquad\text{and}\qquad \frac{\tau k}{m}=o(1).
\end{equation}
These estimates hold as $\tau=a^{1/300}$ and $d_0=a^{1/81}$.

We apply Lemma~\ref{lem:regions} to $F$, which is possible as $k\geq 3$. This gives regions $R_1,\dots,R_p$ and separators $S_1,\dots,S_p$. We write $F_i=F[R_i\cup S_i]$. Since $\tau\geq 1$, \eqref{eq:ratios} gives $k/m=o(1)$. As $\lvert F\rvert\leq\lambda+\lvert U\rvert=(1+o(1))\lambda$ by Claim~\ref{cl:U}, we have $\sum_i\lvert S_i\rvert\leq 4k\lambda/m=o(\lambda)$ and
\begin{equation}\label{eq:regions}
\Big\lvert V(F)\setminus\bigcup_{i\in[p]}R_i\Big\rvert=o(\lambda),\qquad\lvert R_i\cup S_i\rvert<2m+2k\qquad\text{and}\qquad\sum_{i\in[p]}\lvert F_i\rvert\leq\lvert F\rvert+\sum_{i\in[p]}\lvert S_i\rvert\leq(1+o(1))\lambda.
\end{equation}
Each edge of $G$ between $L$ and $L\cup U$ lies in $F$, and $N_F(R_i)\subseteq S_i$. Hence the following holds for each $i\in[p]$.
\begin{equation}\label{eq:region-edges}
\text{If $x\in L$ and $y\in L\cup U$ are adjacent and one of them lies in $R_i$, then $xy$ is an edge of $F_i$.}
\end{equation}

\subsection{Removing vertices of small degree in the regions}\label{sec:peeling}

We now remove vertices of small degree in each region, so that the remaining set $V_i$ induces a subgraph of minimum degree at least $\tau$. We then choose a set $C$ which contains all but $o(\lambda)$ of the vertices of $A$ in the union of the regions (Claim~\ref{cl:C}).

In each graph $F_i$, we repeatedly remove a vertex of current degree less than $\tau$ until no such vertex remains. Let $V_i$ be the set of remaining vertices, and let $\overline{V}_i=V(F_i)\setminus V_i$ be the set of vertices removed from $F_i$. If $V_i$ is nonempty, then $F[V_i]$ has minimum degree at least $\tau$, and so does $G[V_i]$. When a vertex of $\overline{V}_i$ is removed, it has fewer than $\tau$ neighbors in $F_i$ which are removed later or lie in $V_i$. So $e(F_i[\overline{V}_i])+e_{F_i}(\overline{V}_i,V_i)\leq\tau\lvert\overline{V}_i\rvert$, and
\begin{equation}\label{eq:removed}
\text{each vertex of $\overline{V}_i$ has fewer than $\tau$ neighbors in $V_i$ in the graph $F_i$.}
\end{equation}

Let $Q_i=I\cap R_i\cap\overline{V}_i$, and let $v\in Q_i$. As $v\in L$, each edge of $G$ from $v$ to $L\cup U$ lies in $F_i$ by~\eqref{eq:region-edges}, and the other edges at $v$ join $v$ to $V(G)\setminus(L\cup U)$. Summing the degrees in $F_i$ of the vertices of $Q_i$ gives
$$\sum_{v\in Q_i}d_{F_i}(v)\leq 2e(F_i[\overline{V}_i])+e_{F_i}(\overline{V}_i,V_i)\leq 2\tau\lvert\overline{V}_i\rvert\leq 2\tau\lvert F_i\rvert.$$
Each vertex of $Q_i$ has $G$-degree at least $d_0$, and the sets $Q_i$ are disjoint subsets of $I$. As $\sum_i\lvert F_i\rvert\leq 2\lambda$ by~\eqref{eq:regions}, we obtain by~\eqref{eq:outside}
$$d_0\sum_{i\in[p]}\lvert Q_i\rvert\leq\sum_{i\in[p]}2\tau\lvert F_i\rvert+e_G(I,V(G)\setminus(L\cup U))\leq 4\tau\lambda+1000a^{1/243}\lambda\log a.$$
Multiplying by $\tau/d_0$, the first two estimates of~\eqref{eq:ratios} give $\tau\sum_i\lvert Q_i\rvert=o(\lambda)$, and so $\sum_i\lvert Q_i\rvert=o(\lambda)$ as $\tau\geq 1$.

Let $C$ be the set of vertices $x\in A$ which lie in some region $R_i$ and in its set $V_i$, and which satisfy the following two conditions.
\begin{enumerate}[label=\upshape(D\arabic*)]
\item\label{ex:sep} $x$ lies in none of the separators $S_1,\dots,S_p$.
\item\label{ex:adj} $x$ has no neighbor in $\overline{V}_i\cap(S_i\cup I)$.
\end{enumerate}
\begin{claim}\label{cl:C}
All but $o(\lambda)$ of the vertices of $A$ in the union of the regions belong to $C$.
\end{claim}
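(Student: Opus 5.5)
We bound separately the vertices of $A$ in the union of the regions that fail membership in $V_i$, fail \ref{ex:sep}, or fail \ref{ex:adj}, and show each of these three sets has size $o(\lambda)$.

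\emph{Not in $V_i$, or failing \ref{ex:sep}.} A vertex $x\in A\cap R_i$ with $x\notin V_i$ lies in $I\cap R_i\cap\overline{V}_i=Q_i$, since $A\subseteq I$. So these vertices number at most $\sum_i\lvert Q_i\rvert=o(\lambda)$, which was shown just above. The vertices of $A$ in some separator number at most $\sum_i\lvert S_i\rvert\leq 4k\lambda/m=o(\lambda)$ by \eqref{eq:ratios}.

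\emph{Failing \ref{ex:adj}.} It remains to count vertices $x\in A\cap R_i\cap V_i$ with a neighbor $w\in\overline{V}_i\cap(S_i\cup I)$. Since $x\in R_i\subseteq L$ and $w\in V(F_i)\subseteq L\cup U$, the edge $xw$ lies in $F_i$ by \eqref{eq:region-edges}. By \eqref{eq:removed}, each $w\in\overline{V}_i$ has fewer than $\tau$ neighbors in $V_i$ within $F_i$. Charging each such $x$ to one such $w$ in its own region $i$, the number of bad $x$ in region $i$ is less than $\tau\lvert\overline{V}_i\cap(S_i\cup I)\rvert$. We have $\overline{V}_i\cap(S_i\cup I)\subseteq S_i\cup Q_i\cup(I\cap S_i)$, since $V(F_i)=R_i\cup S_i$ and $I\cap R_i\cap\overline{V}_i=Q_i$. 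Summing over $i$ (the regions are disjoint, so each $x$ is counted once), the total is at most
$$\tau\sum_i\lvert S_i\rvert+\tau\sum_i\lvert Q_i\rvert\leq \frac{4\tau k\lambda}{m}+\tau\sum_i\lvert Q_i\rvert.$$
The first term is $o(\lambda)$ by the third estimate of \eqref{eq:ratios}. The peeling computation above already gave $\tau\sum_i\lvert Q_i\rvert=o(\lambda)$ before dividing by $\tau$, so the second term is $o(\lambda)$ as well.

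Adding the three bounds proves the claim. The only delicate point is this last step: it needs the stronger bound $\tau\sum_i\lvert Q_i\rvert=o(\lambda)$ rather than merely $\sum_i\lvert Q_i\rvert=o(\lambda)$, together with the bound $\tau k/m=o(1)$ for the separators. Both are available from \eqref{eq:ratios}, and this is exactly why the estimates there are stated with the extra factor $\tau$.
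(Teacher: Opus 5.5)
Your proof is correct and follows the paper's argument essentially step for step: vertices outside $V_i$ lie in $Q_i$, failures of (D1) are bounded by $\sum_i\lvert S_i\rvert$, and failures of (D2) are charged via \eqref{eq:removed} to $\overline{V}_i\cap(S_i\cup I)\subseteq S_i\cup Q_i$, giving at most $\tau\big(\sum_i\lvert S_i\rvert+\sum_i\lvert Q_i\rvert\big)=o(\lambda)$. One small slip is that $R_i$ need not lie in $L$, since regions may contain vertices of $U$; however, $x\in L$ holds anyway because $A\subseteq I\subseteq L$, and that is all \eqref{eq:region-edges} needs.
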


\begin{proof}[Proof of Claim~\ref{cl:C}]
Let $x\in A\cap R_i$ with $x\notin C$. If $x\notin V_i$, then $x\in Q_i$ since $A\subseteq I$. Otherwise, $x$ fails~\ref{ex:sep} or~\ref{ex:adj}. At most $\sum_i\lvert S_i\rvert\leq 4k\lambda/m$ vertices fail~\ref{ex:sep}. Let $y$ be a vertex of $\overline{V}_i\cap(S_i\cup I)$. Then $y\in L\cup U$, as $\overline{V}_i\subseteq V(F)=L\cup U$. If $x\in A\cap R_i\cap V_i$ is adjacent to $y$ in $G$, then $xy$ is an edge of $F_i$ by~\eqref{eq:region-edges}, as $A\subseteq L$. So $y$ has fewer than $\tau$ such neighbors $x$ by~\eqref{eq:removed}. The vertices of $\overline{V}_i\cap I$ lie in $S_i$ or in $Q_i$. So at most $\tau\big(\sum_i\lvert S_i\rvert+\sum_i\lvert Q_i\rvert\big)$ vertices fail~\ref{ex:adj}. As $\tau\sum_i\lvert Q_i\rvert=o(\lambda)$ and by~\eqref{eq:ratios}, all these bounds are $o(\lambda)$. This proves the claim.
\end{proof}

\subsection{Neighbors outside the sets \texorpdfstring{$V_i$}{V\_i}}\label{sec:parentcount}

Our next aim is to show that all but $o(\lambda)$ vertices $x\in C$ have at most two neighbors outside $V_i$, where $R_i$ is the region containing $x$ (Claims~\ref{cl:C2} and~\ref{cl:two-outside}). We define a set $P_1$ of removed vertices of large degree, show that the vertices of $P_0\cup P_1$ have fewer than $\tau$ neighbors in $C$, and apply Lemma~\ref{lem:two-parents} with $P=P_0\cup P_1$.

Let $P_1$ be the set of vertices $v$ with $d(v)>d_1=a^{1/3}$ which lie in $R_i\cap\overline{V}_i$ for some $i$.

\begin{claim}\label{cl:union}
$\lvert N(y)\cap C\rvert<\tau$ for each $y\in P_0\cup P_1$.
\end{claim}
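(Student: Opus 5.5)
We treat the two kinds of vertices separately.

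\emph{Vertices of $P_0$.} By definition, each $y\in P_0$ has degree greater than $a$ and is not in $U$. So $y$ has fewer than $\tau$ neighbors in $I$. Since $C\subseteq A\subseteq I$, this gives $\lvert N(y)\cap C\rvert<\tau$ at once.

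\emph{Vertices of $P_1$.} Let $y\in P_1$, so $d(y)>d_1$ and $y\in R_i\cap\overline{V}_i$ for some $i$. We show that every neighbor $x\in C$ of $y$ lies in $V_i$ and that the edge $xy$ lies in $F_i$. Then~\eqref{eq:removed} bounds the number of such $x$ by $\tau$.

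Take $x\in C$ adjacent to $y$. By the definition of $C$, $x$ lies in some region $R_{i'}$ and in $V_{i'}$.

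\begin{enumerate}[label=\upshape(\roman*)]
\item First, $y\in V(F)=L\cup U$. As $x\in A\subseteq L$ and $y\in R_i$, the statement~\eqref{eq:region-edges} shows that $xy$ is an edge of $F_i$. So $x\in R_i\cup S_i$.
\item By~\ref{ex:sep}, $x$ lies in no separator, so $x\notin S_i$. Hence $x\in R_i$.
\item The regions are disjoint, so $i'=i$, and therefore $x\in V_i$.
\end{enumerate}

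Thus every neighbor of $y$ in $C$ is a neighbor of $y$ in $V_i$ in the graph $F_i$. Since $y\in\overline{V}_i$, statement~\eqref{eq:removed} says $y$ has fewer than $\tau$ such neighbors, which proves the claim for $P_1$.

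We did not use condition~\ref{ex:adj} or the degree bound $d(y)>d_1$; these matter for $y\in\overline{V}_i\cap(S_i\cup I)$, which are treated elsewhere. The only delicate step is (i)–(iii): getting $x$ into $R_i$ itself rather than just $R_i\cup S_i$. This uses~\ref{ex:sep} together with the disjointness of the regions.
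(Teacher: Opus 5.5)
Your proof is correct and follows the paper's argument step by step. The $P_0$ case is immediate from $C\subseteq I$. For $y\in P_1\cap R_i$, you use \eqref{eq:region-edges}, then \ref{ex:sep}, then the disjointness of the regions to place every $x\in N(y)\cap C$ in $V_i$ as an $F_i$-neighbor of $y$, and \eqref{eq:removed} finishes it. One small correction to your closing aside: the bound $d(y)>d_1$ is not about vertices of $\overline{V}_i\cap(S_i\cup I)$; it is needed later in Claim~\ref{cl:C2}.
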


\begin{proof}[Proof of Claim~\ref{cl:union}]
If $y\in P_0$, this holds since $y$ has fewer than $\tau$ neighbors in $I\supseteq C$. Suppose $y\in P_1\cap R_i$ for some $i$. Let $x\in N(y)\cap C$. As $x\in L$ and $y\in R_i\subseteq L\cup U$, the edge $xy$ lies in $F_i$ by~\eqref{eq:region-edges}, so $x\in R_i\cup S_i$. By~\ref{ex:sep}, the vertex $x\in C$ does not lie in $S_i$, so $x\in R_i$. As $x\in C$ and the regions are disjoint, it lies in $V_i$. So $x$ is a neighbor of $y$ in $V_i$ in the graph $F_i$, and $y$ has fewer than $\tau$ such neighbors by~\eqref{eq:removed}. This proves the claim.
\end{proof}

Figure~\ref{fig:regions} illustrates~\ref{ex:adj} and Claim~\ref{cl:union}. Let $C_2$ be the set of vertices of $C$ with at least two neighbors in $P_0\cup P_1$.

\begin{claim}\label{cl:C2}
$\lvert C_2\rvert=o(\lambda)$.
\end{claim}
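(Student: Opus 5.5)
We apply Lemma~\ref{lem:two-parents} to the graph $G$ itself (at most $n$ vertices), with $X=C_2$ and $P=P_0\cup P_1$. For each $x\in C_2$ we fix two distinct neighbors $y_x,y'_x\in P_0\cup P_1$ as its designated neighbors. By Claim~\ref{cl:union}, each $y\in P_0\cup P_1$ has fewer than $\tau$ neighbors in $C$, so it is designated for fewer than $\tau$ vertices of $C_2$, and we may take $q=\tau$.

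For the degree condition, each vertex of $P_0\cup P_1$ has degree greater than $d_1=a^{1/3}$: vertices of $P_0$ have degree greater than $a$, and vertices of $P_1$ have degree greater than $d_1$ by definition. So we take $d=a^{1/3}$.

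Disjointness holds since $C\subseteq A$ consists of vertices of degree at most $a^{1/9}<a^{1/3}$, while $P_0\cup P_1$ consists of vertices of degree greater than $a^{1/3}$. This is where the range between $a^{1/9}$ and $d_1$ is used.

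For the cycle condition, every vertex of $C$ has degree at most $b=a^{1/9}$. Since $G$ is light, every cycle of $G$ contains fewer than $b^{3/2}=a^{1/6}$ vertices of $C_2$, so we may take $t=a^{1/6}$. Then $d=a^{1/3}\geq 1000t=1000a^{1/6}$ holds for large $\delta$.

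Lemma~\ref{lem:two-parents} now gives
$$\lvert C_2\rvert\leq\frac{2000\,a^{1/6}\,a^{1/300}\,n}{a^{1/3}}.$$
Using $n\leq 1000\lambda\log a$ from~\eqref{eq:population}, this is at most $2\cdot10^6\,a^{-1/6+1/300}\lambda\log a=o(\lambda)$.

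The only delicate point is checking that the exponents balance, that is, that $t\,q/d=a^{1/6+1/300-1/3}$ beats the $\log a$ loss from~\eqref{eq:population}. It does, with plenty of room. No further obstacle is expected, since Claim~\ref{cl:union} already supplies the bound $q$.
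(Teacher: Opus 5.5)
Your proposal is correct and matches the paper's proof essentially step for step. Like the paper, you apply Lemma~\ref{lem:two-parents} to $G$ with $X=C_2$, $P=P_0\cup P_1$, $t=a^{1/6}$, $q=\tau$ (via Claim~\ref{cl:union}) and $d=d_1$, check disjointness through $a^{1/9}<d_1$, and arrive at the same bound $2\cdot 10^6a^{1/6+1/300-1/3}\lambda\log a=o(\lambda)$.
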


\begin{proof}[Proof of Claim~\ref{cl:C2}]
Let $t=a^{1/6}$. We apply Lemma~\ref{lem:two-parents} to the light graph $G$ with $C_2$, $P_0\cup P_1$, $\tau$ and $d_1$ playing the roles of $X$, $P$, $q$ and $d$. The sets $C_2$ and $P_0\cup P_1$ are disjoint since $a^{1/9}<d_1$. Each vertex of $C_2$ has degree at most $a^{1/9}$ by~\eqref{eq:population}, so every cycle of $G$ contains fewer than $(a^{1/9})^{3/2}=t$ vertices of $C_2$, as $G$ is light. We designate two neighbors in $P_0\cup P_1$ of each vertex of $C_2$. Each vertex of $P_0\cup P_1$ has degree greater than $d_1$. By Claim~\ref{cl:union}, it has fewer than $\tau$ neighbors in $C_2$, so it is designated fewer than $\tau$ times. The condition $d_1\geq 1000t$ holds for large $\delta$. As $n\leq 1000\lambda\log a$, we obtain
$$\lvert C_2\rvert\leq\frac{2000t\tau n}{d_1}\leq 2\cdot 10^6a^{1/6+1/300-1/3}\lambda\log a =o(\lambda).$$
This proves the claim.
\end{proof}

We discard $C_2$ from $C$ and write $C'=C\setminus C_2$.

\begin{figure}[ht]
\centering
\begin{tikzpicture}[scale=0.8]
\draw[rounded corners=8pt,fill=gray!6] (0,0) rectangle (11,3.2);
\draw[rounded corners=8pt,fill=blue!6] (0.3,0.3) rectangle (4.9,2.9);
\draw[rounded corners=8pt,fill=blue!6] (6.1,0.3) rectangle (10.7,2.9);
\node[font=\small] at (2.6,2.6) {$R_i$};
\node[font=\small] at (8.4,2.6) {$R_{i'}$};
\draw[rounded corners=4pt,fill=orange!20] (5.0,0.5) rectangle (6.0,2.7);
\node[font=\small] at (5.5,2.95) {$S_i$};
\draw[dashed,red!70!black,rounded corners=6pt] (0.8,0.6) rectangle (4.1,2.2);
\node[font=\footnotesize,red!60!black] at (1.5,1.95) {$V_i$};
\node[bigvtx,label={[font=\footnotesize]below:$x$}] (x) at (2.6,1.2) {};
\node[circle,draw=blue!70!black,thick,fill=white,inner sep=1.5pt,label={[font=\footnotesize]right:$y$}] (p) at (4.5,1.6) {};
\node[circle,draw=black,thick,fill=white,inner sep=1.5pt,label={[font=\footnotesize]right:$y'$}] (s) at (5.4,1.0) {};
\node[bigvtx,label={[font=\footnotesize]below:$x'$}] (x2) at (3.6,0.95) {};
\draw (x)--(p); \draw (x2)--(s);
\node[font=\footnotesize,align=left,anchor=west] at (11.2,2.3) {$y\in P_1$ is removed,\\ $\lvert N(y)\cap C\rvert<\tau$};
\node[font=\footnotesize,align=left,anchor=west] at (11.2,0.8) {$y'\in S_i$ is removed,\\ so $x'\notin C$};
\end{tikzpicture}
\caption{Regions after the removal of vertices of small degree. Open circles are removed vertices, and $x\in C$.}
\label{fig:regions}
\end{figure}

\begin{claim}\label{cl:two-outside}
For each $i$, each vertex $x\in C'\cap R_i$ has at most two neighbors outside $V_i$.
\end{claim}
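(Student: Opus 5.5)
Fix $i$ and $x\in C'\cap R_i$, and let $y$ be a neighbor of $x$ outside $V_i$. The plan is to classify $y$ by its degree and by where it sits relative to $L\cup U$ and the region. We will show that every such $y$ is either a neighbor of degree less than $d_0$, or lies in $P_0\cup P_1$. Then~\eqref{eq:one-low} allows at most one neighbor of the first kind. Since $x\in C'$ means $x\notin C_2$, there is also at most one neighbor of the second kind. Together this gives at most two.

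Assume $d(y)\geq d_0$. There are three cases.

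\emph{Case $d(y)>a$.} Then $y\in U$ or $y\in P_0$. Suppose $y\in U$. Since $x\in L$ and $x\in R_i$, the edge $xy$ lies in $F_i$ by~\eqref{eq:region-edges}, so $y\in R_i\cup S_i$. As $y\notin V_i$, we get $y\in\overline V_i$. If $y\in S_i$, this contradicts~\ref{ex:adj}. Hence $y\in R_i\cap\overline V_i$, and since $d(y)>a>d_1$, we have $y\in P_1$. In either subcase $y\in P_0\cup P_1$.

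\emph{Case $d_0\leq d(y)\leq d_1$.} Then $y\in I\subseteq L$. By~\eqref{eq:region-edges}, $xy\in E(F_i)$ and $y\in V(F_i)$. So $y\in\overline V_i\cap I$, which contradicts~\ref{ex:adj}. This case cannot occur.

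\emph{Case $d_1<d(y)\leq a$.} We have $d(y)>a^{1/243}$, so $y\in L$. Again $y\in V(F_i)\setminus V_i=\overline V_i$. If $y\in S_i$, this contradicts~\ref{ex:adj}. Otherwise $y\in R_i\cap\overline V_i$ with $d(y)>d_1$, so $y\in P_1$.

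The only delicate point is being careful that every $y$ of degree at least $d_0$ really lies in $L\cup U$ or in $P_0$, so that~\eqref{eq:region-edges} applies. This holds because degrees above $a^{1/243}$ and at most $a$ lie in $L$, and degrees above $a$ lie in $U\cup P_0$ by definition. It also matters that $x\in R_i\cap V_i$ is in the same region as $y$; this is exactly what~\eqref{eq:region-edges} uses.
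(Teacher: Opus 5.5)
Your proof is correct and follows the paper's argument. You show that every neighbor $y\notin V_i$ of $x$ with $d(y)\geq d_0$ lies in $P_0$, or else, via \eqref{eq:region-edges} and \ref{ex:adj}, lies in $R_i\cap\overline{V}_i$ with $d(y)>d_1$ and hence in $P_1$; then \eqref{eq:one-low} and $x\notin C_2$ give the bound of two. The only difference is cosmetic: you split into three degree ranges, whereas the paper treats all $y\notin P_0$ with $d(y)\geq d_0$ at once and uses \ref{ex:adj} to exclude $S_i$ and $I$ in a single step.
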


\begin{proof}[Proof of Claim~\ref{cl:two-outside}]
See Figure~\ref{fig:steps}(4). Note that $x\in V_i$ by the definition of $C$. Let $y$ be a neighbor of $x$ outside $V_i$ with $d(y)\geq d_0$ and $y\notin P_0$. If $d(y)\leq a$, then $y\in L$ as $d(y)\geq d_0=a^{1/81}>a^{1/243}$, and otherwise $y\in U$ as $y\notin P_0$. So $xy$ is an edge of $F_i$ by~\eqref{eq:region-edges} and $y\in R_i\cup S_i$. As $y\notin V_i$, we have $y\in\overline{V}_i$. By~\ref{ex:adj}, the vertex $y$ lies neither in $S_i$ nor in $I$. So $y\in R_i$, and $d(y)>d_1$ as $d(y)\geq d_0$ and $y\notin I$. Hence $y\in P_1$.

Thus each neighbor of $x$ outside $V_i$ has degree less than $d_0$ or lies in $P_0\cup P_1$. By~\eqref{eq:one-low}, at most one neighbor of $x$ has degree less than $d_0$, and as $x\notin C_2$, at most one lies in $P_0\cup P_1$. This proves the claim.
\end{proof}

\subsection{Choosing one region}\label{sec:choose}

Finally, we choose one region $R_i$ and show that $S=V_i$ has the properties of Theorem~\ref{thm:structure}. Recall that $k=2a^{3/2}$ and $m=a^2$.

By~\eqref{eq:population} and~\eqref{eq:regions}, at least $\lambda/11-o(\lambda)$ vertices of $A$ lie in the union of the regions. By Claims~\ref{cl:C} and~\ref{cl:C2}, all but $o(\lambda)$ of them lie in $C'$, so $\lvert C'\rvert\geq\lambda/11-o(\lambda)$.

As $C'$ lies in the union of the regions and $\lvert C'\rvert\geq\lambda/11-o(\lambda)\geq\sum_i\lvert F_i\rvert/12$ for large $\delta$ by~\eqref{eq:regions}, some region $R_i$ which contains a vertex of $C'$ satisfies $\lvert C'\cap R_i\rvert\geq\lvert F_i\rvert/12$. We put $S=V_i$. Each vertex of $C'\cap R_i$ lies in $V_i$, so $S$ is nonempty and
$$\lvert S\rvert\leq\lvert F_i\rvert<2m+2k\leq a^3.$$
By Section~\ref{sec:peeling}, $G[S]$ has minimum degree at least $\tau=a^{1/300}$, and $\tau\geq\lvert S\rvert^{1/1000}$ as $\lvert S\rvert<a^3$. Moreover, at least $\lvert C'\cap R_i\rvert\geq\lvert F_i\rvert/12\geq\lvert S\rvert/12$ vertices of $S$ have at most two neighbors outside $S$ by Claim~\ref{cl:two-outside}. This completes the proof of Theorem~\ref{thm:structure}.

\section{Graphs of polynomial minimum degree}\label{sec:qoh}

In this section, we prove Theorem~\ref{thm:qoh}. We first define robust sublinear expanders and state the results of Buci\'c and Montgomery and of Conlon, Fox and Sudakov that we use.

The following notion of robust sublinear expansion is a special case of a notion of Buci\'c and Montgomery~\cite[Definition~11]{bm}. It is a robust form of the sublinear expansion of Koml\'os and Szemer\'edi~\cite{ks94}, which has found many applications (see, for example,~\cite{gfl,letzter,lm17,lm23}). Similar robust notions were used by Haslegrave, Kim and Liu~\cite{hkl} and by Sudakov and Tomon~\cite{st}.

\begin{definition}\label{def:expander}
Let $s\geq 0$. A graph $J$ on $m\geq 2$ vertices is an \emph{$s$-expander} if
$$\lvert N_{J-E'}(X)\rvert\geq\frac{\lvert X\rvert}{32\log^2 m}$$
for every $X\subseteq V(J)$ with $1\leq\lvert X\rvert\leq 2m/3$ and every $E'\subseteq E(J)$ with $\lvert E'\rvert\leq s\lvert X\rvert$.
\end{definition}

Every $s$-expander has minimum degree greater than $s$, as otherwise $X=\{v\}$ for a vertex $v$ of degree at most $s$ and the set $E'$ of edges at $v$ violate Definition~\ref{def:expander}.

Statements~\ref{bm:paths} and~\ref{bm:split} below follow from~\cite[Corollary~22]{bm} and~\cite[Lemma~14]{bm}, and statement~\ref{bm:bulk} is~\cite[Theorem~24]{bm}.

\begin{theorem}[Buci\'c and Montgomery~\cite{bm}]\label{thm:bm}
The following statements hold.
\begin{enumerate}[label=\upshape(\roman*)]
\item\label{bm:paths} Every graph on $m$ vertices can be decomposed into at most $m$ paths.
\item\label{bm:split} Let $s\geq 0$ be an integer. For every graph $M$ on $m$ vertices, we can remove at most $4sm\log m$ edges of $M$ so that the remaining edges can be partitioned into $s$-expanders whose orders sum to at most $2m$.
\item\label{bm:bulk} There is an absolute constant $\beta$ such that every graph on $m\geq 2$ vertices can be decomposed into at most $6m$ cycles and at most $\beta m\log^{274}m$ edges.
\end{enumerate}
\end{theorem}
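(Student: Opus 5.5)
Statement~\ref{bm:bulk} is exactly~\cite[Theorem~24]{bm}, so nothing needs to be done there beyond matching notation: their decomposition into $O(m)$ cycles and $O(m\log^{O(1)}m)$ edges is stated with explicit constants $6$ and exponent $274$. For the other two statements, I will deduce them from the cited results. Where possible, I will give a short self-contained argument so that the specialisation of the parameters is transparent.

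For statement~\ref{bm:paths}, I would not even need~\cite[Corollary~22]{bm}. By Lov\'asz's theorem~\cite{lovasz}, every graph on $m$ vertices decomposes into at most $m/2$ paths and cycles. Deleting one edge from each cycle turns it into a path plus a single edge, and a single edge is itself a path. So each piece becomes at most two paths, and we obtain at most $m$ paths in total.

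For statement~\ref{bm:split}, the plan is to check that~\cite[Definition~11]{bm}, with its expansion function specialised to $\lvert X\rvert/(32\log^2 m)$ and robustness parameter $s$, is the notion in Definition~\ref{def:expander}. Then~\cite[Lemma~14]{bm} applies directly. If one prefers to see why such a lemma holds, the argument is a standard iterative splitting with the following steps.
\begin{enumerate}[label=\upshape(\arabic*)]
\item Start with the single part $M$.
\item While some current part $J$ is not an $s$-expander, take a witness $X$ with $\lvert X\rvert\leq 2\lvert J\rvert/3$ and an edge set $E'$ with $\lvert E'\rvert\leq s\lvert X\rvert$ such that $\lvert N_{J-E'}(X)\rvert<\lvert X\rvert/(32\log^2\lvert J\rvert)$.
\item Delete $E'$, and split $J$ into $J[X\cup N]$ and $J-X$, where $N=N_{J-E'}(X)$. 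These two parts overlap only in $N$.
\end{enumerate}
Two quantities have to be controlled.
\begin{itemize}
\item \emph{Total order.} Each split increases the sum of the orders by $\lvert N\rvert$, which is a $1/(32\log^2)$-fraction of the smaller side. Charging this to the vertices of $X$, a vertex is charged only when it lies in the smaller side. Its part then shrinks by a constant factor, so each vertex is charged $O(\log m)$ times. The total increase is therefore at most $O(\log m)\cdot m/(32\log^2 m)\leq m$ after tuning the constants, giving a sum of orders at most $2m$.
\item \emph{Deleted edges.} The removed edges are $s\lvert X\rvert$ per split, and the same charging gives at most $4sm\log m$ in total.
\end{itemize}

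The main obstacle is the bookkeeping in this splitting argument: the logarithm refers to the current part's order rather than to $m$, and a vertex in $N$ is duplicated, so the charging must be set up carefully. This is precisely what~\cite[Lemma~14]{bm} handles, so in the paper I would simply cite it after verifying that the parameters match.
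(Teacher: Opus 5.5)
Your proposal is correct. For statements~\ref{bm:split} and~\ref{bm:bulk} it matches what the paper does. The paper gives no argument beyond noting that Definition~\ref{def:expander} is a special case of \cite[Definition~11]{bm}, and then cites \cite[Lemma~14]{bm} and \cite[Theorem~24]{bm}. Its only extra remark is that the one-vertex expanders allowed in \cite[Lemma~14]{bm} have no edges and can be dropped. For statement~\ref{bm:paths} the paper cites \cite[Corollary~22]{bm}. You instead argue directly from Lov\'asz's theorem, which the paper already quotes in the introduction: cutting each cycle of a decomposition into at most $\lfloor m/2\rfloor$ paths and cycles into a path and a single edge gives at most $2\lfloor m/2\rfloor\le m$ paths. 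This is valid and removes the dependence on that particular corollary.

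Your optional sketch of~\ref{bm:split} is, as you suspected, not correct as written. It can be completed with the stated constants, although these are fixed and cannot be tuned.
\begin{itemize}
\item The charge at a split of a part $J$ is $\lvert N\rvert/\lvert X\rvert<1/(32\log^2\lvert J\rvert)$. This is of constant size once $J$ is small, so the total is not $O(\log m)\cdot m/(32\log^2 m)$.
\item $X$ need not be the smaller side.
\item The two parts $J[X\cup N]$ and $J-X$ share the edges inside $N$. Instead, split $J-E'$ into the edges meeting $X$, which form a graph on $X\cup N$, and $J-X-E'$.
\end{itemize}
Here is how to complete the charging.
\begin{itemize}
\item Since $\lvert X\cup N\rvert\le\frac23\cdot\frac{33}{32}\lvert J\rvert<0.7\lvert J\rvert$, a fixed vertex copy moves to a part smaller by a factor $0.7$ whenever it lies in $X$. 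So if it lies in $X$ at splits of parts of orders $x_1>\dots>x_k\ge 2$, then $x_{i+1}<0.7x_i$. Hence $k\le 2\log m$.
\item The same bound gives $\log x_{k-j}\ge 1+j\log\frac{10}{7}$. So this copy's total charge is at most $\frac1{32}\sum_{j\ge0}(1+j\log\frac{10}{7})^{-2}\le\frac1{32}(1+1/\log\frac{10}{7})<\frac1{10}$.
\item The new copies of $N$ can be charged later too. So the total number $T$ of copies satisfies $T\le m+T/10$, and the orders sum to at most $\frac{10}{9}m\le 2m$.
\item Charge the $\lvert E'\rvert\le s\lvert X\rvert$ deleted edges as $s$ per copy in $X$. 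This bounds the number of deleted edges by $s\cdot T\cdot 2\log m\le 4sm\log m$.
\end{itemize}
Since you cite \cite[Lemma~14]{bm} in the end, this does not affect the validity of your proposal.
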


In~\cite[Lemma~14]{bm}, a graph with one vertex is also regarded as an expander. Such graphs have no edges, so statement~\ref{bm:split} does not include them.

A graph is \emph{$(\rho,c)$-sparse} if every set $X$ of vertices spans at most $c\lvert X\rvert^{2-\rho}$ edges. This property is inherited by subgraphs. Statements~\ref{cfs:dec} and~\ref{cfs:random} below are~\cite[Corollary~4.3]{cfs} and~\cite[Lemma~4.4]{cfs}.

\begin{theorem}[Conlon, Fox and Sudakov~\cite{cfs}]\label{thm:cfs}
The following statements hold.
\begin{enumerate}[label=\upshape(\roman*)]
\item\label{cfs:dec} For $0<\rho<1\leq c$, there is $b=b(\rho,c)$ such that every $(\rho,c)$-sparse graph $M$ on $m$ vertices can be decomposed into at most $bm$ cycles and edges.
\item\label{cfs:random} Let $0<\rho<1/2$. With probability $1-o(1)$ as $m\to\infty$, uniformly in $p\leq m^{-\rho}$, the random graph $G(m,p)$ is $(\rho,2/\rho)$-sparse.
\end{enumerate}
\end{theorem}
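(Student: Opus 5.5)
Theorem~\ref{thm:cfs} is quoted from~\cite{cfs}. If I had to reprove it, I would treat the two parts separately. I would start with~\ref{cfs:random}, which is a first-moment computation, and then derive~\ref{cfs:dec} from a long-cycle lemma for sparse graphs combined with the Erd\H{o}s--Gallai idea of removing long cycles.

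For~\ref{cfs:random}, put $c=2/\rho$ and $p\leq m^{-\rho}$. A set $X$ of size $x$ with $x^\rho\leq 2c$ spans at most $x^2/2\leq cx^{2-\rho}$ edges, so it never violates sparsity. For larger $x$, the probability that a fixed $X$ spans at least $e=cx^{2-\rho}$ edges is at most
$$\binom{x^2/2}{e}p^e\leq\Big(\frac{\mathrm{e}\,x^2p}{2e}\Big)^e\leq\Big(\frac{\mathrm{e}\rho}{4}\Big(\frac{x}{m}\Big)^{\rho}\Big)^{(2/\rho)x^{2-\rho}}.$$
The base is at most $\mathrm{e}\rho/4<1$ because $\rho<1/2$. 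The number of sets $X$ is at most $(\mathrm{e}m/x)^x$. Combining, the exponent of the bad event is about $-2x^{2-\rho}\log(m/x)-\Omega(x^{2-\rho})$, while the count contributes $x\log(\mathrm{e}m/x)$. Since $x^{1-\rho}\to\infty$, the first term dominates. Summing over $x$ then gives failure probability $o(1)$, uniformly in $p\leq m^{-\rho}$, because only the upper bound on $p$ was used.

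For~\ref{cfs:dec}, the key lemma I would aim for is the following. There is $\gamma=\gamma(\rho)>0$ such that every $(\rho,c)$-sparse graph of average degree at least $d$, with $d$ large, contains a cycle of length at least $d^{1+\gamma}$. To prove it, I would first pass to a subgraph $M'$ of minimum degree at least $d/2$. Then I would show that small sets expand in $M'$. For $X\subseteq V(M')$, the set $X\cup N(X)$ spans at least $d\lvert X\rvert/4$ edges. Sparsity then gives $c\lvert X\cup N(X)\rvert^{2-\rho}\geq d\lvert X\rvert/4$. So as long as $\lvert X\rvert\leq d^{1/(1-\rho)-\eta}$, we get $\lvert N(X)\rvert\geq 2\lvert X\rvert$. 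This vertex expansion, fed into P\'osa's rotation--extension technique applied to a longest path, should give a path of length polynomially larger than $d$ together with many closing edges. Closing one of them gives the desired cycle, of length roughly $d^{1/(1-\rho)-\eta}=d^{1+\gamma}$. I expect this step to be the main obstacle. The difficulty is turning the expansion into a long \emph{cycle} rather than merely a long path, and controlling the dependence of $\gamma$ and the threshold for $d$ on $\rho$ and $c$.

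Given the lemma, the decomposition proceeds in phases indexed by $d=2^j$, going downwards. While the current graph has average degree at least $d$, we remove a longest cycle, which has length at least $d^{1+\gamma}$ by the lemma, since sparsity passes to subgraphs. During the phase at most $dm$ edges are removed, so it uses at most $m d^{-\gamma}$ cycles. Summing over the geometric sequence $d=2^j$ gives $O(m)$ cycles. Once the average degree falls below a constant $d^*(\rho,c)$, the remaining $O(m)$ edges are recorded as single edges. Together this gives $bm$ cycles and edges.
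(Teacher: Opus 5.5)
The paper does not prove this theorem. It quotes~(i) and~(ii) from \cite{cfs}, so your proposal has to stand on its own. Part~(ii) does: the union bound over sets with $|X|^{\rho}>2c$ is correct, and it is uniform in $p\le m^{-\rho}$. In part~(i), the expansion estimate $|N(X)|\ge 2|X|$ for $|X|\le k$, with $k$ of order $(d/c)^{1/(1-\rho)}$, in a subgraph $M'$ of minimum degree $d/2$ is correct. So is the dyadic count of cycle removals. The gap is the step you flag yourself, and your sketch of it does not work as written. A long path says nothing about cycles, and a closing edge at a rotation endpoint $w$ may go to a vertex close to $w$ on the path, so it can give a short cycle.

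The missing argument is short. Suppose every $X$ with $|X|\le k$ satisfies $|N(X)|\ge 2|X|$, but every cycle has length at most $k$.
\begin{itemize}
\item Take a longest path $x_0x_1\cdots x_\ell$, and rotate at the $x_0$-end while keeping $x_\ell$ fixed.
\item A rotation of a path $y_0\cdots y_\ell$ with pivot $y_i$ uses the edge $y_0y_i$. This edge closes the cycle $y_0\cdots y_iy_0$ of length $i+1\le k$, so $i\le k-1$, and the rotation only permutes the first $i$ positions.
\item By induction, every rotated path agrees with the original one from position $k$ on. So the set $S$ of all endpoints obtained lies in $\{x_0,\dots,x_{k-1}\}$, and $|S|\le k$.
\item P\'osa's lemma gives $N(S)\subseteq S^-\cup S^+$, where $S^-$ and $S^+$ are the predecessors and successors of $S$ on the original path. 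As $x_0\in S$ has no predecessor, $|N(S)|\le 2|S|-1$, contradicting the expansion.
\end{itemize}
Hence $M'$ has a cycle of length at least $k+1$. This is at least $d^{1+\gamma}$ for any $\gamma<\rho/(1-\rho)$ and large $d$, which is your key lemma, and the rest of your argument then goes through.

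Alternatively, Lemma~\ref{lem:regions} of the paper does the same job. Suppose every cycle of $M'$ is shorter than $k$, and apply the lemma with $2k$ in the role of $m$. Since $\sum_i|S_i|\le|M'|\le 2\sum_i|R_i|$, some nonempty region satisfies $|R_i\cup S_i|\le 3|R_i|$ and $|R_i\cup S_i|<6k$. All neighbours of $R_i$ lie in $R_i\cup S_i$, so this set spans at least $d|R_i\cup S_i|/12$ edges. Sparsity then forces $k>\frac16\bigl(d/(12c)\bigr)^{1/(1-\rho)}$.
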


\subsection{Random subgraphs and routing}\label{sec:routing}

We next prove three lemmas on robust expanders, one on random subgraphs, one on short paths and one on routing. Lemma~\ref{lem:split} is a variant of~\cite[Lemma~15]{bm} with a short direct proof. Lemma~\ref{lem:core} keeps a robust core of an expander after the removal of some edges, an idea used by Alon and Capalbo~\cite{ac} for disjoint paths in expanders.

\begin{lemma}\label{lem:split}
Let $J$ be an $m$-vertex $s$-expander, and let $h$ be a positive integer with $s\geq 64h\log m$. If we keep each edge of $J$ independently with probability $1/h$, then the resulting spanning subgraph is an $s/(2h)$-expander with probability at least $1-m^{-5}$.
\end{lemma}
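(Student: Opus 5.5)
We prove Lemma~\ref{lem:split} by a union bound over all pairs $(X,E')$, organized by the boundary structure of $X$ rather than by $E'$ directly. Let $J'$ be the random subgraph, and fix $X$ with $1\leq\lvert X\rvert\leq 2m/3$. Write $x=\lvert X\rvert$ and $\ell=x/(32\log^2 m)$. We must show that, with high probability, for every $E'\subseteq E(J')$ with $\lvert E'\rvert\leq s x/(2h)$ we have $\lvert N_{J'-E'}(X)\rvert\geq\ell$.

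Suppose this fails for $X$. Then there is a set $Y\subseteq V(J)\setminus X$ with $\lvert Y\rvert<\ell$ such that all edges of $J'$ from $X$ to $V(J)\setminus(X\cup Y)$ lie in $E'$. Hence $e_{J'}(X,V(J)\setminus(X\cup Y))\leq sx/(2h)$. On the other hand, apply the expansion of $J$ with the set $X$ and the edge set $E^*$ of all $J$-edges from $X$ to $W=V(J)\setminus(X\cup Y)$. If $\lvert E^*\rvert\leq sx$, then $N_{J-E^*}(X)\subseteq Y$, which has fewer than $\ell$ vertices, contradicting that $J$ is an $s$-expander. So $e_J(X,W)>sx$. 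Each of these edges is kept independently with probability $1/h$, so the number kept is binomial with mean greater than $sx/h$. By Chernoff's bound with $\theta=1/2$, it is at most $sx/(2h)$ with probability at most $\exp(-sx/(8h))\leq\exp(-8x\log m)=m^{-8x/\ln 2\cdot\ln 2}$. More precisely, $s/(8h)\geq 8\log m$, so the bound is at most $2^{-8x\log m}=m^{-8x}$.

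For the union bound, there are at most $\binom{m}{x}\leq m^{x}$ choices of $X$ of size $x$ and at most $\sum_{y<\ell}\binom{m}{y}\leq m^{\ell}\leq m^{x}$ choices of $Y$. So the failure probability for size $x$ is at most $m^{2x}m^{-8x}=m^{-6x}$. Summing over $x\geq 1$ gives at most $2m^{-6}\leq m^{-5}$ for $m\geq 2$. The only point needing care is the reduction from arbitrary $E'$ to the boundary pair $(X,Y)$. The obstacle is avoiding a union bound over the sets $E'$, which are far too many. The witness $Y=N_{J'-E'}(X)$ removes this dependence, and it also turns the robust expansion of $J$ into a lower bound on $e_J(X,W)$ for every small $Y$.
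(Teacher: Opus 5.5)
Your proof is correct and is essentially the paper's argument: the witness $Y=N_{J'-E'}(X)$ reduces everything to a union bound over pairs $(X,Y)$ with $\lvert Y\rvert<\lvert X\rvert/(32\log^2 m)$, the expansion of $J$ gives more than $s\lvert X\rvert$ edges from $X$ to $V(J)\setminus(X\cup Y)$, and Chernoff's bound with $\theta=1/2$ together with the count $m^{2x}\cdot m^{-8x}$ finishes it. Two cosmetic fixes: drop the garbled exponent before ``More precisely'', and bound the number of sets $Y$ directly by $\sum_{y\leq x-1}m^{y}\leq m^{x}$ (valid since your $\ell=x/(32\log^2 m)<x$), because the intermediate bound $m^{\ell}$ is not literally valid, for example when $\ell$ is slightly above $1$.
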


\begin{proof}
Let $J'$ be the random subgraph, and write $\ell=\log m$. Let $U$ and $W$ be disjoint sets of vertices of $J$ with $1\leq\lvert U\rvert\leq 2m/3$ and $\lvert W\rvert<\lvert U\rvert/(32\ell^2)$. Then more than $s\lvert U\rvert$ edges of $J$ join $U$ to vertices outside $U\cup W$, since otherwise the removal of these edges would leave all neighbors of $U$ in $W$, contrary to the expansion of $J$. We call the pair $(U,W)$ \emph{bad} if at most $s\lvert U\rvert/(2h)$ of these edges lie in $J'$. By Chernoff's bound with $\theta=1/2$, this happens with probability at most $\exp(-s\lvert U\rvert/(8h))\leq\exp(-8\ell\lvert U\rvert)\leq m^{-8\lvert U\rvert}$, where we used $s\geq 64h\ell$ and $e^{\ell}\geq 2^{\ell}=m$.

For $u\geq 1$, there are at most $m^u$ sets $U$ of size $u$ and at most $m^u$ sets $W$ of size less than $u/(32\ell^2)\leq u$. So the probability that some pair is bad is at most $\sum_{u\geq 1}m^{2u}m^{-8u}\leq 2m^{-6}\leq m^{-5}$. Suppose that no pair is bad, and let $U\subseteq V(J)$ with $1\leq\lvert U\rvert\leq 2m/3$ and $E''\subseteq E(J')$ with $\lvert E''\rvert\leq s\lvert U\rvert/(2h)$. Let $W=N_{J'-E''}(U)$. If $\lvert W\rvert<\lvert U\rvert/(32\ell^2)$, then all edges of $J'$ from $U$ to vertices outside $U\cup W$ lie in $E''$, so there are at most $s\lvert U\rvert/(2h)$ of them, and the pair $(U,W)$ is bad. So $\lvert N_{J'-E''}(U)\rvert\geq\lvert U\rvert/(32\ell^2)$, and $J'$ is an $s/(2h)$-expander.
\end{proof}

\begin{lemma}\label{lem:core}
Let $L$ be a positive integer and put $r=2^8L^3$. Let $J$ be an $s$-expander with $s>0$ on $m\leq 2^L$ vertices, and let $F\subseteq E(J)$ with $\lvert F\rvert\leq sm/12$. Then there is a set $D\subseteq V(J)$ with $\lvert D\rvert\leq 2\lvert F\rvert/s$ such that for every $Q\subseteq V(J)\setminus D$ with $\lvert Q\rvert\leq s/2$, any two distinct vertices of $(J-F)-D-Q$ are joined by a path of length at most $r$ in $(J-F)-D-Q$.
\end{lemma}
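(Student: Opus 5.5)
Our plan is to build $D$ by iteratively deleting sets that expand badly in $J-F$, in the spirit of Alon and Capalbo~\cite{ac}, and then to get short paths by ball growing. Start with $D=\varnothing$. While there is a set $X\subseteq V(J)\setminus D$ with $1\leq\lvert X\rvert\leq s/2\cdot(\text{something})$, or more generally with $\lvert X\cup D\rvert\le 2m/3$, such that $X$ has at least $s\lvert X\rvert/2$ edges of $F$ incident to it, add $X$ to $D$. Since each deleted set accounts for at least $s\lvert X\rvert/2$ edges of $F$ and distinct steps may be arranged to use disjoint edges (counting only edges not already charged to earlier $D$), we get $\lvert D\rvert\leq 2\lvert F\rvert/s\leq m/6$. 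The natural stopping rule is: stop when every $X\subseteq V(J)\setminus D$ with $\lvert X\rvert\le 2m/3$ meets fewer than $s\lvert X\rvert/2$ edges of $F\cup E_J(X,D)$, the point being that then for such $X$ the set $E'=F\cup E_J(X,D)$ restricted to $X$ has size at most $s\lvert X\rvert$. The delicate bookkeeping is to charge edges from $X$ to $D$: I would define the process on the union $X\cup D$ and argue via the expansion of $D\cup X$ itself, using that $D$ was formed from sets rich in $F$-edges, so that the combined set $D\cup X$ still has at least $s\lvert D\cup X\rvert/2$ incident $F$-edges, keeping the bound $\lvert D\rvert\le 2\lvert F\rvert/s$.

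Given the final $D$, let $J^*=(J-F)-D$. For $Q\subseteq V(J)\setminus D$ with $\lvert Q\rvert\le s/2$ and $X\subseteq V(J^*)\setminus Q$ with $\lvert X\rvert\le m/3$, apply Definition~\ref{def:expander} to $X$ with $E'$ consisting of the $F$-edges at $X$ and the $J$-edges from $X$ to $D\cup Q$. The $F$-part and $D$-part are at most $s\lvert X\rvert/2$ by the stopping rule; the edges to $Q$ are problematic only via their endpoints, so instead I subtract $Q$ from the neighbourhood: $\lvert N_{J^*-Q}(X)\rvert\ge \lvert X\rvert/(32\log^2 m)-\lvert Q\rvert$. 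For $\lvert X\rvert\ge 64\log^2 m\cdot s$ this is at least $\lvert X\rvert/(64\log^2m)$. For small $X$, note each vertex of $J^*-Q$ has degree at least $s/2-\lvert Q\rvert$... which could be zero, so instead I include in $E'$ all edges from $X$ to $Q$, of which there are at most $\lvert X\rvert\lvert Q\rvert\le s\lvert X\rvert/2$; together with the stopping rule this is at most $s\lvert X\rvert$, giving expansion $\lvert X\rvert/(32\log^2 m)$ in $J^*-Q$ for all $\lvert X\rvert\le m/3$ (the neighbours found avoid $Q$ and $D$ since those edges were removed). This is the cleaner route and I would use it.

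Finally, ball growing: from a vertex $u$ of $J^*-Q$, the ball of radius $i$ grows by a factor $1+1/(32L^2)$ while of size at most $m/3$, so after about $32L^2\cdot\ln(m)\le 2^6L^3$ steps it exceeds $m/3$; wait, we need it to exceed half of $\lvert J^*-Q\rvert\ge m-m/6-s/2$, so I would run expansion up to $2m/3$-type sets within the ambient $J$ (adjusting the threshold so the balls from $u$ and $v$ each exceed half of $V(J^*-Q)$), hence intersect, giving a path of length at most $2\cdot 2^7L^3\le r$. The main obstacle is the first paragraph: making the iterative construction of $D$ both keep $\lvert D\rvert\le 2\lvert F\rvert/s$ and guarantee that every remaining $X$ has few edges into $D$, which requires choosing the deleted sets maximally (take a maximal bad set at each step, or the union of all bad sets) so that unions of bad sets stay bad.
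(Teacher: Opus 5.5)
There is a genuine gap in your first paragraph. It is not a bookkeeping issue, and choosing the deleted sets maximally does not fix it: in general, no set $D$ with $\lvert D\rvert\le 2\lvert F\rvert/s$, or even $\lvert D\rvert=O(\lvert F\rvert/s)$, satisfies your stopping rule. Your second and third paragraphs are fine \emph{given} that rule. However, the second paragraph needs, for the final $D$ and every singleton $X=\{v\}$ with $v\notin D$, that $v$ has at most $s/2$ edges lying in $F$ or going into $D$.

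Here is a counterexample. Take any $s$-expander $J_0$ and disjoint sets $D_0,\dots,D_K$ of equal size, with $\lvert D_0\rvert$ small enough that $\lvert F\rvert\le sm/12$ below.
\begin{itemize}
\item Let $F$ be a $(\lfloor s/2\rfloor+1)$-regular graph on $D_0$.
\item Let $J$ consist of $J_0$, $F$, and $(\lfloor s/2\rfloor+1)$-regular bipartite graphs between $D_{j-1}$ and $D_j$ for $j\in[K]$, the latter not in $F$.
\end{itemize}
Adding edges preserves being an $s$-expander. Every vertex of $D_0$ has more than $s/2$ edges of $F$, so it must lie in $D$. Then every vertex of $D_1$ has more than $s/2$ edges into $D$, so it must lie in $D$, and so on. Hence $\lvert D\rvert\ge(K+1)\lvert D_0\rvert$, whereas $\lvert F\rvert/s$ is only of order $\lvert D_0\rvert$. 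Changing the constant $1/2$ does not help. The same cascade refutes your proposed invariant: a set $X$ can become bad purely through non-$F$ edges into $D$, and then $D\cup X$ gains no new $F$-edges.

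The missing idea is to delete sets according to their \emph{vertex} boundary rather than an edge count, and to bound $\lvert D\rvert$ by applying the robust expansion of $J$ to $D$ itself. Put $\eta=1/(32L^2)$ and $K_D=(J-F)-D$. The paper adds to $D$ any nonempty $U\subseteq V(K_D)\setminus Q$ with $\lvert U\rvert\le\lvert K_D\rvert/2$ for which $T_U=N_{K_D-Q}(U)$ has fewer than $\eta\lvert U\rvert/2$ vertices, for some $Q=Q_U$ with $\lvert Q\rvert\le s/2$. The boundary $T_U$ itself is not deleted.

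To bound $\lvert D\rvert$, let $T$ be the set of vertices outside $D$ that lie in some $T_U$. Then $\lvert T\rvert<\eta\lvert D\rvert/2$. Every edge of $J$ from $D$ to $V(J)\setminus(D\cup T)$ lies in $F$ or joins some $U$ to $Q_U$, so there are at most $\lvert F\rvert+s\lvert D\rvert/2$ such edges. Deleting them confines the neighborhood of $D$ to $T$, so the expansion of $J$ forces more than $s\lvert D\rvert$ of them. This gives $\lvert D\rvert<2\lvert F\rvert/s$.

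When the process stops, every nonempty $U\subseteq V(K)\setminus Q$ with $\lvert U\rvert\le\lvert K\rvert/2$ expands by a factor $1+\eta/2$ in $K-Q$ by definition. This is exactly what your ball-growing step needs, since $(1+\eta/2)^{r/2}\ge e^{\eta r/8}=e^{L}$. In this scheme the edges from surviving vertices into $D$ are never counted, which is what defuses the cascade.
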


\begin{proof}
Put $\eta=1/(32L^2)$. As $\log m\leq L$, we have $\lvert N_{J-E'}(X)\rvert\geq\eta\lvert X\rvert$ for all $X$ and $E'$ as in Definition~\ref{def:expander}. For a set $D$, we write $K_D=(J-F)-D$.

We remove from $J-F$, one at a time, sets which expand poorly after the deletion of a few vertices. We start with $D=\varnothing$ and repeat the following step. If there are a set $Q\subseteq V(K_D)$ with $\lvert Q\rvert\leq s/2$ and a nonempty set $U\subseteq V(K_D)\setminus Q$ with $\lvert U\rvert\leq\lvert K_D\rvert/2$ such that $T_U=N_{K_D-Q}(U)$ has fewer than $\eta\lvert U\rvert/2$ vertices, we add $U$ to $D$ and write $Q_U=Q$. Otherwise we stop. We show that $\lvert D\rvert\leq 2\lvert F\rvert/s$ throughout, which gives $\lvert D\rvert\leq m/6$ as $\lvert F\rvert\leq sm/12$. This holds for $D=\varnothing$. Suppose that it holds before an addition. The added set has at most $(m-\lvert D\rvert)/2$ vertices, so afterwards $\lvert D\rvert\leq(m+m/6)/2<2m/3$. Let $T$ be the set of vertices outside $D$ which lie in one of the sets $T_U$ chosen so far. Then $\lvert T\rvert<\eta\lvert D\rvert/2$, as the added sets are disjoint. Let $xy$ be an edge of $J$ with $x\in D$ and $y\notin D\cup T$, and let $U$ be the added set which contains $x$. When $U$ was added to the set $D'$ of that time, $y$ was a vertex of $K_{D'}$ outside $U\cup T_U$, so $xy\in F$ or $y\in Q_U$. As $\lvert Q_U\rvert\leq s/2$, at most $\lvert F\rvert+s\lvert D\rvert/2$ edges of $J$ join $D$ to vertices outside $D\cup T$, and their removal leaves all neighbors of $D$ in $T$. By the expansion of $J$, there are more than $s\lvert D\rvert$ of these edges, so $\lvert D\rvert<2\lvert F\rvert/s$. As $D$ grows with each addition, the process stops.

Let $K=K_D$ for the final set $D$, and let $Q\subseteq V(K)$ with $\lvert Q\rvert\leq s/2$. As the process stopped, for every nonempty $U\subseteq V(K)\setminus Q$ with $\lvert U\rvert\leq\lvert K\rvert/2$, at least $\eta\lvert U\rvert/2$ vertices of $K-Q$ outside $U$ have a neighbor in $U$. As $1+x\geq e^{x/2}$ for $0\leq x\leq 1$, for each vertex $v$ of $K-Q$ the ball of radius $j$ around $v$ in $K-Q$ grows by a factor at least $1+\eta/2$ in each step while it has at most $\lvert K\rvert/2$ vertices. As $(1+\eta/2)^{r/2}\geq e^{\eta r/8}=e^{L}>\lvert K\rvert$, the balls of radius $r/2$ around two distinct vertices of $K-Q$ have more than $\lvert K\rvert/2$ vertices each. So they intersect, and this gives a path of length at most $r$.
\end{proof}

In the next lemma, a \emph{demand} inside a graph $J_i$ asks to join two distinct vertices of $J_i$ by a path in $J_i$, and a \emph{bundle} $B$ consists of a set $Y_B$ of vertices and some demands whose ends lie in $Y_B$.

\begin{lemma}\label{lem:routing}
Let $t\geq 1$, let $L$ be a positive integer, and put $r=2^8L^3$. Let $J_1,\dots,J_q$ be edge-disjoint graphs, possibly with common vertices, such that each $J_i$ is an $s$-expander on at most $2^L$ vertices, where $s\geq 6rt$. Let $\mathcal{B}$ be a finite family of bundles such that $\lvert Y_B\rvert\leq t$ and the demands of $B$ have pairwise disjoint ends for each $B\in\mathcal{B}$. Suppose that each vertex is an end of at most $t$ demands of all the bundles together. Then there is a family $\mathcal{B}_0\subseteq\mathcal{B}$ with
$$\lvert\mathcal{B}\setminus\mathcal{B}_0\rvert\leq\frac{rt^2}{s}\sum_i\lvert J_i\rvert$$
such that the demands of the bundles in $\mathcal{B}_0$ can be joined by paths with the following properties.
\begin{enumerate}[label=\upshape(\alph*)]
\item\label{rt:edge} The paths are pairwise edge-disjoint.
\item\label{rt:int} The paths of the demands of each bundle $B$ have no internal vertex in $Y_B$ and pairwise disjoint sets of internal vertices.
\end{enumerate}
\end{lemma}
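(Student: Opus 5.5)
We process the bundles of $\mathcal{B}$ one at a time, in an arbitrary order, and route each one greedily. We maintain the set $F_i\subseteq E(J_i)$ of edges already used by joining paths inside $J_i$. Each demand lies inside a prescribed $J_i$, and we connect it by a path of length at most $r$ in $J_i-F_i$. Condition~\ref{rt:edge} then holds automatically, because used edges are never reused.

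For~\ref{rt:int}, fix a bundle $B$ and route its demands one after another. For the current demand inside $J_i$, let $Q$ be the set consisting of $Y_B$ minus the two ends of this demand, together with the internal vertices of the paths already chosen for $B$. As the ends of the demands of $B$ are disjoint and lie in $Y_B$, $B$ has at most $t/2$ demands. So $\lvert Q\rvert\leq t+(t/2)r\leq rt\leq s/2$, using $s\geq 6rt$.

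We apply Lemma~\ref{lem:core} to $J_i$ and the current $F_i$. This is allowed while $\lvert F_i\rvert\leq s\lvert J_i\rvert/12$. It gives a set $D_i$ with $\lvert D_i\rvert\leq 2\lvert F_i\rvert/s$. If neither end of the demand lies in $D_i$, Lemma~\ref{lem:core} yields a path of length at most $r$ in $(J_i-F_i)-D_i-Q$ between the ends. This path avoids $Q$ in its interior, as required. If some end lies in $D_i$, or if $\lvert F_i\rvert$ is too large, we abandon the whole bundle $B$. We put $B$ outside $\mathcal{B}_0$ and release the paths already built for it. Alternatively, we first test all demands of $B$ and add the paths only when all succeed. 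This keeps each $F_i$ equal to the union of the paths of accepted bundles.

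The key step is bounding the number of abandoned bundles. We would not recompute $D_i$ at every step. Instead, we stop using $J_i$ once $\lvert F_i\rvert$ reaches a threshold $\phi_i$, of order $s\lvert J_i\rvert/(12)$ or smaller. Note that $D_i$ depends on $F_i$, which grows, so a failure occurs when an end lies in the current $D_i$. A more careful device is needed here, and I would try the following. Take $D_i$ monotone: rerunning the peeling process of Lemma~\ref{lem:core} from the previous $D_i$ shows that $D_i$ can be kept increasing as $F_i$ grows, with $\lvert D_i\rvert\leq 2\lvert F_i\rvert/s$ at all times, since the proof of Lemma~\ref{lem:core} only used that the sets are added disjointly.

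With monotone $D_i$, the count goes as follows. Each vertex of $D_i$ is an end of at most $t$ demands. So the number of bundles abandoned because an end lies in $\bigcup_i D_i$ is at most $t\sum_i\lvert D_i\rvert\leq 2t\sum_i\lvert F_i\rvert/s$. Each accepted bundle adds at most $(t/2)r$ edges in total. Hence $\sum_i\lvert F_i\rvert\leq rt\lvert\mathcal{B}_0\rvert/2$.

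This looks circular, because $\lvert\mathcal{B}_0\rvert$ can be large. The cure is to cap each $F_i$ at $\lvert F_i\rvert\leq s\lvert J_i\rvert/12$, which gives $\lvert D_i\rvert\leq\lvert J_i\rvert/6$. We should instead bound the number of failures by the number of vertices, each blocking at most $t$ demands. That gives at most $t\sum_i\lvert D_i\rvert$ failures. Bundles refused because of saturated $J_i$ need a separate count, and this is the main obstacle. To match the target $rt^2\sum_i\lvert J_i\rvert/s$, I would choose the saturation threshold $\lvert F_i\rvert\geq rt\lvert J_i\rvert$ instead. Then at most $\lvert J_i\rvert$-proportional edges are used, so $\lvert D_i\rvert\leq 2rt\lvert J_i\rvert/s$. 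Saturation cannot occur before that many edges have been used, and each vertex of $J_i$ still has at most $t$ demands, so the total demand load is at most $t\lvert J_i\rvert/2$. Each such demand consumes at most $r$ edges, so the total consumption stays below $rt\lvert J_i\rvert$ and no saturation happens. Therefore only $D$-failures occur, numbering at most $t\sum_i 2rt\lvert J_i\rvert/s$. Up to the constant factor $2$, which can be absorbed by the slack in $s\geq 6rt$, this gives the bound.
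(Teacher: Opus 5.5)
Your online greedy argument works, but the paper organizes the proof differently. The paper takes $\mathcal{B}_0$ to be a \emph{maximal} family whose demands can be joined by paths of length at most $r$ with \ref{rt:edge} and \ref{rt:int}. It lets $F_i$ be the edges of these paths in $J_i$ and applies Lemma~\ref{lem:core} only once, to this final $F_i$. If a bundle outside $\mathcal{B}_0$ had no demand inside any $J_i$ with an end in $D_i$, it could be routed exactly as in your $Q$-argument, contradicting maximality. So Lemma~\ref{lem:core} is used as a black box, and the dependence of $D_i$ on $F_i$ never comes up.

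Your route instead needs a monotone core, which means reopening the proof of Lemma~\ref{lem:core}. The monotonicity does hold, but not merely because the added sets are disjoint. The bound $\lvert D\rvert<2\lvert F\rvert/s$ uses that each edge from an added set $U$ to a vertex outside $D\cup T$ either ends in $Q_U$ or lay in the $F$ of the time when $U$ was added. This survives only because $F_i$ never shrinks. So you should commit to your test-first variant and never release a path: freeze $F_i$ and $D_i$ while handling a bundle and check its ends against $D_i$. If none lies there, Lemma~\ref{lem:core} routes all its demands.

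Your final factor $2$ cannot be absorbed by $s\geq 6rt$, since that hypothesis gives no slack in the conclusion. It also does not arise. At most $t\lvert J_i\rvert/2$ demands lie inside $J_i$, so $\lvert F_i\rvert\leq rt\lvert J_i\rvert/2\leq s\lvert J_i\rvert/12$ at all times, and no saturation threshold is needed. Hence $\lvert D_i\rvert\leq rt\lvert J_i\rvert/s$, and $t\sum_i\lvert D_i\rvert$ is exactly the required bound. The paper's maximality trick buys brevity; your version gives an explicit online procedure at the cost of the monotonicity check.
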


\begin{proof}
Let $\mathcal{B}_0\subseteq\mathcal{B}$ be a maximal family such that the demands of its bundles can be joined by paths of length at most $r$ with properties~\ref{rt:edge} and~\ref{rt:int}, and fix such paths. For each $i$, let $F_i$ be the set of edges of these paths in $J_i$. At most $t\lvert J_i\rvert/2$ demands are inside $J_i$, so $\lvert F_i\rvert\leq rt\lvert J_i\rvert/2\leq s\lvert J_i\rvert/12$, as $s\geq 6rt$. Let $D_i$ be given by Lemma~\ref{lem:core} for $J_i$ and $F_i$.

Suppose that for some bundle $B\in\mathcal{B}\setminus\mathcal{B}_0$, no demand of $B$ inside any graph $J_i$ has an end in $D_i$. We join the demands of $B$ one by one. For a demand to join $x$ and $y$ inside $J_i$, let $Q$ consist of the vertices of $Y_B\setminus\{x,y\}$ and the internal vertices of the paths already chosen for $B$. Each demand of $B$ has two ends, and the ends of distinct demands of $B$ are distinct vertices of $Y_B$, so $B$ has at most $t/2$ demands. Each path already chosen has length at most $r$, and so fewer than $r$ internal vertices. Hence $\lvert Q\rvert\leq t+tr/2\leq rt\leq s/2$, as $r\geq 2$ and $s\geq 6rt$. The vertices $x$ and $y$ are not in $D_i$, and they are not in $Q$, since these paths have no internal vertex in $Y_B$. By Lemma~\ref{lem:core} applied to $(Q\cap V(J_i))\setminus D_i$, the graph $(J_i-F_i)-D_i-Q$ contains a path of length at most $r$ from $x$ to $y$. These paths have no edge in $F_i$, and the graphs $J_i$ are edge-disjoint, so they are edge-disjoint from the fixed paths. The paths of $B$ have disjoint sets of internal vertices which avoid $Y_B$, and their ends are distinct vertices of $Y_B$, so they are vertex-disjoint. Thus we can add $B$ to $\mathcal{B}_0$, contrary to the maximality of $\mathcal{B}_0$.

Hence each bundle in $\mathcal{B}\setminus\mathcal{B}_0$ has a demand inside some graph $J_i$ with an end in $D_i$. Each vertex is an end of at most $t$ demands, so
$$\lvert\mathcal{B}\setminus\mathcal{B}_0\rvert\leq t\sum_i\lvert D_i\rvert\leq\frac{2t}{s}\sum_i\lvert F_i\rvert\leq\frac{rt^2}{s}\sum_i\lvert J_i\rvert.$$
This proves the lemma.
\end{proof}

\subsection{Short and medium cycles}\label{sec:cycles}

Here we prove the three lemmas that Section~\ref{sec:qoh-proof} uses besides the results above. They close pieces of paths, pairs of edges and unpaired edges into cycles.

\begin{lemma}\label{lem:short-cycle}
A multigraph $\Gamma$ on $m\geq 1$ vertices with more than $2m$ edges contains a loop, two parallel edges or a cycle of length at most $2\log m+3$.
\end{lemma}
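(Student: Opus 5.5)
We assume $\Gamma$ has no loops and no parallel edges, so $\Gamma$ is a simple graph on $m$ vertices with more than $2m$ edges, and we find a cycle of length at most $2\log m+3$. The plan is the standard Moore-bound argument: first pass to a subgraph of large minimum degree, then grow a breadth-first search tree from one vertex and show that it must close a short cycle.

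\emph{Step 1: a subgraph of minimum degree at least $3$.} Repeatedly delete a vertex of current degree at most $2$. Each deletion removes at most two edges. If all $m$ vertices were deleted, at most $2m$ edges would be removed, contrary to $e(\Gamma)>2m$. So a nonempty subgraph $\Gamma'$ of minimum degree at least $3$ remains, and it has at most $m$ vertices.

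\emph{Step 2: breadth-first search in $\Gamma'$.} Fix a vertex $v$ of $\Gamma'$ and let $g$ be the length of a shortest cycle of $\Gamma'$ (a cycle exists since $\delta(\Gamma')\geq 3$). Put $h=\lfloor (g-1)/2\rfloor$. For $j\leq h$, the ball of radius $j$ around $v$ induces a tree in $\Gamma'$. Otherwise two distinct walks of length at most $j$ from $v$ would meet, giving a cycle of length at most $2j+1\leq g-1<g$, which is impossible. In this tree, $v$ has at least $3$ children and every other internal vertex has at least $2$ children, because the minimum degree is at least $3$. Hence level $j$ has at least $3\cdot 2^{j-1}\geq 2^j$ vertices for $1\leq j\leq h$, and so $2^h\leq m$, that is, $h\leq\log m$.

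\emph{Step 3: conclusion.} From $h=\lfloor (g-1)/2\rfloor\geq (g-2)/2$ we get $g\leq 2h+2\leq 2\log m+2\leq 2\log m+3$. This cycle of $\Gamma'$ is also a cycle of $\Gamma$.

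The main point needing care is the multigraph reduction. A loop or a pair of parallel edges is already an allowed outcome, so excluding them at the start legitimately makes $\Gamma$ simple, and Step 1 then uses only the edge count. Beyond that, only the tree-structure claim within radius $h$ needs care, since the extra $+1$ of slack absorbs any rounding in $h$. The case of small $m$ is automatic: for example, no simple graph on $m\leq 4$ vertices has more than $2m$ edges, so the statement holds there vacuously.
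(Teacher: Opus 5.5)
Your route is the paper's. Step~1 is identical to its pruning, and after that both arguments show that a breadth-first ball in the minimum-degree-$3$ subgraph is tree-like and count its levels. The only difference is bookkeeping: the paper argues by contradiction with the fixed radius $1+\log m$, while you work with the girth $g$ of $\Gamma'$.

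This introduces an off-by-one slip in Step~2. With $h=\lfloor (g-1)/2\rfloor$ you only have $2h+1\le g$, not $2h+1\le g-1$. So when $g$ is odd, a non-tree edge inside the ball of radius $h$ can close a cycle of length exactly $g$, which is no contradiction. Your claim that this ball induces a tree is in fact false. For example, $K_6$ satisfies the hypotheses with $g=3$ and $h=1$, yet the ball of radius $1$ around any vertex is all of $K_6$.

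The conclusion survives, because the level count needs less than a tree. Suppose a vertex at level $j-1$ had a neighbor, other than its parent, at level $j-2$ or $j-1$, or two vertices at level $j-1$ had a common neighbor at level $j$. Then you would get a cycle of length at most $2j$. For $j\le h$ this is at most $2h\le g-1$, which is impossible. So each vertex at level $j-1$ has at least two private children, level $j$ has at least $3\cdot 2^{j-1}$ vertices, and $h\le\log m$ follows as you wrote.

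Alternatively, take $h=\lfloor (g-2)/2\rfloor$. Then $2h+1\le g-1$ and the ball of radius $h$ really does induce a tree. The same count gives $g\le 2h+3\le 2\log m+3$, which is exactly where the paper's $+3$ comes from.
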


\begin{proof}
Suppose $\Gamma$ has no loop and no parallel edges. We repeatedly remove a vertex of degree at most two until no such vertex remains. As $e(\Gamma)>2m$ and we remove at most two edges with each vertex, a nonempty subgraph of minimum degree at least three remains. If it has no cycle of length at most $2\log m+3$, then the ball of radius $1+\log m$ around any of its vertices $v$ induces a tree. Indeed, otherwise an edge inside the ball and outside a breadth-first search tree from $v$ closes a cycle of length at most $2(1+\log m)+1=2\log m+3$. In this tree, the root has at least three children and every non-root vertex of depth at most $\log m$ has at least two children. So the tree has at least $3\cdot 2^{j-1}$ vertices at each depth $j\leq 1+\log m$. Some such $j$ exceeds $\log m$, so the tree has more than $3m/2>m$ vertices, a contradiction.
\end{proof}

\begin{lemma}\label{lem:grouping}
Let $d\geq 1$ be an integer and let $B$ be a bipartite graph with parts $W$ and $A$ such that $d_B(u)\leq d$ for each $u\in A$. Suppose that at each $w\in W$, the incident edges are paired into $\lfloor d_B(w)/2\rfloor$ pairs, so that each pair forms a cherry with center $w$. Then these cherries can be partitioned into at most $\lvert A\rvert+2d$ groups, each consisting of at most $d$ pairwise vertex-disjoint cherries.
\end{lemma}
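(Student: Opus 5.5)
The plan is a single greedy assignment with $K=\lvert A\rvert+2d$ groups, all initially empty. We process the cherries one at a time in an arbitrary order. We put the current cherry into any group that satisfies three conditions: it has fewer than $d$ cherries, it contains no cherry with the same center, and it contains no cherry sharing an end with the current one. Groups built this way have at most $d$ cherries each. Their cherries are pairwise vertex-disjoint, because two cherries can meet only in their centers, which lie in $W$, or in their ends, which lie in $A$. So it suffices to show that at every step fewer than $K$ groups are blocked.

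The first step is a global count of the cherries. Each cherry uses two edges of $B$, and distinct cherries use distinct edges, since pairs at a vertex $w$ are disjoint and edges at different centers are different edges. Every edge has exactly one end in $A$. So the total number $N$ of cherries is at most $e(B)/2=\sum_{u\in A}d_B(u)/2\leq d\lvert A\rvert/2$. Consequently at most $N/d\leq\lvert A\rvert/2$ groups are full at any time.

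Next we bound the groups blocked by the current cherry $w\,u\,u'$ locally, where $w\in W$ and $u,u'\in A$.
\begin{itemize}
\item Sharing an end: each of $u,u'$ lies in at most $d_B(u)-1\leq d-1$ other cherries, so at most $2d-2$ groups are blocked this way.
\item Sharing the center: the other cherries at $w$ number at most $\lfloor d_B(w)/2\rfloor-1$. Since the ends of the cherries at $w$ are distinct vertices of $A$, we have $d_B(w)\leq\lvert A\rvert$, so this is less than $\lvert A\rvert/2$.
\end{itemize}

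Adding the three bounds, the number of blocked groups is less than $\lvert A\rvert/2+(2d-2)+\lvert A\rvert/2<\lvert A\rvert+2d=K$. Hence some group is always available, and the greedy procedure places every cherry.

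The only step needing care is the bound on full groups. It uses the global edge count through $A$ and the hypothesis $d_B(u)\leq d$. The same-center bound uses that cherries at $w$ have distinct ends in $A$. Once these two observations are in place, the lemma follows directly from the greedy argument.
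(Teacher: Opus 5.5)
Your proof is correct and uses the same two counts as the paper: fewer than $\lvert A\rvert/2$ other cherries at the same center plus at most $2(d-1)$ through the ends, and at most $e(B)/2\leq d\lvert A\rvert/2$ cherries in total. The paper greedily colors the conflict graph with at most $\lvert A\rvert/2+2d$ classes and then cuts each class into chunks of size at most $d$, while you build the size limit into a single greedy pass by counting full groups as blocked; this is the same argument organized in one step.
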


\begin{proof}
Two cherries conflict if they share a vertex. A cherry $u\,w\,u'$ conflicts with at most $\lvert A\rvert/2-1$ other cherries with center $w$, as $w$ has at most $\lvert A\rvert$ edges in $B$, and with at most $2(d-1)$ cherries through $u$ or $u'$. So we can partition the cherries greedily into at most $\lvert A\rvert/2+2d$ classes such that no two cherries in a class conflict. We split each class with $a$ cherries into $\lceil a/d\rceil\leq a/d+1$ groups of size at most $d$. As there are at most $e(B)/2\leq d\lvert A\rvert/2$ cherries, the number of groups is at most $\lvert A\rvert/2+(\lvert A\rvert/2+2d)=\lvert A\rvert+2d$.
\end{proof}

\begin{lemma}\label{lem:medium-cycle}
Let $M$ be a graph on $m\geq 2$ vertices and let $\lambda\geq 1$ be an integer. Then $E(M)$ can be partitioned into cycles of lengths between $\lambda$ and $\lambda+2^{12}\log^3 m$ and a graph with at most $8(\lambda+1)m\log m$ edges.
\end{lemma}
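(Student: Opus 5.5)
We remove cycles greedily. While the current graph has a cycle of length between $\lambda$ and $\lambda+2^{12}\log^3 m$, we delete one. When no such cycle remains, we show that the leftover graph $M'$ has at most $8(\lambda+1)m\log m$ edges. So the whole task is an extremal statement: a graph on $m$ vertices with more than $8(\lambda+1)m\log m$ edges contains a cycle with length in the window $[\lambda,\lambda+2^{12}\log^3 m]$.

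\textbf{Step 1: pass to an expander.} Apply Theorem~\ref{thm:bm}\ref{bm:split} to $M'$ with $s=\lambda+1$. This removes at most $4(\lambda+1)m\log m$ edges, which is fewer than half of $e(M')$, so at least one nonempty $(\lambda+1)$-expander $J\subseteq M'$ survives. Let $m_J=\lvert J\rvert\le m$. By the remark after Definition~\ref{def:expander}, $J$ has minimum degree greater than $\lambda+1$.

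\textbf{Step 2: build a path of length about $\lambda$.} The minimum degree bound gives, greedily, a path $P$ in $J$ with at least $\lambda+1$ vertices. Let $x$ and $y$ be its two ends. Choose the initial segment $P_0$ of $P$, of length about $\lambda$, whose ends are $x$ and $y$ for this purpose.

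\textbf{Step 3: close the path.} We want a short path from $y$ back to $x$ in $J$ that avoids the interior of $P_0$. This uses the robust expansion.

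Let $Q$ be the interior of $P_0$. This set can have size about $\lambda$, which is more than the $s/2$ allowed in Lemma~\ref{lem:core}, so the lemma does not apply directly. Instead, run the ball-growth argument from the proof of Lemma~\ref{lem:core} with $F=\varnothing$, using the following observation. Deleting a vertex set $Q$ from $J$ costs at most $\lvert Q\rvert\cdot\Delta$ edges, which is useless in general. However, the expansion in Definition~\ref{def:expander} is robust to removing $s\lvert X\rvert$ edges. So the neighbourhood of a ball $X$ loses only about $\lvert Q\rvert$ vertices, each of which we can charge against $X$'s expansion once $\lvert X\rvert$ is large. For small balls, use the minimum degree instead: $x$ has more than $\lambda+1$ neighbours, of which at most $\lambda-1$ lie in $P_0$.

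More cleanly, replace $x$ and $y$ by the sets $X_0=N(x)\setminus V(P_0)$ and $Y_0=N(y)\setminus V(P_0)$. It is enough that these are nonempty. Then grow balls from $X_0$ and $Y_0$ in $J-V(P_0)$. A vertex set of size at least $32\log^2 m_J\cdot\lvert P_0\rvert$ still expands after deleting $V(P_0)$. Reaching that size is the issue, and it is where I expect the \textbf{main obstacle}.

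To handle this, I will first try the following. Choose $P_0$ to be a segment of a \emph{longest} path, so that all neighbours of its ends lie on the path. Then use rotation, as in P\'osa's argument, to get many endpoints $x'$. Each endpoint with a neighbour far back along the path yields a cycle, and the lengths of these cycles can be tuned to lie in the window. The $\log^3 m$ slack should come from the diameter bound $r=2^8\log^3 m$ of Lemma~\ref{lem:core}, applied with $Q$ the small part of $P_0$ near its ends.

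\textbf{Step 4: adjust the length.} The cycle formed by $P_0$ together with a joining path of length at most $2^8\log^3 m$ has length between $\lambda$ and $\lambda+2^{12}\log^3 m$. This contradicts the maximality of the greedy removal, so the leftover graph has at most $8(\lambda+1)m\log m$ edges, as required.
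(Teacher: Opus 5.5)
\textbf{There is a genuine gap in Step~3.} Your reduction to an extremal statement matches the paper, and so do Steps~1, 2 and~4. But Step~3 is never carried out, and it is the only nontrivial step. You end it with a plan that is not executed and does not work as stated: take a segment of a longest path and use P\'osa rotations to tune cycle lengths. The ends of a \emph{segment} of a longest path need not have all their neighbours on the path. Rotations of a long path also produce cycles whose lengths you have no way of confining to the window $[\lambda,\lambda+2^{12}\log^3 m]$.

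\textbf{The missing idea is your choice of $s$.} The edge budget in the statement is $8(\lambda+1)m\log m=4(2\lambda+2)m\log m$. This is exactly the cost of Theorem~\ref{thm:bm}\ref{bm:split} with $s=2\lambda+2$, not $s=\lambda+1$; you halved $s$ needlessly. With $s=2\lambda+2$ the argument closes as follows.
\begin{itemize}
\item We have $\delta(J)>2\lambda+2$, so you can take a path $v_0\cdots v_\lambda$, and neighbours $u_0$ of $v_0$ and $u_1$ of $v_\lambda$ outside $X=\{v_0,\dots,v_\lambda\}$.
\item Since $\lvert X\rvert=\lambda+1=s/2$, apply Lemma~\ref{lem:core} with $F=\varnothing$ (so $D=\varnothing$) and $Q=X$. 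It joins $u_0$ to $u_1$ in $J-X$ by a path of length at most $2^8\lceil\log m\rceil^3\le 2^{11}\log^3 m$.
\item If $u_0=u_1$, you get a cycle of length $\lambda+2$ directly. Otherwise the path closes into a cycle of length between $\lambda+2$ and $\lambda+2^{12}\log^3 m$.
\end{itemize}
This contradicts maximality of the removed family.

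\textbf{Your worry about small balls was also unfounded, even with $s=\lambda+1$.} Suppose $X\cap Q=\varnothing$ and $\lvert Q\rvert\le s$. Then at most $\lvert X\rvert\lvert Q\rvert\le s\lvert X\rvert$ edges join $X$ to $Q$. Deleting them is within the robustness budget of Definition~\ref{def:expander}, so $\lvert N_{J-Q}(X)\rvert\ge\lvert X\rvert/(32\log^2\lvert J\rvert)$ for \emph{every} such $X$, small or large. That is all the ball-growth argument needs. You half-stated this observation but concluded it only helps once $\lvert X\rvert$ is large, which sent you towards the rotation detour.
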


\begin{figure}[ht]
\centering
\begin{minipage}[c]{0.36\textwidth}
\centering
\begin{tikzpicture}[scale=0.75]
\foreach \i/\x in {0/0.5,1/1.5,2/2.5,3/3.5,4/4.5,5/5.5}{\node[vtx] (v\i) at (\x,2.4) {};}
\draw[red!70!black,thick] (v0)--(v1)--(v2); \draw[red!70!black,thick,dotted] (v2)--(v3); \draw[red!70!black,thick] (v3)--(v4)--(v5);
\node[font=\footnotesize,above] at (v0.north) {$v_0$};
\node[font=\footnotesize,above] at (v1.north) {$v_1$};
\node[font=\footnotesize,above] at (v5.north) {$v_\lambda$};
\draw[decorate,decoration={brace,amplitude=4pt}] (0.3,2.95)--(5.7,2.95) node[midway,above=3pt,font=\small]{$X$};
\node[vtx] (s0) at (1.1,0.6) {}; \node[vtx] (s0b) at (0.6,0.3) {};
\node[vtx] (s1) at (4.9,0.6) {}; \node[vtx] (s1b) at (5.4,0.3) {};
\draw[red!70!black,thick] (v0)--(s0); \draw[gray] (v0)--(s0b);
\draw[red!70!black,thick] (v5)--(s1); \draw[gray] (v5)--(s1b);
\node[font=\footnotesize] at (0.85,-0.0) {$S_0$};
\node[font=\footnotesize] at (5.15,-0.0) {$S_1$};
\draw[blue!70!black,thick,dashed] (s0) .. controls (1.9,0.9) and (2.4,0.1) .. (3.0,0.1) .. controls (3.6,0.1) and (4.1,0.9) .. (s1);
\node[font=\footnotesize] at (3.0,-0.5) {$J-X$};
\end{tikzpicture}
\end{minipage}\hfill
\begin{minipage}[c]{0.6\textwidth}
\setlength{\abovecaptionskip}{0pt}
\caption{The cycle in the proof of Lemma~\ref{lem:medium-cycle}. The dashed path in $J-X$ closes the path $v_0\cdots v_\lambda$ into a cycle.}
\label{fig:medium}
\end{minipage}
\end{figure}

\begin{proof}
Take a maximal family of edge-disjoint cycles with lengths in this range and let $M'$ be the graph of the remaining edges. Suppose to the contrary that $e(M')>8(\lambda+1)m\log m$. We apply Theorem~\ref{thm:bm}\ref{bm:split} to $M'$ with $2\lambda+2$ playing the role of $s$. As $e(M')>8(\lambda+1)m\log m$, some edge of $M'$ is not removed, and the part which contains it is a $(2\lambda+2)$-expander $J\subseteq M'$. As $\delta(J)>2\lambda+2$, we can greedily choose a path $v_0v_1\cdots v_{\lambda}$ in $J$. Let $X=\{v_0,\dots,v_{\lambda}\}$, and let $S_0$ and $S_1$ be the sets of neighbors of $v_0$ and of $v_{\lambda}$ outside $X$. Both are nonempty. Figure~\ref{fig:medium} shows the cycle we find.

Let $u_0\in S_0$ and $u_1\in S_1$. If $u_0=u_1$, then $v_0\cdots v_\lambda u_0$ is a cycle of length $\lambda+2$. Otherwise, we apply Lemma~\ref{lem:core} to $J$ with $L=\lceil\log m\rceil$ and $F=\varnothing$, as $\lvert J\rvert\leq m\leq 2^L$. Then $D=\varnothing$, and as $\lvert X\rvert=\lambda+1$ is half of $2\lambda+2$, the graph $J-X$ contains a path of length at most $2^8L^3\leq 2^{11}\log^3 m$ from $u_0$ to $u_1$. Together with the path $v_0\cdots v_{\lambda}$ and the edges $v_0u_0$ and $v_\lambda u_1$, it forms a cycle of length at least $\lambda+2$ and at most $\lambda+2^{12}\log^3 m$. This contradicts the maximality of the family.
\end{proof}

\subsection{Proof of \texorpdfstring{Theorem~\ref{thm:qoh}}{Theorem 1.4}}\label{sec:qoh-proof}

We now prove Theorem~\ref{thm:qoh}. Let $\eps>0$. We may assume $\eps\leq 1$, and we put $\alpha=\eps/3$. Let $n_0$ be sufficiently large, and let $G$ be a graph on $n$ vertices with $\delta(G)\geq n^{\eps}$. If $n<n_0$, then the fewer than $n_0n/2$ edges of $G$, recorded as single edges, form a decomposition. So we may assume that $n\geq n_0$. We put
$$\ell=\log n,\qquad s=n^{2\alpha}\qquad\text{and}\qquad \lambda=\ell^{300}.$$

\medskip\noindent\textbf{Expanders and the graph $H$.} By Theorem~\ref{thm:bm}\ref{bm:split} applied to $G$, there is a set $F$ of at most $4sn\ell$ edges whose removal leaves edge-disjoint $s$-expanders $J_1,\dots,J_{j^*}$ with $\sum_i m_i\leq 2n$, where $m_i=\lvert J_i\rvert$. Each $J_i$ has minimum degree greater than $s$, so $m_i>s$ and $j^*\leq 2n/s$. Let $Z$ be the set of vertices which lie in no $J_i$. All edges at a vertex of $Z$ belong to $F$, so, as $n$ is large,
$$\lvert Z\rvert\leq\frac{2\lvert F\rvert}{\delta(G)}\leq\frac{8sn\ell}{n^{3\alpha}}=\frac{8n\ell}{n^{\alpha}}\leq\frac{n}{\lambda^3}.$$

\smallskip\noindent\emph{Reservoirs.} Recall that a demand inside $J_i$ asks to join two distinct vertices of $J_i$ by a path in $J_i$. We call such a path a \emph{joining path}. The joining paths inside $J_i$ will use only edges of a sparse random subgraph $R_i$ of $J_i$, its \emph{reservoir}. The edges of $R_i$ which are not used are decomposed at the end by Theorem~\ref{thm:cfs}, so $R_i$ must be sparse. Put $h=n^{\alpha/2}$.

\begin{claim}\label{cl:reservoirs}
For each $i$, there is a spanning subgraph $R_i$ of $J_i$ which is an $n^{\alpha}$-expander and is $(\alpha/4,8/\alpha)$-sparse.
\end{claim}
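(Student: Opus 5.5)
The plan is to take $R_i$ to be the random spanning subgraph of $J_i$ that keeps each edge independently with probability $1/h$, where $h=n^{\alpha/2}$. We then verify the two properties separately, each with positive probability. Since the claim concerns one $i$ at a time, it suffices to show that both properties hold simultaneously with probability greater than zero.

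\emph{Expansion.} Apply Lemma~\ref{lem:split} to the $s$-expander $J_i$ on $m_i$ vertices with this $h$ (rounded to an integer). Its hypothesis $s\geq 64h\log m_i$ holds because $s=n^{2\alpha}$, $h=n^{\alpha/2}$ and $\log m_i\leq\log(2n)$, so $64h\log m_i\leq 64n^{\alpha/2}\log(2n)\leq n^{2\alpha}$ for large $n$. The lemma then says that $R_i$ is an $s/(2h)$-expander with probability at least $1-m_i^{-5}$. Since $s/(2h)=n^{3\alpha/2}/2\geq n^{\alpha}$ and expansion is monotone in $s$, $R_i$ is an $n^{\alpha}$-expander. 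We have $m_i>s$, so $m_i\to\infty$ and this probability tends to $1$.

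\emph{Sparseness.} Compare $R_i$ with $G(m_i,p)$ for $p=1/h$: $R_i$ is the subgraph of $J_i$ consisting of the edges that survive in a coupled copy of $G(m_i,p)$ on $V(J_i)$. Since $(\rho,c)$-sparseness is inherited by subgraphs, it suffices that $G(m_i,p)$ is $(\alpha/4,8/\alpha)$-sparse with probability $1-o(1)$. This follows from Theorem~\ref{thm:cfs}\ref{cfs:random} with $\rho=\alpha/4<1/2$, which gives $2/\rho=8/\alpha$. That theorem needs $p\leq m_i^{-\rho}$, that is, $n^{-\alpha/2}\leq m_i^{-\alpha/4}$, which holds because $m_i\leq n^2$; in fact $m_i\leq n$. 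Uniformity in $p$ makes the $o(1)$ depend only on $m_i\geq s\to\infty$, so the failure probability is small for large $n_0$.

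Combining the two bounds, both properties hold with probability $1-o(1)>0$, so a suitable $R_i$ exists. The main point needing care is the coupling. We must check that thinning $J_i$ edgewise with probability $p$ is the same as intersecting $J_i$ with $G(m_i,p)$, which is immediate. We must also make sure that the rounding of $h$ to an integer does not break the inequality $p\leq m_i^{-\rho}$; this is harmless, since there is polynomial slack in both constraints.
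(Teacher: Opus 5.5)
Your proposal is correct and follows the paper's proof essentially step for step. Both keep each edge of $J_i$ with probability $1/h$ and apply Lemma~\ref{lem:split} for expansion. Both then view $R_i$ as $J_i\cap G(m_i,1/h)$ and use Theorem~\ref{thm:cfs}\ref{cfs:random} with $\rho=\alpha/4$ for sparseness. Finally, both use that $m_i>s$ is large to get a choice with both properties.
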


\begin{proof}[Proof of Claim~\ref{cl:reservoirs}]
Let $R_i$ contain each edge of $J_i$ independently with probability $1/h$. We apply Lemma~\ref{lem:split} to the $s$-expander $J_i$ with $h$. Since $J_i\subseteq G$, we have $m_i\leq n$ and so $\log m_i\leq\ell$. Thus its condition $s\geq 64h\log m_i$ holds, and $s/(2h)\geq n^{\alpha}$, as $n$ is large. So $R_i$ is an $n^{\alpha}$-expander, except with probability at most $m_i^{-5}$. The graph $R_i$ has the same distribution as $J_i\cap G(m_i,1/h)$, where $G(m_i,1/h)$ is the random graph on $V(J_i)$. As $1/h\leq m_i^{-\alpha/4}$, Theorem~\ref{thm:cfs}\ref{cfs:random} with $\rho=\alpha/4$ shows that this random graph is $(\alpha/4,8/\alpha)$-sparse with probability $1-o(1)$, and then so is its subgraph $R_i$. As $m_i\geq s$ is large, some choice has both properties. This proves the claim.
\end{proof}

We fix such reservoirs, and we write $R$ for the graph on $V(G)$ whose edges are the edges of all reservoirs. The reservoirs are edge-disjoint, as the graphs $J_i$ are.

\smallskip\noindent\emph{Removing cycles.} By Theorem~\ref{thm:bm}\ref{bm:bulk} applied to $G-E(R)$, we can remove at most $6n$ edge-disjoint cycles from $G-E(R)$ so that the remaining graph $H$ satisfies $e(H)\leq\beta n\ell^{274}\leq n\lambda$. Thus $H$ has no edge in $R$.

A vertex $u$ will be an end of up to $d_H(u)$ paths inside the expanders, but Lemma~\ref{lem:routing} needs a bound on the number of paths with a given end. So we treat the vertices of large degree in $H$ separately, together with the vertices which lie in no expander. Let
$$W=Z\cup\{v:d_H(v)>\lambda^3\}\qquad\text{and}\qquad X_1=V(G)\setminus W.$$
We have $\lvert W\rvert\leq n/\lambda^3+2e(H)/\lambda^3\leq 3n/\lambda^2$. Each vertex of $X_1$ has degree at most $\lambda^3$ in $H$. We assign each vertex $v\in X_1$ to one expander $J_{i(v)}$ which contains $v$, and we write $A_i$ for the set of vertices assigned to $J_i$. The sets $A_i$ are disjoint.

Let $H_1=H[W]$ and $H_2=H[X_1]$, and let $H_3$ be the graph of the edges of $H$ between $W$ and $X_1$. If $\lvert W\rvert\geq 2$, then Theorem~\ref{thm:bm}\ref{bm:bulk} applied to $H_1$ shows that $H_1$ can be decomposed into at most $(3n/\lambda^2)(6+\lambda)\leq n$ cycles and edges, as $\lvert W\rvert\leq 3n/\lambda^2$ and $\beta\ell^{274}\leq\lambda$.

\medskip\noindent\textbf{Constructions.} We now close most edges of $H_2$ and $H_3$ into cycles. A \emph{construction} $C$ is a cyclic sequence of paths $P_1,\dots,P_k$ of $H$ together with indices $i_1,\dots,i_k\in[j^*]$ such that $P_p$ runs from a vertex $x_p\in A_{i_p}$ to a vertex $y_p\in A_{i_{p+1}}$, where indices are read modulo $k$. The paths are vertex-disjoint, except that $y_{p-1}=x_p$ is allowed. For each $p$ with $y_{p-1}\neq x_p$, there is a demand to join $y_{p-1}$ to $x_p$ inside $J_{i_p}$. We write $Y_C$ for the set of vertices of the paths $P_1,\dots,P_k$ of $C$. If each demand of $C$ is joined by a joining path, and these joining paths have pairwise disjoint interiors which avoid $Y_C$ and use no edge of $H$, then they form a cycle together with $P_1,\dots,P_k$. Each construction $C$ will satisfy $\lvert Y_C\rvert\leq 4\lambda^3$. We find constructions of three kinds. Those of the first kind come from pieces of paths of $H_2$, those of the second kind from pairs of edges of $H_3$ at vertices of $W$, and those of the third kind from the remaining edges of $H_3$ (see Figure~\ref{fig:constructions}(a)--(c), respectively).

\begin{figure}[ht]
\centering
\begin{tikzpicture}[scale=0.9]
\begin{scope}[scale=0.7]
\draw[blue!50!black,fill=blue!5] (0,0) ellipse (1.45 and 0.95);
\fill[blue!5] (4.6,0.8) ellipse (1.45 and 0.95);
\fill[blue!5] (4.6,-0.8) ellipse (1.45 and 0.95);
\draw[blue!50!black] (4.6,0.8) ellipse (1.45 and 0.95);
\draw[blue!50!black] (4.6,-0.8) ellipse (1.45 and 0.95);
\node[font=\footnotesize,blue!50!black] at (-1.25,0.95) {$J_{i_1}$};
\node[font=\footnotesize,blue!50!black] at (5.85,1.75) {$J_{i_2}$};
\node[font=\footnotesize,blue!50!black] at (5.85,-1.75) {$J_{i_3}$};
\coordinate (a1) at (0.95,0.45); \coordinate (a2) at (0.95,-0.45);
\coordinate (b1) at (3.55,0.95); \coordinate (b2) at (5.65,0.95);
\coordinate (c1) at (5.65,-0.95); \coordinate (c2) at (3.55,-0.95);
\coordinate (r1) at (-0.15,-0.5); \coordinate (r2) at (-0.45,0.3);
\coordinate (g1) at (4.2,1.35); \coordinate (g2) at (5.05,1.3);
\coordinate (o1) at (5.05,-1.3); \coordinate (o2) at (4.2,-1.35);
\draw[blue!70!black,thick,dashed] (a2)--(r1)--(r2)--(a1);
\draw[blue!70!black,thick,dashed] (b1)--(g1)--(g2)--(b2);
\draw[blue!70!black,thick,dashed] (c1)--(o1)--(o2)--(c2);
\foreach \q in {r1,r2,g1,g2,o1,o2}{\node[circle,fill=blue!70!black,inner sep=1.4pt] at (\q) {};}
\coordinate (p1) at (1.9,1.15); \coordinate (p2) at (2.7,1.25);
\coordinate (p3) at (6.55,0.55); \coordinate (p4) at (6.55,-0.55);
\coordinate (p5) at (2.7,-1.25); \coordinate (p6) at (1.9,-1.15);
\draw[red!70!black,thick] (a1)--(p1)--(p2)--(b1);
\draw[red!70!black,thick] (b2)--(p3)--(p4)--(c1);
\draw[red!70!black,thick] (c2)--(p5)--(p6)--(a2);
\foreach \q in {a1,a2,b1,b2,c1,c2,p1,p2,p3,p4,p5,p6}{\node[circle,fill=blue!60!black,inner sep=1.4pt] at (\q) {};}
\node[font=\footnotesize] at (2.3,1.55) {$P_1$};
\node[font=\footnotesize] at (6.9,0) {$P_2$};
\node[font=\footnotesize] at (2.3,-1.55) {$P_3$};
\end{scope}
\node[font=\small] at (1.9,-2.2) {(a) pieces of paths};
\begin{scope}[xshift=6.9cm]
\draw[blue!50!black,fill=blue!5] (1,0) ellipse (2.3 and 0.8);
\node[font=\footnotesize,blue!50!black] at (1,-1.05) {$J_i$};
\node[upvtx,label={[font=\footnotesize]above:$w_1$}] (w1) at (-0.1,1.6) {};
\node[upvtx,label={[font=\footnotesize]above:$w_2$}] (w2) at (2.1,1.6) {};
\node[vtx,label={[font=\scriptsize]left:$u_1$}] (x1) at (-0.5,0.05) {};
\node[vtx,label={[font=\scriptsize]above right:$u'_1$}] (y1) at (0.3,0.05) {};
\node[vtx,label={[font=\scriptsize]above left:$u_2$}] (x2) at (1.7,0.05) {};
\node[vtx,label={[font=\scriptsize]right:$u'_2$}] (y2) at (2.5,0.05) {};
\draw[red!70!black,thick] (x1)--(w1)--(y1); \draw[red!70!black,thick] (x2)--(w2)--(y2);
\draw[blue!70!black,thick,dashed] (y1) to[bend right=40] (x2);
\draw[blue!70!black,thick,dashed] (y2) to[bend left=50] (x1);
\node[font=\small] at (1,-2.2) {(b) pairs of edges at $W$};
\end{scope}
\begin{scope}[xshift=12.1cm]
\draw[blue!50!black,fill=blue!5] (0,0) ellipse (0.8 and 0.55);
\draw[blue!50!black,fill=blue!5] (2.4,0) ellipse (0.8 and 0.55);
\node[font=\footnotesize,blue!50!black] at (0,-0.85) {$J_{i_1}$};
\node[font=\footnotesize,blue!50!black] at (2.4,-0.85) {$J_{i_2}$};
\node[upvtx,label={[font=\footnotesize]above:$w_1$}] (u1) at (1.2,1.5) {};
\node[upvtx,label={[font=\footnotesize]below:$w_2$}] (u2) at (1.2,-1.5) {};
\node[vtx] (p1) at (-0.3,0.1) {}; \node[vtx] (q1) at (0.3,-0.1) {};
\node[vtx] (p2) at (2.1,-0.1) {}; \node[vtx] (q2) at (2.7,0.1) {};
\draw[red!70!black,thick] (u1)--(p1); \draw[red!70!black,thick] (q1)--(u2);
\draw[red!70!black,thick] (u2)--(p2); \draw[red!70!black,thick] (q2)--(u1);
\draw[blue!70!black,thick,dashed] (p1) to[bend right=30] (q1);
\draw[blue!70!black,thick,dashed] (p2) to[bend left=30] (q2);
\node[font=\small] at (1.2,-2.2) {(c) unpaired edges};
\end{scope}
\end{tikzpicture}
\caption{The three kinds of constructions, with the paths of $H$ solid and the joining paths inside the expanders dashed. In (a), $J_{i_2}$ and $J_{i_3}$ share vertices. In (c), drawn schematically, the construction comes from a cycle $i_1w_1i_2w_2i_1$ of the auxiliary bipartite graph $B_{\mathrm{odd}}$.}
\label{fig:constructions}
\end{figure}

\smallskip\noindent\emph{Pieces of paths.} By Theorem~\ref{thm:bm}\ref{bm:paths}, we decompose $H_2$ into at most $n$ paths. All vertices of these paths lie in $X_1$.

We cut each path into pieces of length at most $\lambda+1$, so that each piece except the last one of its path has length at least $\lambda$. So the number of pieces is at most $n+e(H)/\lambda\leq 2n$. Each vertex $v\in X_1$ lies in at most $d_H(v)\leq\lambda^3$ pieces.

We greedily partition the pieces into at most $(\lambda+2)\lambda^3$ classes of pairwise vertex-disjoint pieces, so that the pieces of one class can form a construction. This is possible since a piece has at most $\lambda+2$ vertices and each of them lies in at most $\lambda^3$ pieces. For each class, consider the multigraph $\Gamma$ on the set $[j^*]$ of indices of the expanders $J_1,\dots,J_{j^*}$, with an edge between $i(x)$ and $i(y)$ for each piece of this class with ends $x$ and $y$. Recall that $x$ is assigned to $J_{i(x)}$. While $\Gamma$ has more than $2j^*$ edges, Lemma~\ref{lem:short-cycle} gives a loop, two parallel edges or a cycle of length at most $2\log j^*+3\leq 2\ell+3$. Let $i_1,\dots,i_k$ be the vertices of this loop, these parallel edges or this cycle in cyclic order, and let $P_1,\dots,P_k$ be the corresponding pieces, so that $P_p$ runs from a vertex of $A_{i_p}$ to a vertex of $A_{i_{p+1}}$. These pieces form a construction $C$, and we remove the corresponding edges from $\Gamma$. As each piece has at most $\lambda+2$ vertices, we have $\lvert Y_C\rvert\leq k(\lambda+2)\leq(2\ell+3)(\lambda+2)\leq 4\lambda^3$.

When at most $2j^*$ edges of $\Gamma$ remain, we record the edges of the remaining pieces of this class as single edges. There are at most $(\lambda+2)\lambda^3$ classes and $j^*\leq 2n/s$, so in total this records at most $2j^*(\lambda+2)\lambda^3(\lambda+1)\leq 4n(\lambda+2)^2\lambda^3/s\leq n$ edges, as $s=n^{2\alpha}$. There are at most $2n$ constructions of the first kind, since each of them uses at least one piece.

\smallskip\noindent\emph{Pairs of edges at $W$.} Fix $i$, and let $B_i$ be the graph of the edges of $H_3$ between $W$ and $A_i$. Each vertex of $A_i$ has degree at most $\lambda^3$ in $B_i$. At each $w\in W$, we pair its edges in $B_i$ arbitrarily, and each pair forms a cherry with center $w$. If $d_{B_i}(w)$ is odd, then one edge $w\,v(w,i)$ remains unpaired. The number of these cherries may be much larger than $n$, so we close many of them into one cycle. Lemma~\ref{lem:grouping} applied to $B_i$ with $A_i$ and $\lambda^3$ playing the roles of $A$ and $d$ partitions these cherries into at most $\lvert A_i\rvert+2\lambda^3$ groups. Let $u_p\,w_p\,u'_p$, $p\in[k]$, be the cherries of a group. They are pairwise vertex-disjoint, so together with $i_1=\dots=i_k=i$ they form a construction $C$ with $\lvert Y_C\rvert=3k\leq 3\lambda^3$. The number of constructions of the second kind is at most $\sum_i(\lvert A_i\rvert+2\lambda^3)\leq n+2\lambda^3j^*\leq 2n$, as the sets $A_i$ are disjoint and $j^*\leq 2n/s$.

\smallskip\noindent\emph{Unpaired edges.} Consider the bipartite graph $B_{\mathrm{odd}}$ with parts $W$ and $[j^*]$ in which $w$ and $i$ are adjacent exactly when $d_{B_i}(w)$ is odd. We associate the edge $wi$ with the edge $w\,v(w,i)$ of $H$. The graph $B_{\mathrm{odd}}$ is simple and has $n'=\lvert W\rvert+j^*\leq 4n/\lambda^2$ vertices, as $\lvert W\rvert\leq 3n/\lambda^2$ and $j^*\leq 2n/s$. We may assume that $B_{\mathrm{odd}}$ has an edge, so $n'\geq 2$. By Lemma~\ref{lem:medium-cycle} applied to $B_{\mathrm{odd}}$ with $\lambda$, the edges of $B_{\mathrm{odd}}$ can be partitioned into at most $e(B_{\mathrm{odd}})/\lambda\leq e(H)/\lambda\leq n$ cycles of lengths between $\lambda$ and $\lambda+2^{12}\ell^3$ and at most $8(\lambda+1)n'\log n'\leq 64n\ell/\lambda\leq n$ remaining edges. We record the edges of $H$ associated with the remaining edges as single edges, and we turn each of the cycles into a construction as follows.

Write such a cycle as $i_1w_1i_2w_2\cdots i_kw_ki_1$, where $k\geq 2$ and the vertices $w_p$ and the indices $i_p$ are distinct. The paths $x_p\,w_p\,y_p$ for $p\in[k]$, where $x_p=v(w_p,i_p)$ and $y_p=v(w_p,i_{p+1})$, form a construction $C$. Indeed, the vertices $w_p$ are distinct and lie in $W$, and the vertices $x_p$ and $y_p$ lie in $X_1$. As the indices $i_p$ are distinct, two of the vertices $x_p$ and $y_p$ lie in a common set $A_i$ only if they are $y_{p-1}$ and $x_p$ for some $p$. As the cycle of $B_{\mathrm{odd}}$ has length $2k$, we have $\lvert Y_C\rvert\leq 3k\leq\tfrac32(\lambda+2^{12}\ell^3)\leq 4\lambda^3$.

Distinct constructions use disjoint sets of edges of $H$. Within one construction, the demands have pairwise disjoint ends. Indeed, a vertex lies on two of the paths $P_1,\dots,P_k$ only if it is $y_{p-1}=x_p$ for some $p$, and there is no demand to join $y_{p-1}$ to $x_p$ in that case. We assign to each end $u$ of a demand the edge at $u$ of the path of the construction which ends at $u$. So distinct demands with a common end $u$ get distinct edges at $u$. Hence each vertex $u\in X_1$ is an end of at most $d_H(u)\leq\lambda^3$ demands of the constructions, and all of them are to be joined inside $J_{i(u)}$.

\medskip\noindent\textbf{Joining paths.} We apply Lemma~\ref{lem:routing} to the reservoirs $R_1,\dots,R_{j^*}$ with $n^{\alpha}$ playing the role of $s$, $L=\ell$ and $t=4\lambda^3$. For each construction $C$, we take the bundle $B$ with $Y_B=Y_C$ whose demands are the demands of $C$, where the demand to join $y_{p-1}$ to $x_p$ inside $J_{i_p}$ is regarded as a demand inside $R_{i_p}$. The reservoirs are edge-disjoint $n^{\alpha}$-expanders on at most $2^L$ vertices. As shown above, the demands of each construction $C$ have disjoint ends in $Y_C$, each vertex is an end of at most $\lambda^3$ demands, and $\lvert Y_C\rvert\leq 4\lambda^3$. As $r=2^8L^3$ and $\lambda$ are polylogarithmic in $n$, we have $n^{\alpha}\geq 24r\lambda^3=6rt$ for large $n$. So Lemma~\ref{lem:routing} gives joining paths for the demands of all constructions except at most $(16r\lambda^6/n^{\alpha})\sum_i m_i\leq 32r\lambda^6n^{1-\alpha}$ of them, which we say \emph{fail}. A construction $C$ has at most $\lvert Y_C\rvert$ edges, as the union of its paths is a cycle or a disjoint union of paths with vertex set $Y_C$. We record the edges of the failed constructions as single edges, at most $128r\lambda^9n^{1-\alpha}\leq n$ edges in total.

\medskip\noindent\textbf{The decomposition.} The joining paths use edges of the reservoirs, so they use no edge of $H$. By Lemma~\ref{lem:routing}\ref{rt:int}, the joining paths of a construction $C$ which does not fail have disjoint interiors which avoid $Y_C$, so $C$ gives a cycle. These cycles are edge-disjoint, since distinct constructions use disjoint sets of edges of $H$ and the joining paths are edge-disjoint.

Finally, for each $i$, let $R_i^*$ be the graph of the edges of $R_i$ which lie on no joining path. By Claim~\ref{cl:reservoirs} and Theorem~\ref{thm:cfs}\ref{cfs:dec}, $R_i^*$ can be decomposed into at most $b_\alpha m_i$ cycles and edges, where $b_\alpha=b(\alpha/4,8/\alpha)$.

Each edge of $G$ now lies in exactly one recorded cycle or edge. Indeed, each edge of $G-E(R)$ lies in a removed cycle or in $H$, and each edge of $R$ lies on a joining path or in a graph $R_i^*$. The edges of $H_1$ are decomposed by Theorem~\ref{thm:bm}\ref{bm:bulk}. Each edge of $H_2$ lies in a construction of the first kind or in a recorded piece. Each edge of $H_3$ lies in a construction of the second or third kind or is recorded as a single edge. The edges of a construction lie in the cycle given by it, or are recorded as single edges if it fails.

\smallskip\noindent\emph{Counting the parts.} There are at most $6n$ removed cycles and at most $n$ parts for $H_1$. We recorded at most $n$ edges of unused pieces, at most $n$ unpaired edges and at most $n$ edges of failed constructions. There are at most $2n$, $2n$ and $n$ constructions of the three kinds, and the graphs $R_i^*$ give at most $b_\alpha\sum_im_i\leq 2b_\alpha n$ parts. In total, $G$ is decomposed into at most $(15+2b_\alpha)n$ cycles and edges. This proves Theorem~\ref{thm:qoh} with $C_\eps=\max\{15+2b_{\eps/3},\,n_0/2\}$.

\section{Concluding remarks}\label{sec:conclusion}

We proved that every $n$-vertex graph can be decomposed into $O(n)$ cycles and edges. Our constant is large, and we did not try to optimize it. It would be interesting to determine the optimal constant, which is at least $3/2$ by the example of Erd\H{o}s~\cite{erdos83}. Haj\'os' conjecture and Gallai's conjecture on path decompositions remain open.

The proof of Theorem~\ref{thm:eg} reduces the problem to light graphs. This reduction uses only a bounded nondecreasing function $f$ with $f(d)-f(d-2)\geq d^{-3/2}$ for $d\geq 2$, and it might be useful for other decomposition and covering problems.

\phantomsection\addcontentsline{toc}{section}{Note added}
\section*{Note added}
After the first version of this paper was submitted to arXiv, and before it appeared, OpenAI announced an independent proof of Conjecture~\ref{conj:eg}~\cite{oai}. The two proofs follow different strategies. Their proof is an induction on the number of vertices. It splits the graph into pieces of large cut expansion of successively smaller sizes, identifies pairs of vertices in the pieces of one layer, applies induction to the resulting quotient graphs and lifts their cycles back to the original graph. Our proof consists of two separate statements, which may be of independent interest, and a short procedure which combines them. Theorem~\ref{thm:structure}, the main new ingredient, finds near-closed subgraphs of polynomial minimum degree in light graphs. Its proof is elementary and uses no expanders. Theorem~\ref{thm:qoh} extends the result of Conlon, Fox and Sudakov~\cite{cfs} from linear to polynomial minimum degree.

This version simplifies the first version. Heavy cycles are defined by degrees instead of vertex weights. This removes logarithmic factors from Sections~\ref{sec:prelim}--\ref{sec:structure}. In Section~\ref{sec:qoh}, a deterministic routing lemma (Lemma~\ref{lem:routing}) replaces the random coloring with $\Theta(\log^3 n)$ colors, and the edges of the expanders are no longer treated separately.

\phantomsection\addcontentsline{toc}{section}{Use of AI tools}
\section*{Use of AI tools}
The author made extensive use of ChatGPT (OpenAI) and Claude (Anthropic) in developing the arguments and preparing the text and figures of this paper. The author has verified all results and takes full responsibility for the content of the paper.

\phantomsection

\end{document}